\def\l@subsection{\@tocline{2}{0pt}{2.5pc}{5pc}{}}
\DeclareSymbolFont{largesymbol}{OMX}{yhex}{m}{n}
\DeclareMathAccent{\Widehat}{\mathord}{largesymbol}{"62}
\renewcommand{\div}{{\rm div \thinspace }}
\numberwithin{equation}{section}              
\newtheorem{theorem}{Theorem}[section]
\newtheorem{lemma}{Lemma}[section]
\newtheorem{proposition}{Proposition}[section]
\newtheorem*{proposition*}{Proposition}
\newtheorem*{corollary*}{Corollary}
\newtheorem{definition}{Definition}[section]
\newtheorem*{definitions*}{Definitions}
\newtheorem*{acknowledgements*}{Acknowledgements}
\newtheorem*{conjecture*}{\bf Conjecture}
\newtheorem*{example*}{\bf Example}
\theoremstyle{remark}
\newtheorem{remark}{\bf Remark}[section]
\newcommand{\Bv}{{\boldsymbol{v}}}
\begin{document}
	\date{}                                     

\author{Changfeng Gui}
\address{
Department of Mathematics, University of Macau, Taipa, Macau and
Zhuhai UM Science and Technology Research Institute, Hengqin, Guangdong, 519031, China}
\email{changfenggui@um.edu.mo}

	\author{Hao Liu}
	\address{Department of Mathematics, Faculty of Science and Technology, University of Macau, Taipa, Macao}
	\email{haoliu@um.edu.mo}
	\author{Chunjing Xie}
	\address{School of Mathematical Sciences, Institute of Natural Sciences,
		Ministry of Education Key Laboratory of Scientific and Engineering Computing,
		and CMA-Shanghai, Shanghai Jiao Tong University, 800 Dongchuan Road, Shanghai, China}
	\email{cjxie@sjtu.edu.cn}
	
	\title[Forward self-similar solutions to 2D Navier-Stokes equations]{On  the forward self-similar solutions to the two-dimensional Navier-Stokes equations}
	
	\keywords{Navier-Stokes equations,  Self-similar solutions,  Large initial data, two-dimensional}
	\subjclass[2020]{35Q30, 35C06, 35B40, 76D05}

	\begin{abstract}

		We establish the global existence of forward self-similar solutions to the two-dimensional incompressible Navier-Stokes equations for any divergence-free initial velocity that is homogeneous of degree $-1$ and locally H\"older continuous. This result requires no smallness assumption on the initial data.
		In sharp contrast to the three-dimensional case, where $(-1)$-homogeneous vector fields are locally square-integrable, the major difficulty for the 2D problem is the criticality in the sense that the initial kinetic energy is locally infinite at the origin, and the initial vorticity fails to be locally integrable, so that the classical local energy estimates are not available.
		Our key ideas are to  decompose the solution into a linear part solving the heat equation and a finite-energy perturbation part, and to exploit  a kind of inherent cancellation relation between the linear part and the perturbation part. These, together with suitable choices of multipliers, enable us to control  the interaction terms and to establish the $H^1$-estimates for the perturbation part. 
		Furthermore,  we can get an optimal pointwise estimate via investigating the corresponding Leray equations in  weighted Sobolev spaces.
		This gives the faster decay of the perturbation part at infinity and compactness, which play important roles in proving
		 the existence of global-in-time self-similar solutions.
	\end{abstract}
    
    \maketitle
    
	\section{Introduction and Main Results}

	We consider the two-dimensional Navier-Stokes equations
	\begin{equation}\label{eq:NS}
		\left\{
		\begin{aligned}
			&\frac{\partial  {u}}{\partial t} -\Delta {u} +  {u}\cdot \nabla  {u} +\nabla \pi =0,\\
			&\div   {u} = 0, \\
		\end{aligned}
		\right.  \textrm{ in } \mathbb{R}^2\times (0,\infty)
	\end{equation}
	with initial condition 
	\begin{equation}\label{eq:initial}
		u(x,0)=u_0(x).
	\end{equation}
	Here $ u(x,t): \mathbb{R}^2\times[0,+\infty)\to \mathbb{R}^2$ is the velocity vector field and the scalar $\pi (x,t):\mathbb{R}^2\times[0,+\infty)\to \mathbb{R}$ is the pressure. 
	The  problem \eqref{eq:NS} is invariant under the scaling
	\begin{equation*}
		\begin{aligned}
			&u (x,t) \to u_{\lambda} (x,t) = \lambda u (\lambda x,\lambda^2 t),\\
			&\pi (x,t) \to \pi _{\lambda}(x,t)= \lambda^2 \pi (\lambda x,\lambda^2 t).\\
			& u_0(x) \to u_{0,\lambda}(x) =\lambda u_0(\lambda x).
		\end{aligned}
	\end{equation*}
	In particular, the solution $u$  is called  self-similar or  scale-invariant  if
	$u_{\lambda}= u$  and $\pi _{\lambda}=\pi $ for any $\lambda > 0$.
	Similarly, we say that the  initial condition $u_0$ is self-similar or scale-invariant,
	if $u_0$ satisfies $\lambda u_0(\lambda x)= u_0(x)$ any for $\lambda > 0$.
	
	The study of self-similar solutions is motivated by their fundamental roles in understanding the well-posedness, especially the structure of singularities, the asymptotic behavior, and the uniqueness or non-uniqueness of the Navier-Stokes equations. In the context of the regularity problem, (backward) self-similar solutions often serve as natural candidates for blow-up profiles, and have been excluded in the 3D case (\cite{Sverak96, Tsai98}). On the other hand,
    forward self-similar solutions, which arise from scale-invariant initial data, 
    provide a critical setting to test the robustness of the well-posedness theory. Recently, a lot of important progress has been made on the non-uniqueness of Leray-Hopf solutions through the study of the forward 
    self-similar solutions
     \cite{Jia15, Albritton22annals, Guillod23, Thomas25}.

The existence of solutions  in a scale-invariant space has been studied extensively, including $L^{n}$, $L^{n,\infty}$, Besov spaces and $BMO^{-1}$ in $n$ dimensions (usually $n=2,3$), typically under \emph{smallness assumptions}, see \cite{Barraza96, Cannone96, Giga89, Koch01}. Especially, the small data existence of forward self-similar solutions follows from the uniqueness results in these studies.
	One may also refer to \cite[Chapter 8]{Tsai18} and the reference therein. 

In the three-dimensional case,	
	recently, the seminal work of Jia and {\v{S}}ver{\'{a}}k \cite{Jia14} established the global existence of smooth self-similar solutions for the 3D Navier-Stokes equations with any Hölder continuous self-similar initial data on $\mathbb{R}^{3} \setminus \{0\}$. The main techniques of their approach are to derive  the local-in-space
	H\"older estimates based on the $L_{loc}^{2}$-energy estimates established
	by Lemari\'{e}-Rieusset for the local Leray solutions in \cite{Lemarie02}.  Later in 
	\cite{Korobkov2016}, the self-similar solutions  were constructed
	based on the a priori $H^1$-estimate obtained for  the equivalent Leray equations
	by Leray’s method of contradiction, which is a kind of blow-up argument. In
	\cite{Tsai17, Bradshaw17}, Bradshaw and Tsai
	gave a rather direct method to establish the $H^1$-estimates for both discretely self-similar solutions and rotated discretely self-similar solutions, and then constructed   solutions  via the  Galerkin approximation.

However, the general forward self-similar solutions for two-dimensional Navier-Stokes have neither locally finite energy nor integrable vorticity. This makes the analysis much more challenging.
In the two-dimensional case,			note that the (-1)-homogeneous divergence-free velocity field  $u_0(x)$ has constant circulation on any circle, and be decomposed  as 
	\begin{equation}\label{IC1}
		u_0(x) = \frac{\alpha}{2\pi} \frac{x^\perp}{|x|^2}   +\tilde{u}_0,
	\end{equation}
	where $\tilde{u}_0$ has zero circulation.
In the case $\tilde{u}_0=0$, an explicit class of self-similar solutions is known as Oseen vortices,  whose velocity and vorticity are given by 
	\begin{equation*}
		u(x,t) = \frac{\alpha}{2\pi}  \frac{x^\perp}{|x|^2}(1-e^{-\frac{|x|^2}{4t}}) \,\,\text{ and }\,\, ~\omega (x,t) = \frac{\alpha}{4\pi t} e^{-\frac{|x|^2}{4t}},
	\end{equation*}
    respectively.
	In fact, the  
	Oseen vortices characterize the long-time asymptotic behavior of general solutions to \eqref{eq:NS} with integrable initial vorticity \cite{Gallay05, Giga88, Carpio94}. Therefore, the forward self-similar solutions also play crucial roles in understanding the long-time behavior of general solutions of  \eqref{eq:NS}. 
	However, there is no known existence result of self-similar solutions with the intial data of the form \eqref{IC1} with large $\tilde{u}_0$, even for the special case $\alpha=0$.

    \subsection{Main results}
	Our main goal is to construct a global forward self-similar solution for any self-similar initial velocity field, generally with non-integrable vorticity. Before stating our result, we give the definition of the solution to \eqref{eq:NS}, which is essentially the same as \cite[Definition 8.18]{Tsai18}. 
	\begin{definition}\label{def:energy persol}
		A vector field $u$ defined on $\mathbb{R}^2\times[0,\infty)$ is called an energy perturbed solution with divergence-free initial data $u_0\in L^{2,\infty}(\mathbb{R}^2) $ 
		if it satisfies  \eqref{eq:NS} in the sense of distributions, and \begin{itemize}
			\item[(i)]  $u- e^{t\Delta}u_0 \in L^{\infty}(0,T;L^2(\mathbb{R}^2))\cap L^{2}(0,T;H^1(\mathbb{R}^2))$ for any $T>0$,
			\item[(ii)] $\lim_{t\to 0} \|u(t)- e^{t\Delta}u_0 \|_{L^2(\mathbb{R}^2)} = 0$,
		\end{itemize}
		where \begin{equation*}
			(e^{t\Delta}u_0) (x) = \int_{\mathbb{R}^2} \frac{1}{4\pi t}e^{-\frac{|x-z|^2}{4 t}}u_0(z)dz.
		\end{equation*}
		is the solution to the heat equation with the same initial data $u_0$.
	\end{definition}
	
	The main results in this paper are as follows.
	\begin{theorem}\label{thm:main}
		Assume $u_0(x)\in C^{0,\beta}_{loc}$ for some $0 < \beta\leq 1$ is a divergence-free, self-similar  initial data  in $\mathbb{R}^2\setminus\{0\}$, and that
		$  \int_{ S^1} u_0\cdot n d\sigma = 0 $.
        Here, $S^1$ is the unit circle on $\mathbb{R}^2$ and  $C^{0,\beta}_{loc}$ is the standard local H\"older space, when $\beta=1$, $C^{0,1}_{loc}$ is the local Lipshitz space. 
        Suppose that
		\begin{equation*}
			A=\|u_0\|_{C^{0,\beta}(S^1)}<+\infty. 
		\end{equation*}
		Then the following statements hold.
        \begin{itemize}
            \item[(i)]  
		There exists at least one global  smooth self-similar solution $u (x,t)\in C^{\infty}(\mathbb{R}^2\times (0,\infty)) $ to the Cauchy problem \eqref{eq:NS}-\eqref{eq:initial}, 
  which is indeed an energy perturbed solution in the sense of Definition \ref{def:energy persol}. 
        \item[(ii)]  
		The following pointwise estimates hold
		\begin{equation}\label{eq:pointwisedecay}
			|u(x,t)|\leq \frac{C(A,\beta)}{|x|+\sqrt{t}},\quad  ~|u(x,t)- e^{t\Delta}u_0|\le C(A,\beta)\frac{\sqrt{t}}{|x|^2+t},
		\end{equation}
		and 
		\begin{equation*}
			|\nabla u(x,t)- \nabla e^{t\Delta}u_0|\le C(A,\beta)\frac{\sqrt{t}^{\beta}}{(|x|+\sqrt{t})^{2+\beta}}.
		\end{equation*}
        \item[(iii)] 
		If, in addition, the initial data $u_0$ belongs to $C^2(S^1)$,  then we have the following optimal decay rate estimates
		\begin{equation}\label{eq:optimaldecay}
			| u(x,t)- e^{t\Delta}u_0| \le C(\|u_0\|_{C^2(S^1)})\frac{t}{(|x|+\sqrt{t})^3}\ln\left(1+\frac{|x|}{\sqrt{t}}\right).
		\end{equation}
         \end{itemize}
	\end{theorem}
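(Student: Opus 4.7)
The plan is to construct the self-similar solution by reducing to a stationary Leray-type equation for its profile. Writing $u(x,t) = (1/\sqrt{t})\,U(x/\sqrt{t})$ and decomposing $U = U_L + V$ with $U_L(y) = (e^{\Delta}u_0)(y)$ the profile of the heat extension of $u_0$, the perturbation $V$ satisfies
\begin{equation*}
-\Delta V - \tfrac{1}{2}V - \tfrac{1}{2}(y\cdot\nabla)V + (U_L+V)\cdot\nabla V + V\cdot\nabla U_L + \nabla P = -U_L\cdot\nabla U_L,
\end{equation*}
together with $\div V = 0$ on $\mathbb{R}^2$ and $V\to 0$ at infinity. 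The target class for $V$ is $H^1(\mathbb{R}^2)$, which is exactly the finite-energy perturbation allowed by Definition 1.1. Existence of $V$ is then sought via a Galerkin/approximation scheme: smoothly truncate $u_0$ to $u_0^{\varepsilon}$ supported in an annulus $\{\varepsilon<|x|<1/\varepsilon\}$ while preserving the divergence-free and zero-flux conditions, solve the corresponding approximate Leray equation, and pass to the limit $\varepsilon\to 0$.

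The main obstacle is obtaining $\varepsilon$-uniform $H^1$-bounds on $V^{\varepsilon}$. Testing the equation against $V$, the transport pieces $(U_L+V)\cdot\nabla V\cdot V$ vanish by $\div U_L = \div V = 0$ and the scaling terms cancel by integration by parts, leaving two dangerous contributions
\begin{equation*}
I_1 = \int_{\mathbb{R}^2} (V\cdot\nabla U_L)\cdot V\,dy,\qquad I_2 = \int_{\mathbb{R}^2} (U_L\cdot\nabla U_L)\cdot V\,dy.
\end{equation*}
The difficulty is that $U_L\notin L^2(\mathbb{R}^2)$ (it has only logarithmic $L^2$ divergence) and $\nabla U_L$ decays merely like $(1+|y|)^{-2}$, so naive energy estimates fail. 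Here I would exploit the inherent cancellation highlighted in the abstract: the $(-1)$-homogeneity of $u_0$ combined with the zero-flux hypothesis $\int_{S^1} u_0\cdot n\,d\sigma = 0$ kills the radial monopole and yields the sharp bounds $|U_L(y)|\lesssim(1+|y|)^{-1}$ and $|\nabla U_L(y)|\lesssim(1+|y|)^{-2}$. Coupled with a two-dimensional Hardy-type inequality with weight $|y|^{-2}$, this lets $I_1$ be absorbed into the dissipation $\|\nabla V\|_{L^2}^2$. For $I_2$ an integration by parts converts it to $\int(U_L\otimes U_L):\nabla V\,dy$; the quartic decay $|U_L|^2\lesssim (1+|y|)^{-2}$ places $U_L\otimes U_L$ in $L^2(\mathbb{R}^2)$ in two dimensions, so Cauchy--Schwarz and Young close the estimate. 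Introducing suitable weighted multipliers then upgrades the bound to the uniform $H^1$-estimate.

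With this uniform estimate, I would extract a weakly convergent subsequence $V^{\varepsilon}\rightharpoonup V$. The delicate point is strong convergence in the nonlinear terms, which requires compactness at infinity; this is provided by the same weighted estimates that control the tails. Once the limit $V\in H^1$ is shown to solve the Leray equation, elliptic bootstrap upgrades the regularity to $C^{\infty}$ and yields the smooth self-similar solution of part (i).

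For the pointwise decay in part (ii), I would view the Leray equation as a stationary Stokes system $-\Delta V + \nabla P = F[V, U_L]$ with known source and iterate in weighted Sobolev spaces $H^k$ with weights $\langle y\rangle^{\alpha}$. Gradually increasing $\alpha$ (bootstrap in the weight) upgrades the $H^1$-control to $|V(y)|\lesssim \langle y\rangle^{-2}$, which in original variables is exactly the second estimate in \eqref{eq:pointwisedecay}; combining with the explicit heat-extension bound on $e^{t\Delta}u_0$ gives the first. Differentiating the equation and repeating the procedure produces the gradient bound. Under the stronger hypothesis $u_0\in C^2(S^1)$ in part (iii), a two-term multipole expansion of $U_L$ captures both the $|y|^{-1}$ and $|y|^{-2}$ contributions; the latter sits precisely at the threshold where the two-dimensional Stokes inverse acquires a logarithmic factor, producing the $\ln(1+|x|/\sqrt{t})$ correction in \eqref{eq:optimaldecay}.
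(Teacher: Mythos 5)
Your overall architecture (decompose $u = e^{t\Delta}u_0 + $ remainder, reduce to the perturbed Leray equations for the profile $V$, get a priori $H^1$ bounds, then weighted estimates and pointwise decay) matches the paper, but the step on which everything rests --- the $\varepsilon$-uniform $H^1$ estimate --- does not close as you describe. Testing with $V$ itself, the drift terms $-\tfrac12 V - \tfrac12 y\cdot\nabla V$ cancel \emph{exactly} in two dimensions (unlike in 3D, where they leave $\tfrac14\int|V|^2$), so the energy identity gives you only $\|\nabla V\|_{L^2}^2$ on the good side and no control whatsoever on $\|V\|_{L^2}$. You propose to absorb $I_1=\int (V\cdot\nabla U_L)\cdot V$ into the dissipation via ``a two-dimensional Hardy-type inequality with weight $|y|^{-2}$'', but that inequality, $\int |V|^2/|y|^2 \lesssim \int|\nabla V|^2$, is precisely the one that fails in dimension two (the sharp constant $(2/(n-2))^2$ blows up at $n=2$; $\dot H^1(\mathbb{R}^2)$ functions may grow logarithmically). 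Even granting the pointwise bound $|\nabla U_L|\lesssim (1+|y|)^{-2}$ --- which itself only holds for $\beta=1$; for general $\beta\in(0,1)$ the decay is $(1+|y|)^{-1-\beta}$ --- there is no way to dominate $\int |V|^2/(1+|y|)^{2}$ by $\|\nabla V\|_{L^2}^2$, and no smallness is available to absorb it even if one could. So the uniform bound on $V^{\varepsilon}$, and with it the compactness, the limit passage, and membership of the limit in the energy class of Definition 1.1, all remain unproved. (Your treatment of $I_2$ is fine: $U_L\otimes U_L\in L^2(\mathbb{R}^2)$ does follow from $|U_L|\lesssim(1+|y|)^{-1}$.)

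This is exactly the criticality the paper is built around, and it is resolved there by two devices you do not have. First, a cancellation identity obtained by testing the $V$-equation with $U_L+V$ \emph{and} the linear equation for $U_L$ with $V$, which yields $\int|\nabla V|^2 + 2\int\nabla U_L\cdot\nabla V = 0$ and hence $\|\nabla V\|_{L^2}\le 2\|\nabla U_L\|_{L^2}$ with no need to touch $I_1$ at that stage. Second, to recover an $L^2$ (in fact $L^p$) bound on $V$, the paper uses the multiplier $|V|^{p-2}V$ with $1<p<2$, for which the drift contributes the strictly positive term $(\tfrac1p-\tfrac12)\int|V|^p$, together with a cut-off modification $v_1=\eta_{R_0}U_L+w$ of the linear profile making $\|v_1\|_{L^\infty}+\|\nabla v_1\|_{L^\infty}$ small, so that the analogue of $I_1$ and the pressure term can be absorbed into that positive term; Gagliardo--Nirenberg then gives the $L^2$ bound from the $L^p$ and Dirichlet bounds. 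Without ingredients playing these roles, your scheme stalls at the first energy estimate. A secondary remark: the paper passes from the decay $\langle y\rangle^{-1}$ (obtained from weighted $H^2$ bounds) to $\langle y\rangle^{-2}$ not by a pure weight bootstrap but through the Duhamel representation with the Oseen kernel, and it obtains existence by a Leray--Schauder degree argument in a weighted $C^1$ space rather than by Galerkin on truncated data; these choices could in principle be varied, but only once the core energy estimate is in place.
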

	
	There are a few remarks in order. 
    \begin{remark}\label{rem:deltaatorigin}
It is interesting to note that   any divergence-free  self-similar vector field $u_0\in L^{\infty}_{loc}(\mathbb{R}^3\setminus\{0\})$  automatically satisfies $\div  u_0=0$ across the origin.   However, this property does not hold for a two-dimensional vector field. For example,
\begin{equation*}
    u_0(x) = \frac{1}{2\pi}\frac{x}{|x|^2}
\end{equation*}
is divergence-free on $\mathbb{R}^2\setminus\{0\}$ but $\div u_0=\delta$ in the sense of distributions on $\mathbb{R}^2$.
    Since   the Navier-Stokes equations ask the velocity field to be divergence-free for $t>0$, it is natural to require the initial data $u_0$ to be 
    divergence-free in $\mathbb{R}^2$ at least in the sense of distributions, i.e., $\int_{S^1} u_0\cdot n d\sigma =0$.

	\end{remark}

    \begin{remark}
        Together with the techniques developed in \cite{Tsai2014, Xue15, Chae18AIHP, Bradshaw19apde, Bradshaw18arma, Albritton19arma}, where large self-similar solutions were constructed  with initial data that  lie in $L^2_{loc}(\mathbb{R}^3)$ and Besov space, etc,
        we may reduce the regularity assumptions in Theorem \ref{thm:main} to construct two-dimensional forward self-similar solutions with general initial data.
    \end{remark}

\begin{remark}
The leading term $e^{t\Delta}u_0$ satisfies 
    \begin{equation*}
        \left|e^{t\Delta}u_0(x)\right| \leq \frac{C(A)}{|x|+\sqrt{t}}.
    \end{equation*}
    The details are referred to Lemma \ref{lem:lineardecay}.
\end{remark}

    \begin{remark}
		The decay rate \eqref{eq:optimaldecay} is optimal and cannot be improved in general, even if $u_0$ has higher regularity. One can see  the precise  asymptotic expansions    in \cite{Brandolese09} when the initial data is small.  The remainder term $\frac{t}{(|x|+\sqrt{t})^3}\ln\left(1+\frac{|x|}{\sqrt{t}}\right)$ can not vanish unless the orthogonality relations 
		\begin{equation*}
			\int_{S^1} u_{0,1}^2=	\int_{S^1} u_{0,2}^2 ~\textrm{ and }~\int_{S^1} u_{0,1} u_{0,2}=0
		\end{equation*}
		are satisfied.
	\end{remark}

    \begin{remark}
 In recent years,   the  self-similar solutions of the steady Navier-Stokes equations  were also studied extensively, both on the rigidity part and the existence part. For example,  nontrivial self-similar solutions were obtained in  \cite{Hamel17, Jeffery01041915, Landau44}:
Hamel solutions when the dimension $n=2$ and Landau solutions when $n=3$. The rigidity of Hamel solutions and Landau solutions in the class of  self-similar solutions was established  in \cite{Tian98, Sverak11}.
    The importance of these self-similar solutions  in understanding singularity and far-field behavior of solutions to the steady Navier-Stokes equations was illustrated in \cite{MiuraTsai12, KorolevSverak11} and references therein. There is also a great interest in studying the high-dimensional steady Navier-Stokes equations, and 
    it is known that for  dimensions $n\geq 4$, all self-similar solutions are trivial (see \cite{Tsai98b}, \cite[Corollary in Section 2]{Sverak11}).
This rigidity result has been extended to all solutions with scale-invariant bounds $C/|x|$ without any smallness assumption on $C$ in \cite{bang2023rigidity}.
The existence of self-similar solutions to steady Navier-Stokes equations with small scale-invariant forces has been obtained in \cite{Kozono95, Phan13, Kaneko19, Tsurumi19} for  $n\geq 3$. Recently, the existence of self-similar solutions with arbitrarily large scale-invariant forces has been proved in \cite{Hao25} up to dimension 16.
\end{remark}

\begin{remark}
For the constructions of self-similar solutions for the fractional Navier-Stokes equations and other related models, such as MHD and SQG equation, one can refer to \cite{Lai19,  Lai24, Zhang19jmp, Yang26, Albritton22}. It is also possible to study forward self-similar solutions for the associated 2D system with the aid of the methods developed in this paper.
\end{remark}

    \begin{remark}
        We recently got to know that Dallas Albritton, Julien Guillod, Mikhail Korobkov, and Xiao Ren (\cite{AGKR}) obtained a result similar to Theorem \ref{thm:main}, where  $u(x,t)- e^{t\Delta}u_0$ at $t=1$ admits spatial decay $|x|^{-1-\beta}$ if the initial data $ u_0$ belongs to $ C^{0,\beta}(S^1)$. After both papers appeared on ArXiv,  we noticed that the methods are quite different. We use straightforward energy estimates and do not use special quantities like the Bernoulli pressure.
    \end{remark}

	\subsection{Main difficulties and the key ideas}
	The main difficulty in proving Theorem \ref{thm:main}, compared to the 3D case, lies in the non-integrability of the initial data, even locally, at the level of both velocity and vorticity in $\mathbb{R}^2$. In 2D, a self-similar initial velocity  $|u_0(x)| \sim |x|^{-1}$ does not belong to $L_{loc}^{2}(\mathbb{R}^{2})$, 
	hence, the local Leray solutions developed  in \cite{Lemarie02} are not applicable.
	On the other hand,  the initial vorticity does not belong to $L_{loc}^{1}(\mathbb{R}^{2})$ and even cannot be viewed as a finite measure as $|\textrm{curl} ~u_0(x)| \sim |x|^{-2}$. Consequently, we lack a fundamental quantity to initiate the estimates at both the velocity and the vorticity level.
	
	To overcome this lack of initial integrability, we adopt a decomposition strategy. We postulate that the singular behavior of the solution is dominated by the linear evolution. Let $ e^{t\Delta}u_0$ be the caloric lift of the initial data, i.e., the solution to the heat equation starting from the same initial data $u_0$. While $ e^{t\Delta}u_0$ retains the non-$L^2$ character of the initial data, it is explicitly computable and smooth for $t > 0$. The unknown remainder term $u_{\text{re}} = u -  e^{t\Delta}u_0$ is then expected to possess better  decay properties. 
     The core of our proof lies in establishing global-in-time energy estimates for $u_{\text{re}}$, depending only on initial data.

 Due to the self-similarity, we only need to focus on the energy estimates at $t=1$.
   To do so, we use the self-similar variables and substitute this decomposition into the Navier-Stokes equations, thereby deriving the perturbed Leray equations. 
More precisely,  we let
\begin{equation*}
		u(x,t)=\frac{1}{\sqrt{t}}v\left(\frac{x}{\sqrt{t}}\right)=\frac{1}{\sqrt{t}}v(y).
	\end{equation*}
    and
    \begin{equation*}
		v_0(y)= e^{\Delta}u_0(y), ~ v_{\text{re}}=v-v_0.
	\end{equation*}
We can derive that (see Section \ref{sec:Lerayeq})
    \begin{equation*}
		\left\{
		\begin{aligned}
			&-\Delta v_{\text{re}} -\frac{1}{2}v_{\text{re}} -\frac{1}{2}y\cdot \nabla v_{\text{re}} +  v_0\cdot \nabla v_{\text{re}} + v_{\text{re}}\cdot \nabla  v_0 + v_{\text{re}}\cdot \nabla v_{\text{re}} +\nabla q=- v_0\cdot\nabla  v_0, \\
			&  \div v_{\text{re}}=0,
		\end{aligned}
		\right. \textrm{ in }\mathbb{R}^2.
	\end{equation*}   
  The energy estimates of $v_{\text{re}}$ cannot be achieved directly if one tests the equation of $v_{\textrm{re}}$ above by $v_{\text{re}}$. This is a key difference compared with the 
three-dimensional case,  where there is an additional term $\frac{1}{4}\int_{\mathbb{R}^2} |v_{\text{re}}|^2$ 
	 helps to close the energy estimates; see \cite{Tsai17, Bradshaw17}.
The difficulty at this stage is how to derive some initial, although maybe insufficient, estimates for $v_{\text{re}}$.

	Our first key observation  is that one can  control the Dirichlet energy $\|\nabla v_{\text{re}}\|_{L^2(\mathbb{R}^2)}^2$ of $v_{\text{re}}$  by using some inherent \emph{cancellation relation} between $ v_0$ and $v_{\text{re}}$. %
    Indeed, this relation leads to a
 crucial identity (see also \eqref{eq:crucialidentity} below), specific to 2D, which is
    \begin{equation*}
		\int_{\mathbb{R}^2} |\nabla v_{\text{re}}|^2 dy + 2\int_{\mathbb{R}^2} \nabla  v_0\cdot \nabla v_{\text{re}} dy =0.
	\end{equation*}
      And one sees that   $\|\nabla v_{\text{re}}\|_{L^2(\mathbb{R}^2)}$ can be controlled by $\|\nabla v_0\|_{L^2(\mathbb{R}^2)}$, which is finite by using explicit estimates for  $ e^{\Delta}u_0$. This Dirichlet energy estimate $\|\nabla v_{\text{re}}\|_{L^2(\mathbb{R}^2)}$ for $ v_{\text{re}}$ is the starting point of our subsequent analysis.

      Our next  important observation is that the convection term $-\frac{1}{2}y\cdot \nabla v_{\text{re}}$ can be more effectively used if we choose the multiplier $|v_{\text{re}}|^{p-2}v_{\text{re}}$, 
where $1<p<2$. 
Testing the equation of $v_{\textrm{re}}$ above, the linear  term $-\frac{1}{2}v_{\text{re}} -\frac{1}{2}y\cdot \nabla v_{\text{re}}$
gives a positive quantity
\begin{equation*}
 \int_{\mathbb{R}^2} (-\frac{1}{2}v_{\text{re}} -\frac{1}{2}y\cdot \nabla v_{\text{re}})\cdot |v_{\text{re}}|^{p-2}v_{\text{re}} =
 \int_{\mathbb{R}^2}\left(\frac{1}{p}-\frac{1}{2}\right) |v_{\text{re}}|^{p}dy. 
\end{equation*}
This quantity, together with the estimate of $\|\nabla v_{\text{re}}\|_{L^2(\mathbb{R}^2)}$ is
 essential  for us to close the $L^p$-estimate of $v_{\text{re}}$ for any $1<p<2$. This also implies that $v_{\textrm{re}}$  decays like  $\frac{1}{1+|y|^2}$  in an average sense.  The application of  interpolation inequalities  leads to the $L^2$-estimate, and hence,  $H^1$-estimate of $v_{\text{re}}$. One can refer to Section \ref{sec:outline} for a more detailed outline of the proof.

	Once we get the $H^1$-estimate for $v_{\text{re}}$, we proceed to establish the weighted energy estimates and to derive  pointwise estimates for $v_{\text{re}}$ inspired by the methods developed in  \cite{Lai19}. Specifically, we choose the test function $|y|^2 v_{\text{re}}$, and the linear  term $-\frac{1}{2}v_{\text{re}} -\frac{1}{2}y\cdot \nabla v_{\text{re}}$
    contributes a crucial positive term  
    \begin{equation*}
				 \int_{\mathbb{R}^2} \left( -\frac{1}{2} v_{\text{re}} - \frac{1}{2} y \cdot \nabla v_{\text{re}} \right) \cdot (|y|^2 v_{\text{re}}) \, dy =\frac{1}{2} \int_{\mathbb{R}^2} |y|^2 |v_{\text{re}}|^2 \, dy.
			\end{equation*}
            This integral, due to the convection $- \frac{1}{2} y \cdot  \nabla v_{\text{re}}$, together with $H^1$-estimate of $v_{\text{re}}$  leads to 
            $H^1$-estimate of $|y| v_{\text{re}}$. After bootstrapping to $H^2$-estimate of $|y| v_{\text{re}}$, 
    we have the following pointwise estimate of $v_{\text{re}}$ by Sobolev embedding, 
	\begin{equation*}
		|v_{\text{re}}(y)| \leq \frac{C}{	1 + |y| } \textrm{ for all } y \in \mathbb{R}^2.
	\end{equation*}
	With this decay estimate in hand, we can further improve the decay estimates by  using the linear theory of Stokes and the heat equation to have that
	\begin{equation*}
		|v_{\text{re}} (y)| \leq \frac{C}{	1 + |y|^2 } \textrm{ for all } y \in \mathbb{R}^2.
	\end{equation*}
	This faster decay compared with the linear part $e^{\Delta}u_0$, together with standard interior regularity, gives compactness, so that we  can employ the Leray-Schauder degree theory to find a solution $v_{\textrm{re}}$, hence, a self-similar solution to the Navier-Stokes equation \eqref{eq:NS}. On the other hand, if $u_0\in C^2_{loc}(\mathbb{R}^2)$, we can utilize the computations in \cite{Brandolese09} to show the optimal decay rate \eqref{eq:optimaldecay}.

	The rest of the paper is organized as follows. We prove the energy estimates for the remainder term $u_{\text{re}} (\cdot,1)= u(\cdot,1) -  e^{\Delta}u_0$ in Section \ref{sec:energy estimates}. Next, we establish the weighted energy estimates and derive pointwise decay estimates for $u_{\text{re}} (\cdot,1)$  in Section \ref{sec:pointwise}. Finally,  the main theorem is proved in Section \ref{sec:mainthm} by using the Leray-Schauder degree theory.

	\section{The  Leray equations and the Energy estimates}\label{sec:energy estimates}
	The primary objective of this section is to establish the $H^1(\mathbb{R}^2)$-energy estimates for the remainder term defined by
	\begin{equation*}
		u_{\text{re}}(x,1)= \left.u - e^{t\Delta}u_0\right.|_{t=1}= u(x,1)- e^{\Delta}u_0(x).
	\end{equation*}  
	Furthermore, we demonstrate that $	u_{\text{re}}(x,1)$ is $L^{p}$-integrable for any $p \in (1, +\infty)$. This implies that $u_{\textrm{re}}$  decays like $\frac{1}{1+|x|^2}$ in an average sense.
	The local regularity of $u_{\text{re}}(x,1)$ and $e^{\Delta}u_0$ is not a big issue, indeed they are smooth due to the parabolic smoothing effect, once we have some initial local integrability. However, they decay slowly at spatial infinity, and the key point is to get the global integrability to initiate the subsequent analysis. Before stating the main result in this section, we introduce self-similar variables and the Leray equations.

	\subsection{Self-similar variables and the Leray equations}\label{sec:Lerayeq}
	Our analysis will be conducted in self-similar variables; we let
	\begin{equation}\label{eq:changeofvaribales}
		y=\frac{x}{\sqrt{t}} ~\textrm{and}~ s=\log (t).
	\end{equation}
	We then define   $v(y,s)$  by
	\begin{equation}\label{eq:changeofvareq}
		u(x,t)=\frac{1}{\sqrt{t}}v(y,s) = \frac{1}{\sqrt{t}}v\left(\frac{x}{\sqrt{t}},\log t\right).
	\end{equation}
	It follows that if $u$ solves \eqref{eq:NS}, then $v$ satisfies the time-dependent \emph{Leray equations}:
	\begin{equation}\label{eq:Leray}
		\left\{
		\begin{aligned}
			&\frac{\partial v}{\partial s}-\Delta v -\frac{1}{2}v -\frac{1}{2}y\cdot \nabla v+ v\cdot \nabla v +\nabla q=0,\\
			&\textrm{ div } v=0,
		\end{aligned}
		\right. \textrm{ in } \mathbb{R}^2\times (-\infty,\infty).
	\end{equation}
	The self-similarity of $u$ implies that $v$ is independent of $s$, and in this case
	$v(y)=u(y,1)$. We have the following stationary \emph{Leray equations} for $v$
	\begin{equation}\label{eq:NSinself-similar}
		\left\{
		\begin{aligned}
			&-\Delta v -\frac{1}{2}v -\frac{1}{2}y\cdot \nabla v+ v\cdot \nabla v +\nabla q=0,\\
			&\textrm{ div } v=0,
		\end{aligned}
		\right. \textrm{ in } \mathbb{R}^2.
	\end{equation}
	As is expected $ v\sim \frac{C}{1+|y|}$, hence, $ v\notin L^2(\mathbb{R}^2)$. 
	To circumvent this, we subtract the linear part from $v$ using the following decomposition.
	Let $ e^{t\Delta}u_0$ be the solution to the heat equation with initial data $u_0$, that  is 
	\begin{equation*}
		e^{t\Delta}u_0(x) =\int_{\mathbb{R}^2} \frac{1}{4\pi t}e^{-\frac{|x-z|^2}{4 t}}u_0(z)dz.
	\end{equation*}
	Direct calculation shows that $e^{t\Delta}u_0$ inherits the self-similarity of $u_0$, i.e.,
	\begin{equation*}
		\begin{aligned}
			\lambda  (e^{\lambda^2 t\Delta}u_0) (\lambda x) & = \lambda\int_{\mathbb{R}^2} \frac{1}{4\pi \lambda^2 t}e^{-\frac{|\lambda x- z|^2}{4 \lambda^2 t}}u_0(z)dz\\
			&=\lambda\int_{\mathbb{R}^2} \frac{1}{4\pi \lambda^2 t}e^{-\frac{|\lambda x- \lambda z|^2}{4 \lambda^2 t}}u_0(\lambda z)\lambda^{2} dz\\
			&=\int_{\mathbb{R}^2} \frac{1}{4\pi  t}e^{-\frac{| x-  z|^2}{4 t}}u_0( z) dz= e^{t\Delta}u_0(x),
		\end{aligned}
	\end{equation*}
	where in the last line we have used that $u_0$ is self-similar.
	We define $ v_0(y)$ by
	\begin{equation*}
		\frac{1}{\sqrt{t}}v_0(y)=  (e^{t\Delta}u_0)(x).
	\end{equation*}
Obviously we have
	\begin{equation}\label{eq:defofv1}
		v_0(y) = (e^{\Delta}u_0)\left(\frac{x}{\sqrt{t}}\right) =  (e^{\Delta}u_0)(y).
	\end{equation}
	Let $	v_{\text{re}}$ be the remainder part of $v$ when subtracting the linear part $v_0$, that is
	\begin{equation}\label{eq:deforre}
		v_{\text{re}}(y)=v(y)- v_0(y) = u_{\text{re}}(y,1).
	\end{equation}
	Our primary goal is to get a priori estimates for $	v_{\text{re}}(y)$.
	It is easy to see that $v_0$ satisfies
	\begin{equation}\label{eq:equforv1}
		-\Delta  v_0 -\frac{1}{2} v_0 -\frac{1}{2}y\cdot \nabla  v_0 = 0, ~\div v_0=0.
	\end{equation}
	Hence, $	v_{\text{re}}$ satisfies
	\begin{equation}\label{eq:difference}
		\left\{
		\begin{aligned}
			&-\Delta v_{\text{re}} -\frac{1}{2}v_{\text{re}} -\frac{1}{2}y\cdot \nabla v_{\text{re}}+  v_0\cdot \nabla v_{\text{re}} + v_{\text{re}}\cdot \nabla  v_0 + v_{\text{re}}\cdot \nabla v_{\text{re}} +\nabla q=- v_0\cdot\nabla  v_0, \\
			&  \div v_{\text{re}}=0,
		\end{aligned} 
		\right. \textrm{ in } \mathbb{R}^2.
	\end{equation}

    We define  the following function space for later use.
			\begin{equation}\label{eq:defiofX}
				X=\{ \Bv:\ \div \Bv =0, \ \Bv\in C^1(\mathbb{R}^2), ~ \|\Bv\|_{X}<+\infty\},
			\end{equation}
			where the norm $\|\cdot\|_X$ is 
			\begin{equation}\label{eq:defofnorm}
				\|\Bv\|_{X}= \|(1+|x|^2)\Bv\|_{C(\mathbb{R}^2)} +\|(1+|x|^2)\nabla \Bv\|_{C(\mathbb{R}^2)}.
			\end{equation}
     Indeed, this space $X$ will be the space where we use the Leray-Schauder fixed-point theorem to seek a solution $v_{\text{re}}$ to \eqref{eq:difference}. We can rather assume that $v_{\text{re}}\in X$ to facilitate later computations; the key point is to obtain enough a priori estimates of $v_{\text{re}}$ in $X$ in terms of the initial data $u_0$ only.
     
	The main objective of this section is to prove the following a priori estimates for $v_{\text{re}}$.
	\begin{theorem}\label{thm:energyestkey}
		For $0<\beta\leq 1$, given the divergence-free, self-similar initial data $u_0(x)\in C^{0,\beta}_{loc}$, and
		\begin{equation*}
			A=\|u_0\|_{C^{0,\beta}(S^1)}<+\infty.
		\end{equation*}
		Assume $u$  is a self-similar solution to \eqref{eq:NS} in the sense of Definition \ref{def:energy persol},
		and that 
        $v_{\textrm{re}}$ $ \in X$ is a corresponding solution to
		\eqref{eq:difference} in which $v_0=e^{\Delta}u_0$. Then $v_{\text{re}}$ is smooth and
		there exists a constant $C=C(A,\beta)$ depending only on $A$ and $\beta$, such that
		\begin{equation}\label{eq:energyest}
			\int_{\mathbb{R}^2} |v_{\text{re}}|^2 + |\nabla v_{\text{re}}|^2 dx \leq C(A,\beta).
		\end{equation}
		Moreover,  we have that for any positive constant $1<p\leq \infty$, there is a constant  $C=C(A,\beta,p)$ depending  on $A$, $p$, and $\beta$ that 
		\begin{equation*}
			\|v_{\text{re}}\|_{L^{p}(\mathbb{R}^2)} \leq C(A,\beta,p).
		\end{equation*}
		Furthermore, we have the following estimates for the pressure $q$ in \eqref{eq:difference}. For any positive constant $1<p< \infty$, we have
		\begin{equation*}
			\|q\|_{L^{p}(\mathbb{R}^2)} \leq C(A,\beta,p). 
		\end{equation*}
		and
		also  
		\begin{equation*}
			\|\nabla q\|_{L^{p}(\mathbb{R}^2)} \leq C(A,\beta,p). 
		\end{equation*}
	\end{theorem}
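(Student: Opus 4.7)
The plan is to treat \eqref{eq:difference} as a perturbation of the explicit linear object $v_0=e^{\Delta}u_0$, and to exploit cancellations specific to dimension two in order to close a priori bounds on $v_{\text{re}}$ depending only on $A$ and $\beta$. First I would record the needed estimates for the linear part: since $u_0$ is $(-1)$-homogeneous, divergence-free across the origin by the zero-circulation hypothesis, and $C^{0,\beta}$ on $S^1$ with norm bounded by $A$, direct estimates on the convolution with the heat kernel (using the zero-circulation condition to extract the oscillation needed for decay at infinity) yield pointwise bounds $|\nabla^k v_0(y)|\le C(A,\beta,k)(1+|y|)^{-1-k}$. In particular $\nabla v_0\in L^2(\mathbb{R}^2)$ and $v_0\cdot\nabla v_0\in L^p(\mathbb{R}^2)$ for every $p>2/3$, with constants controlled by $A$ and $\beta$.

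For the Dirichlet energy, I would test \eqref{eq:difference} against $v_{\text{re}}$ itself. The identity $\operatorname{div} y=2$ makes the scaling piece $-\tfrac12 v_{\text{re}}-\tfrac12 y\cdot\nabla v_{\text{re}}$ exactly orthogonal to $v_{\text{re}}$, producing no coercive $L^2$ lower bound; this is the core 2D obstruction. The cubic self-convection and the $v_0\cdot\nabla v_{\text{re}}$ term vanish by divergence-free integration by parts, and after invoking \eqref{eq:equforv1} to reorganize the surviving trilinear piece together with the forcing, one arrives at the cancellation identity
\begin{equation*}
\int_{\mathbb{R}^2}|\nabla v_{\text{re}}|^2\,dy+2\int_{\mathbb{R}^2}\nabla v_0\cdot\nabla v_{\text{re}}\,dy=0,
\end{equation*}
so that Cauchy-Schwarz yields $\|\nabla v_{\text{re}}\|_{L^2}\le 2\|\nabla v_0\|_{L^2}\le C(A,\beta)$.

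To turn this Dirichlet bound into integrability of $v_{\text{re}}$ itself (no Poincar\'e inequality being available on $\mathbb{R}^2$), I would then test \eqref{eq:difference} against $|v_{\text{re}}|^{p-2}v_{\text{re}}$ for $1<p<2$. The scaling term now produces the coercive quantity $(\tfrac1p-\tfrac12)\|v_{\text{re}}\|_{L^p}^p$, diffusion contributes $(p-1)\int|v_{\text{re}}|^{p-2}|\nabla v_{\text{re}}|^2$, and the cubic and $v_0\cdot\nabla v_{\text{re}}$ terms again vanish. The hardest obstacle at this stage is the pressure contribution $\int\nabla q\cdot|v_{\text{re}}|^{p-2}v_{\text{re}}$, since the test function is no longer solenoidal; I would close this by recovering $q$ from the pressure equation (see below) and applying Calder\'on-Zygmund estimates, absorbing a small piece back into the coercive $L^p$ term. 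The remaining $v_0$-dependent terms are H\"older-estimated using the decay bounds of $v_0$ from the first step. This yields $\|v_{\text{re}}\|_{L^p}\le C(A,\beta,p)$ for every $p\in(1,2)$.

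Finally, interpolating this $L^p$ bound with $\|\nabla v_{\text{re}}\|_{L^2}$ via Gagliardo-Nirenberg in $\mathbb{R}^2$ promotes $v_{\text{re}}\in L^q$ to every $q\in[p,\infty)$, giving the $L^2$ half of \eqref{eq:energyest}. For the pressure, taking $\operatorname{div}$ of \eqref{eq:difference} and noting that $\operatorname{div} v_{\text{re}}=0$ forces $\operatorname{div}(y\cdot\nabla v_{\text{re}})=0$, one obtains
\begin{equation*}
-\Delta q=\partial_i\partial_j\bigl((v_0+v_{\text{re}})_i(v_0+v_{\text{re}})_j-v_{0,i}v_{0,j}\bigr),
\end{equation*}
and Calder\'on-Zygmund estimates then deliver the $L^p$ and $W^{1,p}$ bounds on $q$ for all $1<p<\infty$ from the quadratic products. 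The $L^\infty$ bound on $v_{\text{re}}$ comes from a Stokes bootstrap -- writing \eqref{eq:difference} as $-\Delta v_{\text{re}}+\nabla q=F$ with $F\in L^p$ for arbitrarily large $p$, and then using $W^{2,p}\hookrightarrow L^\infty$ for any $p>1$ in $\mathbb{R}^2$ -- while smoothness follows from further interior elliptic regularity.
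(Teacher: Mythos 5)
Your proposal follows the paper's strategy (same decomposition $v=v_0+v_{\text{re}}$, the same cancellation identity for the Dirichlet energy, the same multiplier $|v_{\text{re}}|^{p-2}v_{\text{re}}$ for $1<p<2$, Gagliardo--Nirenberg interpolation, Calder\'on--Zygmund for $q$), but it misses the one device that makes the argument work for \emph{large} data. In your $L^p$ step, the terms that scale exactly like $\|v_{\text{re}}\|_{L^p}^p$ are $\int (v_{\text{re}}\cdot\nabla v_0)\cdot|v_{\text{re}}|^{p-2}v_{\text{re}}$ and, inside the pressure estimate, the piece coming from $\|v_{\text{re}}\,\nabla v_0\|_{L^p}\le \|\nabla v_0\|_{L^\infty}\|v_{\text{re}}\|_{L^p}$; their coefficient is $\|\nabla v_0\|_{L^\infty}\sim C(A)$, which is \emph{not} small, while the coercive term only carries the fixed constant $\tfrac1p-\tfrac12$. "Absorbing a small piece" therefore fails as soon as $A$ is large, and the theorem assumes no smallness. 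The paper resolves exactly this by never running the multiplier argument on $(v_0,v_{\text{re}})$: it replaces $v_0$ by the cut-off-and-corrected profile $v_1=\eta_{R_0}v_0+w$ of \eqref{eq:defoftildev}, with $R_0=R_0(A,\beta,p)$ chosen so large that $\|v_1\|_{L^\infty}+\|\nabla v_1\|_{L^\infty}\le\epsilon$ (Lemma \ref{lem:Estimates for modified linear part}), works with $v_2=v-v_1$ and the extra integrable forcing $F(v_0,w)$, and only returns to $v_{\text{re}}$ at the end. Without this (or some substitute) your $L^p$ bound, and hence the $L^2$ and pressure bounds built on it, does not close.

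A second, more local gap is your derivation of the key identity. Testing \eqref{eq:difference} with $v_{\text{re}}$ alone leaves
\begin{equation*}
\int_{\mathbb{R}^2}|\nabla v_{\text{re}}|^2\,dy+\int_{\mathbb{R}^2}\bigl((v_0+v_{\text{re}})\cdot\nabla v_0\bigr)\cdot v_{\text{re}}\,dy=0,
\end{equation*}
and the leftover term is \emph{quadratic} in $v_{\text{re}}$; since \eqref{eq:equforv1} is a linear equation for $v_0$, pairing it against any test function produces only expressions linear in that test function and cannot convert this quadratic term into $2\int\nabla v_0:\nabla v_{\text{re}}$. The identity \eqref{eq:crucialidentity} is correct, but its proof requires in addition testing \eqref{eq:difference} against $v_0$ (the paper tests against $v_0+v_{\text{re}}$ and \eqref{eq:equforv1} against $v_{\text{re}}$ and adds), so that $\int(v_{\text{re}}\cdot\nabla v_0)\cdot v_{\text{re}}$ cancels against $\int(v_{\text{re}}\cdot\nabla v_{\text{re}})\cdot v_0$. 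Smaller points: for $C^{0,\beta}$ data the decay of derivatives is $|\nabla^k v_0|\le C(1+|y|)^{-1-\beta}$ (Lemma \ref{lem:lineardecay}), not $(1+|y|)^{-1-k}$ as you claim (harmless here, but the stronger rate is false); your $L^\infty$ bootstrap puts $\tfrac12\,y\cdot\nabla v_{\text{re}}$ on the right-hand side, yet its unbounded coefficient means it is not known to lie in $L^p(\mathbb{R}^2)$ at that stage --- the paper instead proves an $H^2$ bound (Proposition \ref{pro:H2est}) by a difference-quotient argument that keeps the drift on the left and exploits its sign; and the pressure equation should read $-\Delta q=\partial_i\partial_j\bigl((v_0+v_{\text{re}})_i(v_0+v_{\text{re}})_j\bigr)$, since the forcing $-v_0\cdot\nabla v_0$ in \eqref{eq:difference} does contribute to $q$, though this slip does not affect the stated $L^p$ conclusions.
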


	\subsection{Main difficulties, key ideas and the outline  for the energy estimates}\label{sec:outline}
	In this section, we give the main ideas for proving the a priori energy estimates  in Theorem  \ref{thm:energyestkey}.
	
	Formally multiplying \eqref{eq:difference} by $v_{\text{re}}$ and integrating by parts, we arrive at the energy identity
	\begin{equation}\label{eq:energyesti}
		\int_{\mathbb{R}^2} |\nabla v_{\text{re}}|^2 + ( v_{\text{re}}\cdot \nabla  v_0)\cdot v_{\text{re}} = 	-\int_{\mathbb{R}^2} ( v_0\cdot\nabla  v_0)\cdot  	v_{\text{re}}.
	\end{equation}
	The main difficult lies in controlling the term $\int_{\mathbb{R}^2}  ( v_{\text{re}}\cdot \nabla  v_0)\cdot v_{\text{re}} $. 
	If one uses integration by parts, we have the following estimates
	\begin{equation*}
		\int_{\mathbb{R}^2}  ( v_{\text{re}}\cdot \nabla  v_0)\cdot v_{\text{re}}  = - \int_{\mathbb{R}^2}  ( v_{\text{re}}\cdot \nabla  v_{\text{re}})\cdot  v_0 \leq \| v_0\|_{L^\infty}\int_{\mathbb{R}^2}  ( |v_{\text{re}}|^2+ | \nabla v_{\text{re}}|^2),
	\end{equation*}
	yet $\|v_{\text{re}}\|_{L^2(\mathbb{R}^2)}$ cannot be controlled at this stage. This situation differs from the three-dimensional case,  where there is an additional term $\frac{1}{4}\int_{\mathbb{R}^2} |v_{\text{re}}|^2$ that appears in the left-hand side of \eqref{eq:energyesti} and
	help to close the energy estimates, provided $\|v_0\|_{L^\infty}$ is small via a suitable cut-off; see also \cite{Tsai17, Bradshaw17}.
	
	Our key insight is that an inherent structural relationship exists between $v_0$ and $v_{\text{re}}$, which can be more effectively utilized than a simple size estimate of $\int (v_{\text{re}} \cdot \nabla v_0) \cdot v_{\text{re}}$. 
	Our way to exploit this inherent relation is as follows.
	We	multiply \eqref{eq:difference} by $( v_0+v_{\text{re}})$ and multiply \eqref{eq:equforv1} by $v_{\text{re}}$. Summing the resulting equations and integrating over $\mathbb{R}^2$, we obtain, after integration by parts,  a very crucial identity
	\begin{equation}\label{eq:crucialidentity}
		\int_{\mathbb{R}^2} |\nabla v_{\text{re}}|^2 dy + 2\int_{\mathbb{R}^2} \nabla  v_0\cdot \nabla v_{\text{re}} dy =0.
	\end{equation}
	This, together with the H\"older inequality, leads to 
	\begin{equation}\label{eq:dirichletener}
		\int_{\mathbb{R}^2} |\nabla v_{\text{re}}|^2 dy \leq  4\int_{\mathbb{R}^2}|\nabla  v_0|^2 dy\leq C.
	\end{equation}
	as $\nabla  v_0\sim \frac{C}{1+|y|^{1+\beta}}\in L^2(\mathbb{R}^2)$.
	This provides the initial estimate for the Dirichlet energy of $v_{\text{re}}$.
    
	To obtain the $L^2$ bound for $v_{\text{re}}$, we return to  \eqref{eq:difference} and  try to utilize the good term $-\frac{1}{2}y\cdot \nabla v_{\text{re}}$ as much as possible. The main idea is to use the multiplier $|v_{\text{re}}|^{p-2}v_{\text{re}}$ for some $1<p<2$.
	If we formally test \eqref{eq:difference} with $|v_{\text{re}}|^{p-2}v_{\text{re}}$, we have 
	\begin{equation*}
		\begin{aligned}
			&\int_{\mathbb{R}^2} (p-1)|\nabla v_{\text{re}}|^2 |v_{\text{re}}|^{p-2}  + \left(\frac{1}{p}-\frac{1}{2}\right) |v_{\text{re}}|^{p} \\
			\leq& - \int_{\mathbb{R}^2}  (v_{\text{re}}\cdot \nabla  v_0)\cdot |v_{\text{re}}|^{p-2}v_{\text{re}} + \nabla q\cdot |v_{\text{re}}|^{p-2}v_{\text{re}} + ( v_0\cdot \nabla  v_0)\cdot |v_{\text{re}}|^{p-2} v_{\text{re}}.
		\end{aligned}
	\end{equation*}
	The left hand side contains an additional term $\int_{\mathbb{R}^2} \left(\frac{1}{p}-\frac{1}{2}\right) |v_{\text{re}}|^{p} $, which 
	enables us to close the estimates for
	$\|v_{\text{re}}\|_{L^{p}}$ for any $1<p<2$ eventually. 
	The key issue is to control $\int_{\mathbb{R}^2}  (v_{\text{re}}\cdot \nabla  v_0)\cdot |v_{\text{re}}|^{p-2}v_{\text{re}} $  and $\int_{\mathbb{R}^2}  \nabla q\cdot |v_{\text{re}}|^{p-2}v_{\text{re}}$  as the last term on the right hand side is more manageable.
	To achieve this, we introduce a modified profile of $v_0$ to ensure it is small near the origin. Finally, once the estimate for $\|v_{\text{re}}\|_{L^{p}}$  is established, the Gagliardo–Nirenberg interpolation inequality combined with \eqref{eq:dirichletener} yields the desired $L^2$-boundedness of $v_{\text{re}}$. The higher integrability estimate of $\|v_{\text{re}}\|_{L^{p}}$ when $p>2$ will follow from regularity estimates for the Stokes equation and the bootstrapping argument.

	\subsection{Proof of the energy estimates}
	As preliminaries,
	we begin with the following decay estimates for the solution to the heat equation.  It is more or less standard, and we leave the proof to Appendix \ref{app:lineardecay}.
	\begin{lemma}\label{lem:lineardecay}
		Let $u_0$ be divergence-free on $\mathbb{R}^2$ and self-similar, and let $0\leq \beta\leq 1$, denote
		\begin{equation*}
			A= \|u_0\|_{C^{0, \beta}(S^1)} 		<+\infty.
		\end{equation*}
		Then  $ v_0 = e^{\Delta}u_0(y)$ defined in \eqref{eq:defofv1} is smooth and divergence-free.
		Furthermore, it holds that 
		\begin{equation}\label{eq:scale-invbou-2}
			|  v_0 (y)|\leq \frac{C(A)}{1+|y|}, ~
			|\nabla^k  v_0 (y)|\leq \frac{C(A,\beta, k)}{1+|y|^{1+\beta}}, ~ k\in \mathbb{N}, k\geq 1.
		\end{equation}
	\end{lemma}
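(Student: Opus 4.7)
The plan is to verify well-definedness and smoothness of $v_0$, then extract the pointwise decay by direct kernel estimates that exploit the $(-1)$-homogeneity and the Hölder structure of $u_0$.

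First, for well-definedness and smoothness: since $u_0$ is $(-1)$-homogeneous and satisfies $\|u_0\|_{L^\infty(S^1)} \leq A$, we have $|u_0(z)| \leq A/|z|$ for $z \neq 0$. Combined with the Gaussian decay of the heat kernel $K(w)=\frac{1}{4\pi}e^{-|w|^2/4}$, the integral defining $v_0$ converges absolutely and can be differentiated under the integral sign arbitrarily many times, so $v_0 \in C^\infty(\mathbb{R}^2)$. The divergence-free property passes from $u_0$ to $v_0$: under the hypothesis $\int_{S^1} u_0 \cdot n \, d\sigma = 0$, $u_0$ is divergence-free in the distributional sense on all of $\mathbb{R}^2$ (cf.\ Remark \ref{rem:deltaatorigin}), and the heat semigroup commutes with $\nabla\cdot$.

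Second, for the $L^\infty$ decay $|v_0(y)| \leq C(A)/(1+|y|)$, I split the convolution $v_0(y)=\int K(y-z)u_0(z)\,dz$ into the regions $\{|z|<|y|/2\}$ and $\{|z|\geq|y|/2\}$ (valid for $|y|\geq 1$). On the first region $|y-z|\geq|y|/2$ gives $K(y-z)\leq Ce^{-|y|^2/16}$, while the integrand singularity is tame since $\int_{|z|<|y|/2} A/|z|\,dz \leq CA|y|$, so the contribution is exponentially small. On the second region the factor $|u_0(z)|\leq 2A/|y|$ pulls out and $\int K=1$ gives the $2A/|y|$ bound. For $|y|\leq 1$, splitting instead at $|z|=2$ and using only $|u_0(z)|\le A/|z|$ together with the Gaussian bound on $K$ yields a uniform bound. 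Combining, $|v_0(y)|\leq C(A)/(1+|y|)$.

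Third, for the derivative bounds, I exploit the cancellation $\int \nabla_y^k K(y-z)\,dz = \nabla_y^k 1 = 0$ for $k\geq 1$ to rewrite
\begin{equation*}
\nabla^k v_0(y) = \int_{\mathbb{R}^2} \nabla^k K(y-z)\,\bigl(u_0(z)-u_0(y)\bigr)\,dz.
\end{equation*}
For $|y|\geq 2$, the key quantitative input is the homogeneity-plus-Hölder estimate
\begin{equation*}
|u_0(z)-u_0(y)| \leq \frac{C(A)\,|z-y|^\beta}{|y|^{1+\beta}} \quad \text{for } |z-y|\leq |y|/2,
\end{equation*}
which I would prove by writing $u_0(z)=|z|^{-1}\phi(z/|z|)$ with $\phi\in C^{0,\beta}(S^1)$ and splitting the difference into an angular part (where $|\phi(z/|z|)-\phi(y/|y|)|\leq A(|z-y|/|y|)^\beta$ since the angular distance between $z/|z|$ and $y/|y|$ is $\lesssim|z-y|/|y|$) and a radial part (where $|1/|z|-1/|y||\leq 2|z-y|/|y|^2$ is controlled by $|z-y|^\beta/|y|^{1+\beta}$ because $|z-y|^{1-\beta}/|y|^{1-\beta}$ is bounded). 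Inserting this estimate into the integral over $\{|z-y|\leq|y|/2\}$ yields $C(A)/|y|^{1+\beta}$ after integrating $|w|^\beta|\nabla^k K(w)|\,dw$, and on the complementary region $\{|z-y|>|y|/2\}$ the Gaussian decay of $\nabla^k K$ defeats the polynomial growth of $|u_0(z)|+|u_0(y)|$, producing an exponentially small remainder. For $|y|\leq 2$, uniform boundedness of $\nabla^k v_0$ follows directly from the smoothness already established. The main obstacle is the homogeneity-plus-Hölder comparison lemma: tracking the angular and radial variations cleanly so that the exponent $|y|^{-1-\beta}$ emerges with the right constant; everything else is routine Gaussian bookkeeping.
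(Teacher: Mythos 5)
Your proposal is correct and follows essentially the same route as the paper's proof: direct splitting of the convolution for the $k=0$ bound, and for $k\ge 1$ the cancellation $\int \nabla^k G(y-z)\,dz=0$ combined with the homogeneity-plus-H\"older comparison estimate (the paper's Proposition \ref{prop:Holderdecay}, which you reprove in the local regime $|z-y|\le |y|/2$). The only minor differences are your two-region splits in place of the paper's three, and that your unified argument also covers $\beta=1$, which the paper instead handles separately by integrating by parts to put the derivative on the Lipschitz data.
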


		We now proceed to establish energy estimates for $v_{\text{re}} = v - v_0$. As discussed in Section \ref{sec:outline}, we need to construct a modified profile of $v_0$, which we denote by $v_1$,  that is globally small via a suitable cut-off procedure.
		To do so,  let $\eta(x)=\eta(|x|)$ be a smooth non-negative cut-off function
		satisfying 
		\begin{equation*}
			0\leq \eta (|x|) \leq 1, ~	\eta (|x|) =1 \textrm{ for } |x|\geq 2 \textrm{ and } \eta (|x|) =0  \textrm{ for }  |x|\leq 1.
		\end{equation*}
		We also denote for $R_0>1$ that
		\begin{equation}\label{eq:resacledversion}
			\eta_{R_0}(x)=\eta\left(\frac{x}{{R_0}}\right),
		\end{equation}
		then there exists some universal constant $C$ that
		\begin{equation*}
			|\nabla^k \eta_{R_0}|\leq \frac{C}{R_0^k}, ~ \forall k\in \mathbb{N}.
		\end{equation*}
		We define a modified version of $v_0$ by 
		\begin{equation}\label{eq:defoftildev}
			v_1= \eta_{R_0}	 v_0 +w, 
		\end{equation}
		where $w$ is given by
        \begin{equation}\label{eq:corr}
			\begin{aligned}
				w(y) &=  - \frac{1}{2\pi}\nabla_y \int_{\mathbb{R}^2} (\ln |y-z|)v_0\cdot \nabla  \eta_{R_0}(z)dz
				= -\frac{1}{2\pi} \int_{\mathbb{R}^2} \frac{y-z}{|y-z|^2}v_0 \cdot \nabla  \eta_{R_0}(z)dz.
			\end{aligned}
		\end{equation}
        It follows that $w$ solves 
		\begin{equation*}
			\div w = -v_0\cdot \nabla \eta_{R_0}, 
		\end{equation*}
and hence  $\div v_1=0$.
		
		\begin{remark}
			Such a correction term $w$ is obviously not unique. Typically, it can be constructed with compact support in the annulus $B_{2R_0} \setminus B_{R_0}$ using the Bogovskii formula, given that $v_0 \cdot \nabla \eta_{R_0}$ is smooth and supported in that region (cf. \cite[Theorem III.3.3]{Galdi11}). The representation in \eqref{eq:corr}, however, may not have compact support.
		\end{remark}

		We now give the estimates for the modified   profile 	$v_1= \eta_{R_0}	 v_0 +w$.
		\begin{lemma}[Estimates for $  v_1$]\label{lem:Estimates for modified linear part}
			Let $v_0$ satisfy the estimates in Lemma \ref{lem:lineardecay}.
			The vector field $v_1$ defined in \eqref{eq:defoftildev} is smooth, divergence-free, and
			we have the following decay estimates for $v_1$ and $w$:
			\begin{equation}\label{eq:estv1}
			|   v_1(y)|\leq \frac{C(A,R_0)}{1+|y|} ~\textrm{ and }~	|  \nabla^k v_1(y)|\leq \frac{C(A,\beta,k, R_0)}{1+|y|^{1+\beta}}, ~k\in\mathbb{N}^+, 
			\end{equation}
			and 
			\begin{equation}\label{eq:estw}
				|  \nabla^k w(y)|\leq \frac{C(A,\beta, k, R_0)}{1+|y|^{k+2}}, ~ k=0,1,2\cdots,
			\end{equation}
			where the constant $C(A,\beta, R_0)$ depends only on $A$, $\beta$, $k$ and $R_0$.
			Moreover, for any $\epsilon>0$, we can choose $R_0=R_0(A,\beta, \epsilon)$ in \eqref{eq:resacledversion} large enough so that 
			\begin{equation}\label{eq:smallness}
				\|  v_1\|_{L^{\infty}(\mathbb{R}^2)}\leq \epsilon, ~ \|\nabla   v_1\|_{L^{\infty}(\mathbb{R}^2)} \leq \epsilon.
			\end{equation}
		\end{lemma}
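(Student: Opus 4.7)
The plan is to handle the two pieces of $v_1 = \eta_{R_0} v_0 + w$ separately: for $\eta_{R_0} v_0$, read off the decay directly from Lemma \ref{lem:lineardecay} restricted to the support $\{|y|\ge R_0\}$; for $w$, exploit its Newtonian-potential representation together with a crucial vanishing-moment cancellation. First I would record that, since $v_0$ is smooth and bounded on $\mathbb{R}^2$ by Lemma \ref{lem:lineardecay}, the source $f := v_0\cdot \nabla \eta_{R_0}$ is smooth and supported in the annulus $\{R_0 \le |z| \le 2R_0\}$, so $w=-\nabla\Delta^{-1} f$ is smooth on all of $\mathbb{R}^2$; divergence-freeness of $v_1$ is then built into the construction, since $\div w = -v_0\cdot\nabla\eta_{R_0}=-\div(\eta_{R_0}v_0)$. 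The decay estimates for $\eta_{R_0}v_0$ are straightforward: $\eta_{R_0}$ vanishes on $B_{R_0}$, so the product and its derivatives inherit the corresponding decay of $v_0$ and $\nabla^k v_0$, weighted against the bounded cut-off.

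The heart of the argument lies in the estimate on $w$. The key observation is the vanishing zeroth moment
\begin{equation*}
\int_{\mathbb{R}^2} v_0\cdot \nabla \eta_{R_0}\,dz = 0,
\end{equation*}
which I would verify by writing $\int \nabla \eta_{R_0}\cdot v_0 = \int \div(\eta_{R_0}v_0)$, approximating by integrals over large balls, and noting that since $v_0$ is divergence-free and bounded near the origin, the flux $\int_{\partial B_r} v_0\cdot n$ is independent of $r>0$ and vanishes as $r\to 0$. With this cancellation in hand, for $|y|\ge 4R_0$ I would write
\begin{equation*}
\nabla^k w(y) = -\frac{1}{2\pi}\int \bigl[\nabla_y^{k+1}\ln|y-z|-\nabla_y^{k+1}\ln|y|\bigr]\,f(z)\,dz,
\end{equation*}
and Taylor-expand the kernel in $z$ around the origin to gain one extra power of $|y|^{-1}$ beyond the naive bound. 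Combined with $\|f\|_\infty\le C(A)/R_0^2$ and the area bound $O(R_0^2)$ on the support, this yields $|\nabla^k w(y)|\le C(A,\beta,k,R_0)/|y|^{k+2}$ in the far field. For bounded $|y|$ the pointwise bound follows from interior regularity of $\Delta^{-1}$ applied to a smooth compactly supported source, delivering the global estimate \eqref{eq:estw}; summing with the $\eta_{R_0}v_0$ contribution then yields \eqref{eq:estv1}.

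For the smallness claim I would track the $R_0$-dependence explicitly, this time without invoking the vanishing moment. From $|v_0(y)|\le C(A)/(1+|y|)$ one gets $\|\eta_{R_0}v_0\|_\infty\le C(A)/R_0$ and, since the dominant contribution comes from $\eta_{R_0}\nabla v_0$, also $\|\nabla(\eta_{R_0}v_0)\|_\infty\le C(A,\beta)/R_0^{1+\beta}$. For $w$, a direct estimate gives $\|w\|_\infty \le C\|f\|_\infty\cdot R_0 \le C(A)/R_0$, because for every $y$ we have $\int_{\mathrm{supp}\,f}|y-z|^{-1}\,dz\le \int_{|y-z|\le 3R_0}|y-z|^{-1}\,dz = O(R_0)$. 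For $\nabla w$, an integration by parts moves one derivative from the logarithmic kernel onto $f$ (legitimate since $f$ is smooth and compactly supported), leaving the same first-order kernel acting on $\nabla f$; using $\|\nabla f\|_\infty\le C(A,\beta)/R_0^{2+\beta}$ then yields $\|\nabla w\|_\infty \le C(A,\beta)/R_0^{1+\beta}$. Choosing $R_0=R_0(A,\beta,\epsilon)$ large enough produces \eqref{eq:smallness}.

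The main obstacle I anticipate is reconciling two different types of estimates on $w$: the uniform smallness $\|w\|_\infty, \|\nabla w\|_\infty \to 0$ as $R_0\to\infty$ only needs size bounds on $f$ and $\nabla f$, whereas the faster spatial decay $|\nabla^k w|\lesssim (1+|y|)^{-(k+2)}$ crucially requires the vanishing moment and Taylor cancellation. Keeping the $R_0$-dependent constants honest across the two regimes, and interpolating between near-field (smoothness-based) and far-field (cancellation-based) bounds to produce the single pointwise estimate \eqref{eq:estw}, is the main bookkeeping burden; the analytic ingredients themselves are essentially standard once the vanishing moment is in place.
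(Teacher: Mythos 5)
Your proof is correct, and for the decay estimates it is essentially the paper's argument: the same splitting $v_1=\eta_{R_0}v_0+w$, the same vanishing-moment identity $\int_{\mathbb{R}^2} v_0\cdot\nabla\eta_{R_0}\,dz=0$ (the paper gets it from $\int_{\partial B_{2R_0}}v_0\cdot n\,d\sigma=0$ using $\div v_0=0$; your flux-constancy argument is an equivalent reformulation), and the same kernel-subtraction/Taylor expansion $\nabla^{k+1}\ln|y-z|-\nabla^{k+1}\ln|y|$ to gain the extra power $|y|^{-1}$ in the far field, with near-field bounds from smoothness of the compactly supported source. Where you genuinely diverge is the smallness claim \eqref{eq:smallness}: the paper proves it qualitatively by showing $\|\eta_{R_0}v_0\|_{W^{2,r}(\mathbb{R}^2)}\to 0$ for some $r>2$, transferring this to $w$ via Calder\'on--Zygmund estimates for the double Riesz transform, and concluding through Morrey's embedding $W^{2,r}\hookrightarrow C^{1,\alpha}$; you instead estimate the Newtonian potential directly, using $\|f\|_\infty\lesssim R_0^{-2}$, $\|\nabla f\|_\infty\lesssim R_0^{-(2+\beta)}$ (after moving one derivative onto $f$) and the bound $\int_{\su f}|y-z|^{-1}dz=O(R_0)$, which gives the explicit rates $\|w\|_\infty\lesssim R_0^{-1}$, $\|\nabla w\|_\infty\lesssim R_0^{-(1+\beta)}$. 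Your route is more elementary (no $L^r$ singular-integral theory or Sobolev embedding) and quantitatively sharper, at the small cost of justifying the $O(R_0)$ kernel bound (a bathtub/rearrangement step, or a split into $|y-z|\le 3R_0$ and its complement) and of tracking the two terms in $\nabla f$; both deliver \eqref{eq:smallness}, so either argument can be used interchangeably here.
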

		\begin{proof}
        The smoothness of $v_1$ and $\div v_1=0$ is clear from the construction. We  focus on proving \eqref{eq:estv1}-\eqref{eq:smallness}.
			First of all, it follows Lemma \ref{lem:lineardecay} that
			\begin{equation*}
				| \eta_{R_0}	 v_0|\leq \frac{C(A)}{1+|y|} \quad \text{and} \quad 	| \nabla (\eta_{R_0}	 v_0)|\leq \frac{C(A,\beta)}{1+|y|^{1+\beta}}.
			\end{equation*} 		
			Hence, to prove \eqref{eq:estv1} and \eqref{eq:estw}, it suffices to prove \eqref{eq:estw}. We only  consider the case for $|y|\geq 3R_0$, as the boundedness and smoothness of $w$ inside $|y|\leq 3R_0$ is easy to see using the boundedness and smoothness of $v_0$.
			Since 
			$v_0 \cdot \nabla  \eta_{R_0}$ is smooth and has compact support, we have
			\begin{equation*}
				\int_{\mathbb{R}^2} v_0 \cdot \nabla  \eta_{R_0} = \int_{B_{2R_0}\setminus B_{R_0}} \textrm{ div } (\eta_{R_0} v_0) =  \int_{\partial B_{2R_0}}  v_0\cdot n d\sigma=0.
			\end{equation*}
		Therefore, it holds that
			\begin{equation*}
				\begin{aligned}
					w(y) &=    -\frac{1}{2\pi} \int_{\mathbb{R}^2} \left(  \frac{y-z}{|y-z|^2} - \frac{y}{|y|^2}  \right) v_0 \cdot \nabla  \eta_{R_0}(z) dz\\
					&=-\frac{1}{2\pi} \int_{B_{2R_0}} \left(  \frac{y-z}{|y-z|^2} - \frac{y}{|y|^2}  \right) v_0 \cdot \nabla  \eta_{R_0}(z) dz.
				\end{aligned}	
			\end{equation*}
			We can rewrite the above as
			\begin{equation*}
				\begin{aligned}
					w(y) & = -\frac{1}{2\pi} \int_{B_{2R_0}} \left(  \frac{y-z}{|y-z|^2} - \frac{y}{|y|^2}  \right) v_0 \cdot \nabla  \eta_{R_0}(z) dz \\
					&= -\frac{1}{2\pi} \int_{B_{2R_0}} \left(\int_{0}^{1} \frac{d}{ds}\frac{y-sz}{|y-sz|^2} ds\right) v_0 \cdot \nabla  \eta_{R_0}(z) dz \\
					&= -\frac{1}{2\pi} \int_{B_{2R_0}}  \left(  \int_{0}^{1}(-z) \cdot \nabla \frac{y-sz}{|y-sz|^2} ds \right) v_0 \cdot \nabla  \eta_{R_0}(z) dz.
				\end{aligned}
			\end{equation*}
			Hence, if $|y|\geq 3R_0$, one has
			\begin{equation*}
				\begin{aligned}
					|w(y)| \leq 
					&\frac{1}{2\pi} \int_{B_{2R_0}}    \int_{0}^{1}|z|  \frac{1}{|y-sz|^2} ds  |v_0 \cdot \nabla  \eta_{R_0}|dz	\\
					&\leq \frac{9 R_0}{\pi} \int_{B_{2R_0}}  \frac{1}{|y|^2}|v_0 \cdot \nabla  \eta_{R_0}|dz \leq \frac{C(A,R_0)}{|y|^2},
				\end{aligned}
			\end{equation*}
			where we have used that $|y-sz|\geq |y|-s|z| \geq |y|-|z|\geq\frac{1}{3}|y|$ for $|y|\geq 3R_0$ and $|z|\leq 2R_0$.
			This shows that 
			\begin{equation*}
				|  w(y)|\leq \frac{C(A,R_0)}{1+ |y|^2}.
			\end{equation*}
			The proof for $|  \nabla^k w(y)|\leq \frac{C(A,\beta, k, R_0)}{1+|y|^{k+2}}$, $k\geq 1$, 
            is similar to the above calculation by taking the derivatives into the kernel and using the property $\int_{\mathbb{R}^2} v_0 \cdot \nabla  \eta_{R_0} = 0$,  we omit details.
			
			To prove \eqref{eq:smallness}, we
			first note  due to Lemma \ref{lem:lineardecay} and the definition of $\eta_{R_0}$ that
			\begin{equation*}
				| \eta_{R_0}	 v_0|\leq \frac{C(A)}{1+|y|} \mathbbm{1}_{B_{R_0}^c}, ~
				|\nabla^k (\eta_{R_0}	 v_0)|\leq \frac{C(A,\beta)}{1+|y|^{1+\beta}} \mathbbm{1}_{B_{R_0}^c}, ~ \textrm{ for } k = 1, 2,
			\end{equation*}
			where $\mathbbm{1}_{\Omega}$ is the characteristic function of $\Omega$ and $B_{R_0}^c$ is the complement of $B_{R_0}$.
			It is then easy to see that  for any $r>2$ we have
			\begin{equation*}
				\| \eta_{R_0}v_0\|_{W^{2,r}(\mathbb{R}^2)} \to 0 \textrm{ as } R_0\to +\infty.
			\end{equation*}
			Using the integral formula for $w$ and the Calderon–Zygmund estimates, we have for any $r>2$ that
			\begin{equation*}
				\|w\|_{W^{2,r}(\mathbb{R}^2)}\leq c(r) \| \eta_{R_0}v_0\|_{W^{2,r}(\mathbb{R}^2)}.
			\end{equation*}
			It also follows from Morrey's inequality that for any $r>2$
			\begin{equation*}
				\|w\|_{C^{1,\alpha}(\mathbb{R}^2)}\leq c(r) \|w\|_{W^{2,r}(\mathbb{R}^2)},
			\end{equation*}
			where $\alpha = 1-\frac{2}{r}$.
			Hence, we have
			\begin{equation*}
				\|w\|_{C^{1,\alpha}(\mathbb{R}^2)}\leq 	c(r)\| \eta_{R_0}v_0\|_{W^{2,r}(\mathbb{R}^2)} \to 0 \textrm{ as } R_0\to +\infty.
			\end{equation*}
			Choose $r=3$ in the above, 
			for any $\epsilon>0$, we can then choose $R_0 = R_0(A, \beta,\epsilon)$ large enough such that
			\begin{equation*}
				\|w\|_{C^{1,\frac{1}{3}}(\mathbb{R}^2)}\leq C \| \eta_{R_0}v_0\|_{W^{2,3}(\mathbb{R}^2)}\leq  \frac{\epsilon}{2}.
			\end{equation*}
			This $R_0 = R_0(A, \beta,\epsilon)$ can already be large enough to make 
			\begin{equation*}
				\|\eta_{R_0}	 v_0\|_{C^{1}(\mathbb{R}^2)} \leq\frac{\epsilon}{2}.
			\end{equation*}
			Combining the above two inequalities, we obtain  \eqref{eq:smallness}. 
			This finishes the  proof of the lemma.
		\end{proof}
		
		With this new profile $v_1= \eta_{R_0}	 v_0 +w$, we define the modified version of $v_{\textrm{re}}$ as
		\begin{equation}\label{eq:modre}
			v_{2}=v- v_1,
		\end{equation}
		where $v$ is the solution to \eqref{eq:NSinself-similar}.
		It is easy to find that $v_2$ satisfies
		\begin{equation}\label{eq:eqfordiff}
			\left\{
			\begin{aligned}
				&-\Delta v_2 -\frac{1}{2}v_2 -\frac{1}{2}y\cdot \nabla v_2+  v_1\cdot \nabla v_2 + v_2\cdot \nabla  v_1+v_2\cdot \nabla v_2 +\nabla q_2=- v_1\cdot\nabla  v_1 - F(v_0,w),\\
				&  \div v_2=0,		
			\end{aligned}		
			\right.
		\end{equation}
        in $\mathbb{R}^2$, 
		where using \eqref{eq:equforv1}, we have
		\begin{equation}\label{eq:eqformodifv}
			\begin{aligned}
					-\Delta  v_1 -\frac{1}{2} v_1 -\frac{1}{2}y\cdot \nabla  v_1 & = F(v_0,w) \\
				&	=  	-\Delta (\eta_{R_0}	v_0 +w )- \frac{1}{2}(\eta_{R_0}	v_0 +w ) -\frac{1}{2}y\cdot \nabla (\eta_{R_0}	v_0 +w )\\
				&= - 2 \nabla \eta_{R_0}\nabla 	v_0- \Delta \eta_{R_0}	v_0   -\frac{1}{2}y\cdot \nabla \eta_{R_0}	 v_0 -\Delta w  -\frac{1}{2} w -\frac{1}{2}y\cdot \nabla  w.
			\end{aligned}
		\end{equation}

		To show Theorem \ref{thm:energyestkey}, we first prove a slightly weaker version for $v_2$. It  says that we have $H^1$-estimate and $L^{p}$-estimate for  $v_2$  when $1<p<2 $.  Hence, we also have $H^1$-estimate and  $L^{p}$-estimate for  $v_{\text{re}}$ when $1<p<2 $.
		\begin{theorem}\label{thm:energyest}
        Let the divergence-free, self-similar initial data $u_0(x)\in C^{0,\beta}_{loc}$ be given, where $0<\beta\leq 1$, and let
			$A=\|u_0\|_{C^{0,\beta}(S^1)}<+\infty$.		
			For any $1<p<2 $, we can  choose
			$R_0=R_0(A,\beta, p)$ in the definition of $v_1$  \eqref{eq:defoftildev} large enough, such that 
     the following a priori estimates  hold,
			\begin{equation*}
				\int_{\mathbb{R}^2} |v_2|^2 + |\nabla v_2|^2 dy \leq C(A,\beta, p),
			\end{equation*}
			and 
			\begin{equation*}
				\|v_2\|_{L^{p}(\mathbb{R}^2)} \leq C(A, \beta, p).
			\end{equation*}
			In particular, choose $p=\frac{3}{2}$ and fix a $R_0=R_0(A,\beta)$ 
            yields that  
			\begin{equation*}
				\int_{\mathbb{R}^2} |v_2|^2 + |\nabla v_2|^2 dy \leq C(A, \beta),
			\end{equation*}
          where the constant $C(A, \beta)$ depends only on $A$ and $\beta$.
		\end{theorem}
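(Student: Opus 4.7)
My plan is to implement the two-multiplier strategy sketched in Section \ref{sec:outline}, carried out for $v_2$ rather than $v_{\text{re}}$ so that the global smallness of $v_1$ from Lemma \ref{lem:Estimates for modified linear part} is available. Throughout I use the a priori regularity and decay encoded in $v_{\text{re}} \in X$, which justifies all integrations by parts and rules out boundary terms at infinity.

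\textbf{Step 1 (Dirichlet energy).} I would produce the $v_2$-analogue of the cancellation identity \eqref{eq:crucialidentity} by testing \eqref{eq:eqfordiff} against the full field $v = v_1 + v_2$ and testing \eqref{eq:eqformodifv} against $v_2$, then adding the two. The nonlinear contributions collapse into $\int (v \cdot \nabla v)\cdot v = 0$ since $\div v = 0$, the pressure drops out for the same reason, and careful integration by parts on the scaling/drift pair $-\tfrac12 v_2 - \tfrac12 y\cdot\nabla v_2$ (together with the corresponding pieces from the $v_1$ equation) eliminates every mass term via $-\tfrac12 \int y\cdot\nabla(v_1\cdot v_2) = \int v_1\cdot v_2$. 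What remains is
\begin{equation*}
\int_{\mathbb{R}^2} |\nabla v_2|^2\,dy + 2\int_{\mathbb{R}^2} \nabla v_1\cdot \nabla v_2\,dy = -\int_{\mathbb{R}^2} F(v_0,w)\cdot v_1\,dy.
\end{equation*}
The right-hand side is finite: the cut-off pieces of $F$ are supported in $\{R_0 \leq |y|\leq 2R_0\}$ where $|y\cdot\nabla\eta_{R_0}|\leq C$, the $w$-pieces decay like $(1+|y|)^{-2}$ by \eqref{eq:estw}, and $|v_1| \leq C(1+|y|)^{-1}$. Cauchy--Schwarz, together with $\nabla v_1 \in L^2(\mathbb{R}^2)$ (using $|\nabla v_0|\leq C(1+|y|)^{-1-\beta}$), then yields $\|\nabla v_2\|_{L^2}^2 \leq C(A,\beta,R_0)$ with \emph{no smallness} needed at this stage.

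\textbf{Step 2 ($L^p$ estimate, $1<p<2$).} I would test \eqref{eq:eqfordiff} against $|v_2|^{p-2}v_2$. The diffusion produces $(p-1)\int |v_2|^{p-2}|\nabla v_2|^2 \geq 0$ via the standard vector-valued computation, while the scaling/convection pair integrates to the strictly positive quantity $\bigl(\tfrac{1}{p}-\tfrac12\bigr)\int |v_2|^p$ after using $\int (y\cdot\nabla v_2)\cdot|v_2|^{p-2}v_2 = -\tfrac{2}{p}\int|v_2|^p$. Both transport terms $\int(v_1\cdot\nabla v_2)\cdot|v_2|^{p-2}v_2$ and $\int(v_2\cdot\nabla v_2)\cdot|v_2|^{p-2}v_2$ vanish by div-freeness, while $\int(v_2\cdot\nabla v_1)\cdot|v_2|^{p-2}v_2$ is bounded by $\|\nabla v_1\|_{L^\infty}\|v_2\|_{L^p}^p$ and absorbed on the left by choosing $R_0=R_0(A,\beta,p)$ so that the $\epsilon$ in Lemma \ref{lem:Estimates for modified linear part} satisfies $\epsilon \ll \tfrac{1}{p}-\tfrac12$. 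The source terms $-\int(v_1\cdot\nabla v_1 + F)\cdot|v_2|^{p-2}v_2$ are controlled by Hölder and Young using the explicit decay of $v_1\cdot\nabla v_1$ and $F$.

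\textbf{Step 3 (Pressure and conclusion).} The main obstacle is the pressure term $\int \nabla q_2 \cdot |v_2|^{p-2}v_2$, which does not vanish because $|v_2|^{p-2}v_2$ is not divergence-free. My plan is to take the divergence of \eqref{eq:eqfordiff} to obtain
\begin{equation*}
-\Delta q_2 = \partial_i\partial_j\bigl(v_2^i v_2^j + v_1^i v_2^j + v_2^i v_1^j\bigr) + G(v_0,w),
\end{equation*}
with $G$ decaying rapidly, apply Calderón--Zygmund estimates to get $\|q_2\|_{L^r} \leq C\bigl(\epsilon\|v_2\|_{L^r} + \|v_2\|_{L^{2r}}^2\bigr) + C(A,\beta,R_0)$ for $1<r<\infty$, and integrate by parts to write $\int \nabla q_2 \cdot |v_2|^{p-2}v_2 = -(p-2)\int q_2\, |v_2|^{p-4}(v_2\cdot\nabla v_2)\cdot v_2$, which can then be split by Young's inequality between a small fraction of $(p-1)\int|v_2|^{p-2}|\nabla v_2|^2$ and a term controlled by $\|q_2\|_{L^2}^2$. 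Matching exponents so that every bad term is absorbed into either the dissipation, a small multiple of $\|v_2\|_{L^p}^p$, or the already-controlled Dirichlet energy is the delicate bookkeeping step where the smallness of $\epsilon$ is fully exploited. Once $\|v_2\|_{L^p}$ is bounded, the 2D Gagliardo--Nirenberg inequality $\|v_2\|_{L^2}\leq C\|v_2\|_{L^p}^{p/2}\|\nabla v_2\|_{L^2}^{1-p/2}$ closes the $H^1$ bound; specializing to $p=\tfrac32$ fixes $R_0=R_0(A,\beta)$ once and for all. Smoothness of $v_2$ then follows from standard interior regularity for the Stokes system, since the right-hand side of \eqref{eq:eqfordiff} is locally bounded by what has been established.
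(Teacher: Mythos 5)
Your Steps 1 and 2 follow the paper's argument: the cancellation identity obtained by testing \eqref{eq:eqfordiff} against $v_1+v_2$ and \eqref{eq:eqformodifv} against $v_2$ is exactly the paper's combination of its three tested identities, and the multiplier $|v_2|^{p-2}v_2$ with the coercive terms $(p-1)\int|v_2|^{p-2}|\nabla v_2|^2$ and $\bigl(\tfrac1p-\tfrac12\bigr)\int|v_2|^p$, the absorption of $\int(v_2\cdot\nabla v_1)\cdot|v_2|^{p-2}v_2$ via the smallness of $v_1$, and the final Gagliardo--Nirenberg step are all as in the paper.

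The genuine gap is in Step 3, your treatment of the pressure term. After integrating by parts you face
\begin{equation*}
\Bigl|\int_{\mathbb{R}^2} q_2\,\div\bigl(|v_2|^{p-2}v_2\bigr)\,dy\Bigr|\le |p-2|\int_{\mathbb{R}^2}|q_2|\,|v_2|^{p-2}|\nabla v_2|\,dy,
\end{equation*}
and any Young or Cauchy--Schwarz splitting that puts a fraction of $\int|v_2|^{p-2}|\nabla v_2|^2$ on one side necessarily leaves $\int|q_2|^2|v_2|^{p-2}\,dy$ on the other. Since $1<p<2$, the weight $|v_2|^{p-2}$ is a \emph{negative} power of $|v_2|$: it blows up on the set where $v_2$ is small (in particular near its zeros and in the far field, where you expect $v_2\to 0$), so this quantity is not controlled by $\|q_2\|_{L^2}^2$, nor by any H\"older rearrangement, because $|v_2|^{p-2}$ lies in no Lebesgue space. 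The same obstruction defeats a regularized multiplier $(|v_2|^2+\delta)^{(p-2)/2}v_2$, since the constant degenerates as $\delta\to 0$. The way the paper closes this step is to \emph{not} integrate by parts: it bounds $\int\nabla q_2\cdot|v_2|^{p-2}v_2\le\|\nabla q_2\|_{L^p}\|v_2\|_{L^p}^{p-1}$, estimates $\|\nabla q_2\|_{L^p}$ by Calder\'on--Zygmund applied to $-\Delta q_2=\div\div\bigl((v_1+v_2)\otimes(v_1+v_2)\bigr)$ (note $\div F(v_0,w)=0$, so no extra source appears), and then uses the interpolation $\|v_2\|_{L^{2p/(2-p)}}\le c_1(p)\|\nabla v_2\|_{L^2}^{p/2}\|v_2\|_{L^p}^{1-p/2}$ together with the already-obtained Dirichlet bound and the smallness of $\|v_1\|_{L^\infty}+\|\nabla v_1\|_{L^\infty}$ to absorb everything into $\tfrac{1}{p}-\tfrac12$ times $\|v_2\|_{L^p}^p$ plus a constant. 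Replacing your Step 3 by this direct duality estimate repairs the argument; as written, the step would fail.
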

		
		\begin{proof}
			\textbf{Step 1:  Estimate of $\|\nabla v_2\|_{L^2}$.}
			Multiplying \eqref{eq:eqfordiff} by $v_2$, then integrating both sides, it leads to
			\begin{equation}\label{eq:sum1}
				\begin{aligned}
					\int_{\mathbb{R}^2} |\nabla v_2|^2 dy   + \int_{\mathbb{R}^2}  (v_2\cdot \nabla  v_1)\cdot v_2 dy +\int_{\mathbb{R}^2}  ( v_1\cdot\nabla  v_1)\cdot v_2 d y=-\int_{\mathbb{R}^2} F(v_0,w)v_2.
				\end{aligned}
			\end{equation}
			Multiplying \eqref{eq:eqfordiff} by $ v_1$, then integrating both sides, it leads to
			\begin{equation}\label{eq:sum2}
				\begin{aligned}
					&\int_{\mathbb{R}^2} \partial_i v_{2,j} \partial_i v_{1,j} dy   + \int_{\mathbb{R}^2}- \frac{1}{2} v_1\cdot v_2-\frac{1}{2}(y\cdot \nabla v_2)\cdot  v_1 dy \\
					&~ +
					\int_{\mathbb{R}^2}  ( v_1\cdot \nabla v_2)\cdot  v_1dy +\int_{\mathbb{R}^2}  (v_2\cdot\nabla v_2)\cdot  v_1 d y = -\int_{\mathbb{R}^2} F(v_0,w) v_1.
				\end{aligned}
			\end{equation}	
			Here and in the following $v_{1,j}$ means the $j$-th component of $v_1$,
			$v_{2,j}$ means the $j$-th component of $v_2$ for $j=1,2$.
			Multiplying  Equation \eqref{eq:eqformodifv}  of $ v_1$ by $v_2$, then integrating both sides, it leads to
			\begin{equation}\label{eq:sum3}
				\begin{aligned}
					&	 \int_{\mathbb{R}^2} \partial_i v_{2,j} \partial_i v_{1,j} dy -  \int_{\mathbb{R}^2} \frac{1}{2} v_1\cdot v_2+\frac{1}{2}y_i \partial_i v_{1,j} v_{2,j} dy \\
					=&\int_{\mathbb{R}^2} \partial_i v_{2,j} \partial_i v_{1,j} dy + \int_{\mathbb{R}^2} -\frac{1}{2} v_1\cdot v_2+  v_1\cdot v_2 +
					\frac{1}{2}y_i \partial_i v_{2,j} v_{1,j} dy \\
					=&\int_{\mathbb{R}^2} \partial_i v_{2,j} \partial_i v_{1,j} dy + \int_{\mathbb{R}^2} \frac{1}{2} v_1\cdot v_2 +
					\frac{1}{2}(y\cdot \nabla v_2)\cdot  v_1 dy =\int_{\mathbb{R}^2} F(v_0,w)\cdot v_2,\\
				\end{aligned}
			\end{equation}
            where integration by parts has been used.
			We also note that
			\begin{equation*}
				\int_{\mathbb{R}^2}(v_2 \cdot \nabla v_2)\cdot  v_1 = 	\int_{\mathbb{R}^2}v_{2,i}\partial _i v_{2,j} v_{1,j}dy =- \int_{\mathbb{R}^2}v_{2,i} v_{2,j} \partial _i v_{1,j}dy = - \int_{\mathbb{R}^2 } (v_2\cdot \nabla v_1) \cdot v_2,
			\end{equation*}
			and 
			\begin{equation*}
				\int_{\mathbb{R}^2}( v_1  \cdot \nabla v_2)\cdot  v_1 = 
				\int_{\mathbb{R}^2}{v}_{1,i}\partial _i v_{2,j} v_{1,j}dy =- \int_{\mathbb{R}^2}{v}_{1,i} v_{2,j} \partial _i v_{1,j}dy =
				-\int_{\mathbb{R}^2 } ( v_1\cdot \nabla v_1) \cdot v_2.
			\end{equation*}
			Hence, adding \eqref{eq:sum1}-\eqref{eq:sum3} together, we have
			\begin{equation}\label{eq:enerid}
				\begin{aligned}
					&\int_{\mathbb{R}^2} |\nabla v_2|^2 dy + 2\int_{\mathbb{R}^2} \partial_i v_{2,j} \partial_i v_{1,j} dy=-\int_{\mathbb{R}^2} F(v_0,w) \cdot v_1\\
					= & \int_{\mathbb{R}^2} \left(2 \nabla \eta_{R_0}\nabla 	v_0+\Delta \eta_{R_0}	v_0   +\frac{1}{2}y\cdot \nabla \eta_{R_0}	 v_0 + \Delta w  + \frac{1}{2} w + \frac{1}{2}y\cdot \nabla  w\right)\cdot v_1.
				\end{aligned}
			\end{equation}
			For the right-hand side of above, 
with the help of Lemma \ref{lem:Estimates for modified linear part}, one has
			\begin{equation*}
				\begin{aligned}
					&\int_{\mathbb{R}^2} \left(2 \nabla \eta_{R_0}\nabla 	v_0+\Delta \eta_{R_0}	v_0   +\frac{1}{2}y\cdot \nabla \eta_{R_0}	 v_0 \right)\cdot v_1\\
					\leq 
					& \int_{\mathbb{R}^2} \frac{C(A,R_0)}{1+|y|}\left|2 \nabla \eta_{R_0}\nabla 	v_0 + \Delta \eta_{R_0}	v_0  + \frac{1}{2}y\cdot \nabla \eta_{R_0}	 v_0 \right|\\
					\leq & \int_{B_{2R_0}} \frac{C(A,R_0)}{1+|y|^2} = C(A,R_0),
				\end{aligned}
			\end{equation*}
			and
			\begin{equation*}
				\begin{aligned}
					\int_{\mathbb{R}^2} \left(\frac{1}{2} w +\frac{1}{2}y\cdot \nabla  w\right)\cdot v_1
					\leq	& C(A,R_0)\int_{\mathbb{R}^2} \frac{1}{1+|y|}\left|\frac{1}{2} w +\frac{1}{2}y\cdot \nabla  w\right|\\
					\leq & \int_{\mathbb{R}^2} \frac{C(A,\beta, R_0)}{1+|y|^3} = C(A,\beta,R_0).
				\end{aligned}
			\end{equation*}
			Finally, using Lemma \ref{lem:Estimates for modified linear part} again yields
			\begin{equation*}
				\begin{aligned}
					&\int_{\mathbb{R}^2} \Delta w \cdot v_1 \leq  \int_{\mathbb{R}^2} \frac{C(A,\beta,R_0)}{1+|y|^5} = C(A,\beta, R_0).
				\end{aligned}
			\end{equation*}
			Hence, go back to \eqref{eq:enerid}, we have
			\begin{equation*}
				\int_{\mathbb{R}^2} |\nabla v_2|^2 dy + 2\int_{\mathbb{R}^2} \partial_i v_{2,j} \partial_i v_{1,j} dy=-\int_{\mathbb{R}^2} F(v_0,w) v_1\leq C(A,\beta, R_0).
			\end{equation*}
			This together with  Young's inequality and \eqref{eq:estv1} yields that
			\begin{equation}\label{eq:gradientest}
				\int_{\mathbb{R}^2} |\nabla v_2|^2 dy \leq 4 \int_{\mathbb{R}^2} |\nabla  v_1|^2dy-2\int_{\mathbb{R}^2} F(v_0,w) v_1\leq C(A,\beta, R_0),
			\end{equation}
			where we have used
			\begin{equation}\label{eq:Ditrichelten}
			\int_{\mathbb{R}^2} |\nabla  v_1|^2dy \leq \int_{\mathbb{R}^2}  \frac{C(A,\beta, R_0)}{1+|y|^{2+2\beta}} dy \leq C(A, \beta, R_0).
			\end{equation}
			This is our initial Dirichlet energy estimate. 
			
			\textbf{Step 2: Estimate of 	$\|v_2\|_{L^{p}}$.}
			We multiply the equation of $v_2$ \eqref{eq:eqfordiff} by $|v_2|^{p-2}v_2$, where $1<p<2$, and integrate on both sides, we have
			\begin{equation}\label{eq:muptiplier1}
				\begin{aligned}
					& \int_{\mathbb{R}^2} \left(	-\Delta v_2 -\frac{1}{2}v_2 -\frac{1}{2}y\cdot \nabla v_2+  v_1\cdot \nabla v_2 +v_2\cdot \nabla v_2  + v_2\cdot \nabla  v_1 +\nabla q_2 \right) \cdot |v_2|^{p-2}v_2\\
					= &-\int_{\mathbb{R}^2}  \left( v_1\cdot\nabla  v_1 - F(v_0,w) \right)\cdot  |v_2|^{p-2}v_2.	
				\end{aligned}
			\end{equation}
			We consider the left-hand side of \eqref{eq:muptiplier1} first.
            
            \textbf{Estimates for the first five terms on the left-hand side of \eqref{eq:muptiplier1}.}
            The estimates of the first five terms on the left-hand side of \eqref{eq:muptiplier1} are quite straightforward using integration by parts.
			For the first term, we have 
			\begin{equation*}
				\begin{aligned}
					&\int_{\mathbb{R}^2}	-\Delta v_2  \cdot |v_2|^{p-2}v_2 = \int_{\mathbb{R}^2}	-\Delta v_{2,i}   |v_2|^{p-2}v_{2,i}\\
					= &\int_{\mathbb{R}^2}	\nabla v_{2,i}  \cdot |v_2|^{p-2}\nabla v_{2,i} + \partial_j v_{2,i} (p-2) |v_2|^{p-3} \partial_j \sqrt{v_{2,1}^2+v_{2,2}^2}v_{2,i}\\
					=&\int_{\mathbb{R}^2} |\nabla v_2|^2 |v_2|^{p-2} + \partial_j v_{2,i} (p-2) |v_2|^{p-3} \frac{v_{2,1}\partial_j  v_{2,1} + v_{2,2}\partial_j  v_{2,2}  }{\sqrt{v_{2,1}^2+v_{2,2}^2}}v_{2,i}\\
					=&\int_{\mathbb{R}^2} |\nabla v_2|^2 |v_2|^{p-2} +   (p-2) |v_2|^{p-4} (v_{2,1}\partial_j  v_{2,1} + v_{2,2}\partial_j  v_{2,2}  ) v_{2,i}  \partial_j v_{2,i}\\
					=&\int_{\mathbb{R}^2} |\nabla v_2|^2 |v_2|^{p-2} +  \sum_{j=1}^{2}(p-2)|v_2|^{p-4}  (v_{2,i}  \partial_j v_{2,i})^2\\
					\geq & \int_{\mathbb{R}^2} |\nabla v_2|^2 |v_2|^{p-2} + (p-2) |v_2|^{p-2} |\nabla v_2|^2= \int_{\mathbb{R}^2} (p-1) |\nabla v_2|^2 |v_2|^{p-2},
				\end{aligned}
			\end{equation*}
			where  the condition $p<2$ guarantees  the second-to-last inequality holds.
			In above, $v_{2,i}$ is the $i$-th component of $v_2$ and $ \partial_j v_{2,i} = \frac{\partial  v_{2,i}}{\partial x_j}$ for $i,j=1,2$. Next, we have 
			\begin{equation*}\label{eq:est11}
				\begin{aligned}
					\int_{\mathbb{R}^2}	 -\frac{1}{2}y\cdot \nabla v_2\cdot |v_2|^{p-2}v_2
					&=\int_{\mathbb{R}^2}-\frac{1}{4}y\cdot \nabla |v_2|^2 (|v_2|^{2})^{\frac{p}{2}-1}\\
					&=\int_{\mathbb{R}^2}-\frac{1}{4}y\cdot \nabla  (|v_2|^{2})^{\frac{p}{2}}\frac{2}{p}\\
					&=\frac{1}{p} \int_{\mathbb{R}^2}  |v_2|^{p}.\\
				\end{aligned}
			\end{equation*}
		Using integration by parts and $\div v_1=0$ yields that
			\begin{equation*}\
				\begin{aligned}
					\int_{\mathbb{R}^2}	 v_1\cdot \nabla v_2\cdot |v_2|^{p-2}v_2
					&= \frac{1}{2}\int_{\mathbb{R}^2}	 v_1\cdot \nabla(|v_2|^{2})^{\frac{p}{2}}\frac{2}{p}=0.
				\end{aligned}
			\end{equation*}
			Similarly, one has
			\begin{equation*}\
				\begin{aligned}
					\int_{\mathbb{R}^2}	 v_2\cdot \nabla v_2\cdot |v_2|^{p-2}v_2
					=0.
				\end{aligned}
			\end{equation*}
			Combining all the above estimates gives
			\begin{equation}\label{eq:est1}
				\begin{aligned}
					&\int_{\mathbb{R}^2}	(-\Delta v_2 -\frac{1}{2}v_2 -\frac{1}{2}y\cdot \nabla v_2+  v_1\cdot \nabla v_2+v_2\cdot \nabla v_2 ) \cdot |v_2|^{p-2}v_2\\
					\geq& \int_{\mathbb{R}^2} (p-1)|\nabla v_2|^2 |v_2|^{p-2} + \left(\frac{1}{p}-\frac{1}{2}\right) |v_2|^{p}.
				\end{aligned}
			\end{equation}
			It is important to note that $p-1>0$ and $\frac{1}{p}-\frac{1}{2}>0$ provided $1<p<2$.

              \textbf{Estimates for the last two terms on left-hand side of \eqref{eq:muptiplier1}.}
			For the second last term on the left-hand side of \eqref{eq:muptiplier1}, we have
			\begin{equation}\label{eq:potentialterm}
				\begin{aligned}
					&\left|\int_{\mathbb{R}^2}	(v_2\cdot\nabla  v_1) \cdot |v_2|^{p-2}v_2\right|\leq \|\nabla  v_1\|_{L^{\infty}} \int_{\mathbb{R}^2} |v_2|^{p}.
				\end{aligned}
			\end{equation}
			To estimate the last term on the left-hand side of \eqref{eq:muptiplier1},	we first note that
			\begin{equation}
				\begin{aligned}
					-\Delta q_2 &= \textrm{ div } ( v_1\cdot \nabla v_2 + v_2\cdot \nabla  v_1 + v_2\cdot \nabla v_2 +  v_1\cdot\nabla v_1) -\div  F(v_0,w)\\
					&= \textrm{ div } \div (( v_1+v_2) \otimes( v_1+v_2) ),
				\end{aligned}
			\end{equation}
			since we have from  \eqref{eq:eqformodifv} that
			\begin{equation*}
				\div  F(v_0,w) = \div\left(-\Delta  v_1 -\frac{1}{2} v_1 -\frac{1}{2}y\cdot \nabla  v_1\right)=0,
			\end{equation*}
            where  $\div v_1=0$ has been used.
			It follows from the  Calderon–Zygmund estimates (\cite[Chapter V]{Stein93}) that there exists a constant $c(p)$ such that
			\begin{equation}\label{eq:potenest}
				\begin{aligned}
					\|\nabla q_2\|_{L^{p}}&\leq c(p)	  \left(\| v_1\nabla  v_1\|_{L^{p}}+\|v_2\nabla  v_1\|_{L^{p}}\right. \\
					&\qquad \qquad + \left. \| v_1 \nabla v_2\|_{L^{p}}+\|v_2\nabla v_2\|_{L^{p}} \right).
				\end{aligned}
			\end{equation}
			We now fix some $p\in(1,2)$,  let $\epsilon$ in Lemma \ref{lem:Estimates for modified linear part} be small enough, say
			\begin{equation*}
				\epsilon \leq \min\left\{\frac{1}{10c(p)},\frac{1}{10}\right\}
				\cdot\left(\frac{1}{p}-\frac{1}{2}\right),
			\end{equation*}
			by choosing 
			\begin{equation*}
				R_0=R_0(A,\beta, \epsilon)  =R_0(A, \beta, p) 
			\end{equation*}
			in \eqref{eq:defoftildev} large enough.
			It follows that 
			\begin{equation*}
				\| v_1\|_{L^\infty}, ~	\|\nabla v_1\|_{L^\infty}\leq \epsilon \leq \min\left\{\frac{1}{10c(p)},\frac{1}{10}\right\}
				\cdot\left(\frac{1}{p}-\frac{1}{2}\right).
			\end{equation*}
			 We also have from \eqref{eq:gradientest} that
			\begin{equation*}
				\int_{\mathbb{R}^2} |\nabla v_2|^2 dy \leq 4 \int_{\mathbb{R}^2} |\nabla  v_1|^2dy-2\int_{\mathbb{R}^2} F(v_0,w) v_1\leq C(A,\beta, p).
			\end{equation*}
			We  can now estimate the last term on the left-hand side of \eqref{eq:muptiplier1} using \eqref{eq:potenest} as follows:
			\begin{equation}\label{eq:pressterm}
				\begin{aligned}
					& \quad \int_{\mathbb{R}^2} (\nabla q_2) \cdot |v_2|^{p-2} v_2 \le \|v_2\|_{L^{p}}^{p-1} \|\nabla q_2\|_{L^{p}} \\
					& \le c(p)\|v_2\|_{L^{p}}^{p-1} \left( \| v_1 \nabla  v_1\|_{L^{p}} + \|v_2 \nabla  v_1\|_{L^{p}} + \| v_1 \nabla v_2\|_{L^{p}} + \|v_2 \nabla v_2\|_{L^{p}}  \right) \\
					& \le c(p)\|v_2\|_{L^{p}}^{p-1} \left( C(A,\beta, p) + \frac{1}{10c(p)} \left(\frac{1}{p}-\frac{1}{2}\right) \|v_2\|_{L^{p}} + \| v_1\|_{L^{\frac{2p}{2-p}}} \|\nabla v_2\|_{L^2} + \|\nabla v_2\|_{L^2} \|v_2\|_{L^{\frac{2p}{2-p}}} \right) \\
					& \le c(p)\|v_2\|_{L^{p}}^{p-1} \left( C(A,\beta, p) + \frac{1}{10c(p)} \left(\frac{1}{p}-\frac{1}{2}\right)\|v_2\|_{L^{p}} + \| v_1\|_{L^{\frac{2p}{2-p}}} \|\nabla v_2\|_{L^2} +c_1(p) \|\nabla v_2\|_{L^2}^{1+\frac{p}{2}} \|v_2\|_{L^{p}}^{1-\frac{p}{2}} \right) \\
					& \leq c(p) \|v_2\|_{L^{p}}^{p-1} \left( C(A,\beta, p) + \frac{1}{10c(p)}\left(\frac{1}{p}-\frac{1}{2}\right) \|v_2\|_{L^{p}} + C(A,\beta, p) + C(A,\beta, p) \|v_2\|_{L^{p}}^{1-\frac{p}{2}} \right) \\
					& \le C(A,\beta, p) + \frac{1}{5} \left(\frac{1}{p}-\frac{1}{2}\right) \|v_2\|_{L^{p}}^{p},
				\end{aligned}
			\end{equation}
			where we have used Gagliardo–Nirenberg interpolation inequality 
			\begin{equation*}
				\|v_2\|_{L^{\frac{2p}{2-p}}(\mathbb{R}^2)} \leq c_1(p)	\|\nabla v_2\|_{L^{2}(\mathbb{R}^2)}^{\frac{p}{2}} 	\|v_2\|_{L^{p}(\mathbb{R}^2)}^{1-\frac{p}{2}}, ~ 1<p<2,
			\end{equation*}
			and also the fact that for any $1<p<2$,
			\begin{equation*}
				\| v_1\|_{L^{\frac{2p}{2-p}}} \leq C(A,\beta, p), ~	| v_1\nabla  v_1|\leq \frac{C(A,\beta, R_0)}{1+|y|^2}=\frac{C(A,\beta, p)}{1+|y|^2}, ~\| v_1 \nabla  v_1\|_{L^{p}}\leq C(A,\beta, p),
			\end{equation*}
			which follows from Lemma \ref{lem:Estimates for modified linear part}.
			Equation \eqref{eq:potentialterm} becomes 
			\begin{equation}\label{eq:potentialterm1}
				\begin{aligned}
					&\left|\int_{\mathbb{R}^2}	(v_2\cdot\nabla  v_1) \cdot |v_2|^{p-2}v_2\right|\leq \frac{1}{10} \left(\frac{1}{p}-\frac{1}{2}\right) \int_{\mathbb{R}^2} |v_2|^{p}.
				\end{aligned}
			\end{equation}
            
              \textbf{Estimates for the right-hand side of \eqref{eq:muptiplier1}.}
			We now turn to estimate  the right-hand side of \eqref{eq:muptiplier1}.
			We  claim that for any $1<p<2$,
			\begin{equation}\label{eq:forcetermest}
				\begin{aligned}
					\|F(v_0,w)	\|_{L^{p}}\leq C(A,\beta, p).
				\end{aligned}
			\end{equation}
			Indeed,  it follows from  Lemma \ref{lem:Estimates for modified linear part}  that for any $1<p<2$, one has
			\begin{equation*}
				\int_{\mathbb{R}^2} |\Delta w|^{p} \\
				\leq C(A,\beta, p),
			\end{equation*}
			and 	that
			\begin{equation*}
				\left|2 \nabla \eta_{R_0}\nabla 	v_0+\Delta \eta_{R_0}	v_0   +\frac{1}{2}y\cdot \nabla \eta_{R_0}	 v_0  + \frac{1}{2} w + \frac{1}{2}y\cdot \nabla  w\right|\leq \frac{C(A,\beta, p)}{1+|y|^2}.
			\end{equation*}
           Hence,  using \eqref{eq:eqformodifv}, we have for any $1<p<2$ that
			\begin{equation*}\label{eq:forceterms}
				\begin{aligned}
	\int_{\mathbb{R}^2}	|F(v_0,w)|^{p}
					\leq & C\int_{\mathbb{R}^2} \left|2 \nabla \eta_{R_0}\nabla 	v_0+\Delta \eta_{R_0}	v_0   +\frac{1}{2}y\cdot \nabla \eta_{R_0}	 v_0  + \frac{1}{2} w + \frac{1}{2}y\cdot \nabla  w\right|^{p} 
					 	+
					\int_{\mathbb{R}^2} |\Delta w|^{p}\\
                    \leq &
					C(A,\beta, p),
				\end{aligned}
			\end{equation*}	
		Now we can conclude by using \eqref{eq:forcetermest} that
			\begin{equation}\label{eq:force}
				\begin{aligned}
					&\quad	\int_{\mathbb{R}^2}	( v_1\cdot\nabla  v_1 - F(v_0,w))\cdot |v_2|^{p-2}v_2 \\
					&\leq \int_{\mathbb{R}^2}	 C| v_1\cdot\nabla  v_1|^{p} + C | F(v_0,w)|^{p} + \frac{1}{10}\left(\frac{1}{p}-\frac{1}{2}\right)|v_2|^{p}  \\
					& \leq C(A,\beta, p)+  \frac{1}{10}\left(\frac{1}{p}-\frac{1}{2}\right)\int_{\mathbb{R}^2} |v_2|^{p}.
				\end{aligned}
			\end{equation}
			Combining  \eqref{eq:est1}, \eqref{eq:pressterm}, \eqref{eq:potentialterm1} and \eqref{eq:force}, we have 
			\begin{equation*}
				\int_{\mathbb{R}^2} |\nabla v_2|^2 |v_2|^{p-2} +  |v_2|^{p} \leq C(A,\beta, p).
			\end{equation*}
			The application of  the Gagliardo–Nirenberg interpolation inequality  for any fixed $1<p<2$ also gives that
			\begin{equation*}
				\|v_2\|_{L^{2}(\mathbb{R}^2)} \leq 	\|v_2\|_{L^{p}(\mathbb{R}^2)}^{\frac{p}{2}} 	\|\nabla v_2\|_{L^{2}(\mathbb{R}^2)}^{1-\frac{p}{2}} \leq C(A,\beta, p).
			\end{equation*}					
			This finishes the proof.
		\end{proof}
		
		\begin{remark}
			Throughout the proof, the finiteness of the  Dirichlet energy  of $v_1$ (or equivalently Dirichlet energy  of $v_0$) \eqref{eq:Ditrichelten} is the only place where we require the H\"older continuity exponent $\beta>0$. 
		\end{remark} 
        We indeed have proved $H^1$-estimate \eqref{eq:energyest} and $L^p$-estimate of $v_{\textrm{re}}$ when $1<p\leq 2$. To prove the rest of 
Theorem \ref{thm:energyestkey}, we use the $H^2$-estimate of $v_{\text{re}}$ to be proved in Proposition \ref{pro:H2est}. The proof of 
Proposition \ref{pro:H2est} only needs $H^1$-estimate \eqref{eq:energyest}  of $v_{\textrm{re}}$.
		\begin{proof}[Proof of Theorem \ref{thm:energyestkey} ]
			Using the standard bootstrapping argument, along
			with the regularity estimates of the Stokes equations, one can show that $v_{\text{re}}\in H^1(\mathbb{R}^2)$ is indeed smooth. For example, one can refer to \cite[Theorem 3.8]{Yang26} for a detailed presentation of the bootstrapping argument.
			This also implies the smoothness of $u(x,1)$ as $e^{\Delta} u_0$ is already smooth.	We next prove the corresponding estimates for $v_{\text{re}}$.	Based on the definitions of $v_{\textrm{re}}$ and $v_2$ in \eqref{eq:deforre} and \eqref{eq:modre} respectively, we have
			\begin{equation*}
				v_{\textrm{re}} - v_2 =v_1 -v_0 = (\eta_{R_0}-1)v_0 +w.
			\end{equation*}
			It then follows from Lemmas \ref{lem:lineardecay} and  \ref{lem:Estimates for modified linear part} and 
			Theorem \ref{thm:energyest} that for $1<p<2$, 
			\begin{equation}\label{eq:remainest}
				\|v_{\text{re}} \|_{L^{p}(\mathbb{R}^2)}\leq C(A,\beta, p).
			\end{equation}
			and  
			\begin{equation*}
				\|v_{\text{re}} \|_{H^1(\mathbb{R}^2)}\le C(A,\beta).
			\end{equation*}
            
            To show higher integrability, we show that the $H^1$-estimates for $v_{\textrm{re}}$ can be improved to $H^2(\mathbb{R}^2)$, which will be done in Proposition \ref{pro:H2est} below.
			It follows that
			\begin{equation}\label{eq:Linf}
				\|v_{\text{re}} \|_{L^{\infty}(\mathbb{R}^2)}\le C(A,\beta).
			\end{equation}
			Now, \eqref{eq:Linf} combines with  \eqref{eq:remainest} and the interpolation inequality implies that for ant $p>1$
			\begin{equation*}
				\|v_{\text{re}} \|_{L^{p}(\mathbb{R}^2)}\leq C(A,\beta, p).
			\end{equation*}
			Using the above and noting that
			\begin{equation*}
				\Delta 	q = \operatorname{div} (v_{\text{re}} \cdot \nabla v_{\text{re}} + v_0 \cdot \nabla v_{\text{re}} + v_{\text{re}} \cdot \nabla v_0 + v_0 \cdot \nabla v_0)= \textrm{ div } \div (( v_0 +v_{\text{re}} ) \otimes( v_0 +v_{\text{re}} ) ),
			\end{equation*}
			the  Calderon–Zygmund  estimates and the estimates for $v_0$ and $v_{\text{re}}$ give that for any $p>1$ 
			\begin{equation*}
				\|q \|_{L^{p}(\mathbb{R}^2)}\leq C(A,\beta, p).
			\end{equation*}
			Similarly, the Calderon–Zygmund  estimates \eqref{eq:potenest}, together with 
            the above estimates of $v_0$ and $v_{\text{re}}$,
            yield that for any $p>1$ 
			\begin{equation*}
				\|\nabla q \|_{L^{p}(\mathbb{R}^2)}\le C(A,\beta,p).
			\end{equation*}
            This finishes the proof of Theorem \ref{thm:energyestkey}.
		\end{proof}


		\section{Weighted Energy Estimates and Pointwise Decay}\label{sec:pointwise}
		In this section, we establish the weighted energy estimate and the pointwise decay estimates for the remainder part $v_{\text{re}}$. For the three-dimensional case, Jia and \v{S}ver\'{a}k \cite{Jia14} first utilized localized smoothing estimates for local Leray solutions to derive pointwise decay. Here, we adopt the 
		methods developed in  \cite{Lai19}. We first give the weighted energy estimates that $\sqrt{1+|y|^2}v_{\text{re}}$ in $H^2(\mathbb{R}^2)$. This gives a first pointwise decay via Sobolev embeddings, i.e.,
		\begin{equation*}
			|v_{\text{re}} (y)| \leq \frac{C}{	1 + |y| } \textrm{ for all } y \in \mathbb{R}^2.
		\end{equation*}
		With this decay estimate in hand, we can improve the estimates  by  using the linear theory of Stokes and the heat equation to have that
		\begin{equation*}
			|v_{\text{re}} (y)| \leq \frac{C}{	1 + |y|^2}\textrm{ for all } y \in \mathbb{R}^2.
		\end{equation*}
		Furthermore, we  prove the optimal decay for $v_{\text{re}}$ if the initial data $u_0\in C^2_{loc}$.

		In light of the global energy estimates established in Theorem \ref{thm:energyestkey}, we do not need to work for $v_{2}$ and $v_1$ anymore,
		which are modified versions of $v_{\text{re}}$ and $v_0$. Hence, in this section, we directly work on $v_{\text{re}}$ and $v_0$. 
		For convenience, we recall that the governing equation for the remainder $v_{\text{re}}$ is given by:
		\begin{equation}\label{eq:differencereca}
			\left\{
			\begin{aligned}
				&-\Delta v_{\text{re}} -\frac{1}{2}v_{\text{re}} -\frac{1}{2}y\cdot \nabla v_{\text{re}}+  v_0\cdot \nabla v_{\text{re}} + v_{\text{re}}\cdot \nabla  v_0 + v_{\text{re}}\cdot \nabla v_{\text{re}} +\nabla q=- v_0\cdot\nabla  v_0, \\
				&  \div v_{\text{re}}=0,
			\end{aligned}
			\right. \textrm{ on } \mathbb{R}^2.
		\end{equation}
		And we also recall that $v_0$ is defined in \eqref{eq:defofv1} and solves \eqref{eq:equforv1}.		
		\subsection{Weighted $H^1$-Estimates of $v_{\text{re}}$}
		As a first step towards the higher-order estimates, we establish a weighted $H^1$ bound for the remainder term $v_{\text{re}}$. It can be seen in the proof that the convection term $-\frac{1}{2}y\cdot \nabla v_{\text{re}}$ plays an important role.
		\begin{proposition}[Weighted $H^1$ Estimate] \label{prop:weighted_H1est}
            Under the same assumption of Theorem \ref{thm:energyestkey},
			consider the corresponding solution $(v_{\text{re}}, q) \in X \times L^2(\mathbb{R}^2)$  to
			\eqref{eq:differencereca}, then 
			we have
			\begin{equation} \label{eq:weighted_H1_bound}
				\| \sqrt{1+|y|^2} v_{\text{re}} \|_{H^1(\mathbb{R}^2)} \le C(A,\beta).
			\end{equation}
		\end{proposition}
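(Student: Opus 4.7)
The plan is to test equation \eqref{eq:differencereca} against the multiplier $|y|^2 v_{\text{re}}$, after inserting a smooth radial cutoff $\phi_R(y)=\phi(|y|/R)$ to justify all integrations by parts, and then to let $R\to\infty$ (the $X$-norm of $v_{\text{re}}$ together with the $L^p$-bounds from Theorem \ref{thm:energyestkey} guarantee the needed integrability in the limit, but the constants in the final estimate depend only on $A$ and $\beta$). The algebra is arranged so that the drift terms produce the coercive quantity $\tfrac{1}{2}\int |y|^2|v_{\text{re}}|^2\,dy$ while the Laplacian yields $\int |y|^2 |\nabla v_{\text{re}}|^2\,dy$, modulo a harmless $-2\int |v_{\text{re}}|^2\,dy$ absorbed by \eqref{eq:energyest}. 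The coercive computation uses the identity $\partial_j(|y|^2 y_j)=4|y|^2$ in $\mathbb{R}^2$, which gives
\begin{equation*}
\int \bigl(-\tfrac{1}{2} v_{\text{re}} - \tfrac{1}{2} y\cdot \nabla v_{\text{re}}\bigr)\cdot |y|^2 v_{\text{re}} \,dy = -\tfrac{1}{2}\int |y|^2 |v_{\text{re}}|^2\,dy + \tfrac{1}{4}\int 4|y|^2 |v_{\text{re}}|^2\,dy = \tfrac{1}{2}\int|y|^2|v_{\text{re}}|^2\,dy.
\end{equation*}

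Each of the remaining terms should be dominated by $\delta \int|y|^2|v_{\text{re}}|^2\,dy + C_\delta(A,\beta)$ for a small parameter $\delta>0$ to be absorbed. The convection $v_0\cdot\nabla v_{\text{re}}$ tested against $|y|^2 v_{\text{re}}$, after using $\operatorname{div}v_0=0$, reduces to $-\int|v_{\text{re}}|^2 (v_0\cdot y)\,dy$, controlled by $C\|v_{\text{re}}\|_{L^2}^2$ since $|v_0\cdot y|\le C$ by Lemma \ref{lem:lineardecay}. The nonlinear term $v_{\text{re}}\cdot\nabla v_{\text{re}}$ becomes $-\int|v_{\text{re}}|^2\,v_{\text{re}}\cdot y\,dy$; by Cauchy--Schwarz it is bounded by $\|v_{\text{re}}\|_{L^4}^2\||y|v_{\text{re}}\|_{L^2}$, and the 2D Gagliardo--Nirenberg inequality together with \eqref{eq:energyest} gives $\|v_{\text{re}}\|_{L^4}^2\le C$, so Young's inequality suffices. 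The pressure contribution becomes $-2\int q\,(v_{\text{re}}\cdot y)\,dy$, handled by the $L^2$-bound on $q$ from Theorem \ref{thm:energyestkey}. The forcing $v_0\cdot\nabla v_0$ is estimated via Cauchy--Schwarz using that $\||y|\,v_0\cdot\nabla v_0\|_{L^2}<\infty$ by the pointwise decay in Lemma \ref{lem:lineardecay}.

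The main technical hurdle is the interaction term $\int (v_{\text{re}}\cdot\nabla v_0)\cdot|y|^2 v_{\text{re}}\,dy$, since $|y|^2|\nabla v_0|$ is not uniformly bounded when $\beta<1$ (it grows like $|y|^{1-\beta}$). The key rescue is that the \emph{squared} weight satisfies $|y|^2|\nabla v_0|^2\le C(A,\beta)$ for every $\beta>0$, because $|\nabla v_0|\lesssim (1+|y|)^{-(1+\beta)}$ by Lemma \ref{lem:lineardecay}. Consequently, Young's inequality gives
\begin{equation*}
\int |\nabla v_0|\,|y|^2 |v_{\text{re}}|^2\,dy \le \delta \int |y|^2|v_{\text{re}}|^2\,dy + \frac{1}{4\delta}\int |y|^2|\nabla v_0|^2 |v_{\text{re}}|^2\,dy \le \delta\!\int \!|y|^2|v_{\text{re}}|^2\,dy + \frac{C(A,\beta)}{4\delta}\|v_{\text{re}}\|_{L^2}^2,
\end{equation*}
which closes the estimate. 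Absorbing all the $\delta$-terms into the coercive side and using \eqref{eq:energyest} yield $\int |y|^2|v_{\text{re}}|^2 + |y|^2|\nabla v_{\text{re}}|^2\,dy\le C(A,\beta)$. The bound \eqref{eq:weighted_H1_bound} follows by expanding $\nabla(\sqrt{1+|y|^2}\,v_{\text{re}})$ via the product rule and combining with the $H^1$-estimate already established.
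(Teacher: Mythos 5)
Your proposal is correct and follows essentially the same route as the paper: test with $|y|^2 v_{\text{re}}$ (suitably regularized), extract the coercive term $\tfrac12\int|y|^2|v_{\text{re}}|^2$ from the drift via $\nabla\cdot(y|y|^2)=4|y|^2$, and absorb all other contributions using the $H^1$-, $L^2$-pressure and decay bounds already available. The only (harmless) deviation is the interaction term $\int(v_{\text{re}}\cdot\nabla v_0)\cdot|y|^2v_{\text{re}}$, which the paper handles by integrating by parts onto $v_0$ so that only $|v_0|\lesssim(1+|y|)^{-1}$ is needed, whereas you keep the derivative on $v_0$ and exploit $|y|^2|\nabla v_0|^2\le C(A,\beta)$ — both close the estimate.
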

		
		\begin{proof}
        Throughout this proof, we use $C$ to denote a constant depending on $A$ and $\beta$ that may be different from line to line.
			We first note that
			\begin{equation*}
				|\nabla ( \sqrt{1+|y|^2} v_{\text{re}}) |\leq \frac{|y|}{\sqrt{1+|y|^2}} |v_{\text{re}}| + \sqrt{1+|y|^2} |\nabla v_{\text{re}}|.
			\end{equation*}
			Hence it holds that
			\begin{equation*}
				|\nabla ( \sqrt{1+|y|^2} v_{\text{re}}) |^2 \leq 2|v_{\text{re}}|^2 +  2(1+|y|^2)|\nabla v_{\text{re}}|^2.
			\end{equation*}
			As we  already have the $H^1$-estimates of $v_{\text{re}}$ due to Theorem \ref{thm:energyestkey}, it suffices to prove the $H^1$-estimates of $|y|v_{\text{re}}$. The key is to  test \eqref{eq:differencereca} with $\phi = |y|^2 v_{\text{re}}$.
			
			For a more rigorous derivation, one can use a regularized weight $h_\epsilon(y) = \frac{|y|}{(1+\epsilon|y|^2)^{1/2}}$ and 
			test \eqref{eq:differencereca} with $\phi = |h_\epsilon|^2 v_{\text{re}}$, then
			pass to the limit $\epsilon \to 0$. For the sake of clarity, we present the estimate using the weight $h(y) = |y|$ and the proof using the weight $h_\epsilon(y)$ follows analogously with minor modification. 
			
			\textbf{Step 1: The Dissipation Term.}
			We calculate the term $-\int \Delta v_{\text{re}} \cdot (|y|^2 v_{\text{re}})$. We have by direct calculations that
			\begin{align*}
				-\int_{\mathbb{R}^2} \Delta v_{\text{re}} \cdot (|y|^2 v_{\text{re}}) \, dy 
				&= \int_{\mathbb{R}^2} \nabla v_{\text{re}} : \nabla(|y|^2 v_{\text{re}}) \, dy \\
				&= \int_{\mathbb{R}^2} \nabla v_{\text{re}} : (|y|^2 \nabla v_{\text{re}} + 2y \otimes v_{\text{re}}) \, dy \\
				&= \int_{\mathbb{R}^2} |y|^2 |\nabla v_{\text{re}}|^2 \, dy + \int_{\mathbb{R}^2} y \cdot \nabla(|v_{\text{re}}|^2) \, dy \\
				&= \int_{\mathbb{R}^2} |y|^2 |\nabla v_{\text{re}}|^2 \, dy - \int_{\mathbb{R}^2} (\nabla \cdot y) |v_{\text{re}}|^2 \, dy \\
				&= \| |y| \nabla v_{\text{re}} \|_{L^2}^2 - 2 \|v_{\text{re}}\|_{L^2}^2\\
                &\geq \| |y| \nabla v_{\text{re}} \|_{L^2}^2-C,
			\end{align*}
            where we have used \eqref{eq:energyest}.

			\textbf{Step 2: The Drift and Linear Damping Terms.}
			The linear drift and  damping part of the operator is $\mathcal{L}(v_{\text{re}}) = -\frac{1}{2}v_{\text{re}} - \frac{1}{2} y \cdot \nabla v_{\text{re}}$. Testing this with $|y|^2 v_{\text{re}}$, we compute that
			\begin{equation}
				I_{drift} = \int_{\mathbb{R}^2} \left( -\frac{1}{2} v_{\text{re}} - \frac{1}{2} y \cdot \nabla v_{\text{re}} \right) \cdot (|y|^2 v_{\text{re}}) \, dy.
			\end{equation}
			The first part is simply $-\frac{1}{2} \int |y|^2 |v_{\text{re}}|^2 dy$. For the second part, we use integration by parts. Note that $\nabla v_{\text{re}} \cdot v_{\text{re}} = \frac{1}{2} \nabla (|v_{\text{re}}|^2)$. Thus,
			\begin{align*}
				-\frac{1}{2} \int_{\mathbb{R}^2} (y \cdot \nabla v_{\text{re}}) \cdot (|y|^2 v_{\text{re}}) \, dy 
				&= -\frac{1}{4} \int_{\mathbb{R}^2} y \cdot \nabla (|v_{\text{re}}|^2) |y|^2 \, dy \\
				&= \frac{1}{4} \int_{\mathbb{R}^2} |v_{\text{re}}|^2 \nabla \cdot (y |y|^2) \, dy.
			\end{align*}
			In 2D, we calculate the divergence explicitly to get
			\begin{equation*}
			\nabla \cdot (y |y|^2) = (\nabla \cdot y)|y|^2 + y \cdot \nabla(|y|^2) = 2|y|^2 + y \cdot (2y) = 2|y|^2 + 2|y|^2 = 4|y|^2.    
			\end{equation*}
			Substituting this back, we have
			\begin{equation*}
			    -\frac{1}{2} \int_{\mathbb{R}^2} (y \cdot \nabla v_{\text{re}}) \cdot (|y|^2 v_{\text{re}}) \, dy = \frac{1}{4} \int_{\mathbb{R}^2} |v_{\text{re}}|^2 (4|y|^2) \, dy = \int_{\mathbb{R}^2} |y|^2 |v_{\text{re}}|^2 \, dy.
			\end{equation*}
			Combining with the damping term $-\frac{1}{2} v_{\text{re}}$ yields
			\begin{equation} \label{eq:drift_coercivity}
				I_{drift} = -\frac{1}{2} \int_{\mathbb{R}^2} |y|^2 |v_{\text{re}}|^2 + \int_{\mathbb{R}^2} |y|^2 |v_{\text{re}}|^2 = \frac{1}{2} \int_{\mathbb{R}^2} |y|^2 |v_{\text{re}}|^2 \, dy.
			\end{equation}
			This positivity is specific to the special structure  of the Leray equations and ensures the weighted $L^2$ norm is controlled.

			\textbf{Step 3: Interaction Terms involving $v_0$.}
			We estimate the term involving $v_0$.   We have
			\begin{equation*}
				\begin{aligned}
					I_{int} &= \int_{\mathbb{R}^2} (v_0\cdot\nabla v_{\text{re}} + v_{\text{re}} \cdot \nabla v_0) \cdot (|y|^2 v_{\text{re}}) dy\\
					&= -\int_{\mathbb{R}^2} v_0\cdot y|v_{\text{re}}|^2+v_{\text{re}}\cdot\nabla (|y|^2v_{\text{re}})\cdot v_0 dy.
				\end{aligned}
			\end{equation*}
			By Lemma \ref{lem:lineardecay}, $| v_0(y)| \le C(1+|y|)^{-1}$.
			Hence, together with \eqref{eq:energyest}, we have
            \begin{equation*}
				\begin{aligned}
					|I_{int}| &\leq \int_{\mathbb{R}^2} |v_0\cdot y|  \cdot|v_{\text{re}}|^2 + 10|v_{\text{re}}| \cdot|y|\cdot|v_{\text{re}}| \cdot|v_0| + 10|v_{\text{re}}|\cdot|y|^2 \cdot |\nabla v_{\text{re}}|\cdot |v_0|\\
					& \leq C \int_{\mathbb{R}^2}   |v_{\text{re}}|^2 + 10|v_{\text{re}}| \cdot |y| \cdot |\nabla v_{\text{re}}|\\
					&\leq C\int_{\mathbb{R}^2}   (|v_{\text{re}}|^2 +  |\nabla v_{\text{re}}|^2) dy  + \frac{1}{10}\int_{\mathbb{R}^2}  |y|^2| v_{\text{re}}|^2 dy\\
					&\leq C + \frac{1}{10}\int_{\mathbb{R}^2}  |y|^2| v_{\text{re}}|^2 dy.
				\end{aligned}
			\end{equation*}

			\textbf{Step 4: Nonlinear term $v_{\text{re}}\cdot \nabla v_{\text{re}}$ and pressure term.}
			For the nonlinear term $v_{\text{re}}\cdot \nabla v_{\text{re}}$, we have similarly as above that
			\begin{equation*}
				\begin{aligned}
					\int_{\mathbb{R}^2} (v_{\text{re}}\cdot \nabla v_{\text{re}})\cdot |y|^2 v_{\text{re}}& = 	\frac{1}{2}\int_{\mathbb{R}^2}|y|^2   v_{\text{re}}\cdot \nabla |v_{\text{re}}|^2 = -	\int_{\mathbb{R}^2}(y\cdot v_{\text{re}}) |v_{\text{re}}|^2\\
					&\leq  C  \int_{\mathbb{R}^2}  | v_{\text{re}}|^4 dy + \frac{1}{10}\int_{\mathbb{R}^2}  |y|^2| v_{\text{re}}|^2 dy\\
					&\leq C  \|v_{\text{re}}\|_{H^1(\mathbb{R}^2)}^4 + \frac{1}{10}\int_{\mathbb{R}^2}  |y|^2| v_{\text{re}}|^2 dy\\
					&\leq  C + \frac{1}{10}\int_{\mathbb{R}^2}  |y|^2| v_{\text{re}}|^2 dy.
				\end{aligned}
			\end{equation*}
			
			We have
			\begin{equation*}
				\| q\|_{L^{2}}\leq C	  \left(\|v_0 \|_{L^{4}} + \|v_{\text{re}}\|_{L^{4}}
				\right)^2 \leq C	  \left(\|v_0 \|_{L^{4}} + \|v_{\text{re}}\|_{H^{1}}
				\right)^2  \leq C.
			\end{equation*}
			Hence, using H\"older inequality, we have
			\begin{equation*}
				\begin{aligned}
					\int_{\mathbb{R}^2} \nabla q \cdot |y|^2 v_{\text{re}}& = 	-\int_{\mathbb{R}^2} 2q v_{\text{re}}\cdot y\leq \frac{1}{10}\int_{\mathbb{R}^2} |y|^2|v_{\text{re}}|^2 + C.
				\end{aligned}
			\end{equation*}
			
			\textbf{Step 5: The source terms.}
            Using Lemma \ref{lem:lineardecay} and \eqref{eq:energyest},
            we have
			\begin{equation*}
				\begin{aligned}
					\quad \int_{\mathbb{R}^2} (-v_0\cdot \nabla v_0 ) \cdot |y|^2 v_{\text{re}}
					& \leq C \int_{\mathbb{R}^2} |\nabla v_0|^2 
					+ \frac{1}{10} \int_{\mathbb{R}^2}  |y|^2|v_{\text{re}}|^2 \\
					& \leq C+  \frac{1}{10} \int_{\mathbb{R}^2}  |y|^2|v_{\text{re}}|^2,
				\end{aligned}
			\end{equation*}
			where we have used that $\int_{\mathbb{R}^2} |\nabla v_0|^2  \leq C$.

			Combining Steps 1-5 gives
			\begin{equation*}
				\begin{aligned}
				\|\nabla(|y|  v_{\text{re}})\|_{L^2}^2 +  \| |y| v_{\text{re}} \|_{L^2}^2\leq 	\||y| \nabla v_{\text{re}}\|_{L^2}^2 +  \| |y| v_{\text{re}} \|_{L^2}^2 + \| v_{\text{re}} \|_{L^2}^2 \leq C.
				\end{aligned}
			\end{equation*}
			This proves the proposition.
		\end{proof}
		
		\subsection{Weighted $H^2$-Estimates of $v_{\textnormal{re}}$}
		With the weighted $H^1$ bounds established in Proposition \ref{prop:weighted_H1est}, we now seek higher-order regularity for the remainder $v_{\text{re}}$. In the two-dimensional setting, $H^1$ regularity is insufficient to ensure a pointwise estimate. And it is necessary to establish estimates in the weighted Sobolev space $H^2$. The main goal of this section is to prove the $H^2$-estimate for $\sqrt{1+|y|^2}v_{\text{re}}$, which implies that $\sqrt{1+|y|^2}v_{\text{re}}(y)$ is bounded.
		We begin with a lemma on  the improved regularity of a general weak solution in the usual Sobolev space.
		
		\begin{lemma}[$H^2$-estimates]\label{eq:H1est}
			Let $f(y) \in L^{2}(\mathbb{R}^{2})$ be given. Assume that $(V, P)$ is a weak solution to the following system:
			\begin{equation*}
            \left\{
				\begin{aligned}
				    &-\Delta V + V\cdot \nabla V - \dfrac{1}{2}(y \cdot \nabla V + V) + \nabla P = f  , \\
					&\operatorname{div} V = 0,           
				\end{aligned}
                  \right. \textrm{ in } \mathbb{R}^{2}.
			\end{equation*}
			Specifically, $V \in H^1(\mathbb{R}^{2})$, $|y|V(y) \in L^{2}(\mathbb{R}^{2})$, $P \in L^{2}(\mathbb{R}^{2})$, and for all vector fields $\varphi \in H^{1}(\mathbb{R}^{2})$ with $|y|\varphi(y)\in L^2(\mathbb{R}^2)$, the following identity holds:
			\begin{equation}\label{eq:weak}
				\begin{aligned}
					&\int_{\mathbb{R}^{2}} \nabla V : \nabla \varphi \, dy - \frac{1}{2} \int_{\mathbb{R}^{2}} (y \cdot \nabla V + V) \cdot \varphi \, dy \\
					=& \int_{\mathbb{R}^{2}} P \operatorname{div} \varphi \, dy + \int_{\mathbb{R}^{2}} V\cdot \nabla \varphi \cdot V \, dy + \int_{\mathbb{R}^{2}} f \cdot \varphi \, dy.
				\end{aligned} \tag{3.45}
			\end{equation}
			Then there exists a universal positive constant $C$ such that
			\begin{equation*}
				\|V\|_{H^{2}(\mathbb{R}^{2})} \leq C \left( \| V\|_{H^{1}(\mathbb{R}^{2})} + \|\nabla V\|_{L^{2}(\mathbb{R}^{2})}^{2} \| V\|_{L^{2}(\mathbb{R}^{2})}^{\frac{1}{2}} + \|f\|_{L^{2}(\mathbb{R}^{2})} \right).
			\end{equation*}
		\end{lemma}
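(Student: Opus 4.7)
The plan is to treat the system as a Stokes equation with right-hand side,
$$-\Delta V+\nabla P = f+\tfrac{1}{2}(y\cdot\nabla V+V)-V\cdot\nabla V,\qquad \mathrm{div}\,V=0,$$
and to derive the $H^{2}$ bound by pairing the equation with $-\Delta V$, taking advantage of a cancellation specific to two dimensions. Because $\Delta V$ is divergence-free (as $V$ is), the pressure contribution $\int\nabla P\cdot(-\Delta V)$ vanishes after integration by parts, so the principal term on the left becomes $\|\Delta V\|_{L^{2}}^{2}$, which on $\mathbb{R}^{2}$ is comparable to $\|D^{2}V\|_{L^{2}}^{2}$ for divergence-free fields with the requisite decay.

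To make this rigorous, I would first upgrade the weak solution to $V\in H^{2}_{\mathrm{loc}}(\mathbb{R}^{2})$ via interior Stokes regularity. In two dimensions $H^{1}\hookrightarrow L^{q}_{\mathrm{loc}}$ for every finite $q$, so $V\cdot\nabla V\in L^{p}_{\mathrm{loc}}$ for each $p<2$; combined with $f\in L^{2}$, $P\in L^{2}$, and the drift $\tfrac{1}{2}(y\cdot\nabla V+V)\in L^{2}_{\mathrm{loc}}$, classical Cattabriga--Solonnikov interior estimates yield $V\in H^{2}_{\mathrm{loc}}$. I would then test \eqref{eq:weak} against $\varphi_{R}:=-\chi_{R}^{2}\Delta V$, where $\chi_{R}$ is a smooth radial cutoff equal to one on $B_{R}$ and supported in $B_{2R}$, and pass to the limit $R\to\infty$.

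The crucial computations in the limit are the following. The damping term gives $-\tfrac{1}{2}\int V\cdot(-\Delta V)=-\tfrac{1}{2}\|\nabla V\|_{L^{2}}^{2}$. The rotational drift contributes nothing, thanks to the two-dimensional identity
$$\int_{\mathbb{R}^{2}}(y\cdot\nabla V)\cdot\Delta V\,dy = 0,$$
which follows from two integrations by parts: the first produces $-\|\nabla V\|_{L^{2}}^{2}$ together with $-\int y_{k}\partial_{k}(\tfrac{1}{2}|\nabla V|^{2})\,dy$, and a second integration by parts on this last integral equals $+\|\nabla V\|_{L^{2}}^{2}$ because $\mathrm{div}\,y=2$ in $\mathbb{R}^{2}$; the two cancel exactly. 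The nonlinear term is bounded by the 2D Ladyzhenskaya--Gagliardo--Nirenberg inequality $\|w\|_{L^{4}}^{2}\le C\|w\|_{L^{2}}\|\nabla w\|_{L^{2}}$, applied to $V$ and to $\nabla V$:
$$\Bigl|\int_{\mathbb{R}^{2}}(V\cdot\nabla V)\cdot\Delta V\,dy\Bigr|\le \|V\|_{L^{4}}\|\nabla V\|_{L^{4}}\|\Delta V\|_{L^{2}}\le C\|V\|_{L^{2}}^{1/2}\|\nabla V\|_{L^{2}}\|\Delta V\|_{L^{2}}^{3/2}.$$
Young's inequality absorbs $\epsilon\|\Delta V\|_{L^{2}}^{2}$ on the left and leaves a quartic remainder in $\|V\|_{L^{2}}$ and $\|\nabla V\|_{L^{2}}$ of precisely the form required; the source term $\int f\cdot(-\Delta V)\le\|f\|_{L^{2}}\|\Delta V\|_{L^{2}}$ is handled the same way. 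Collecting the bounds and using $\|V\|_{H^{2}}\le C(\|V\|_{L^{2}}+\|\Delta V\|_{L^{2}})$ on $\mathbb{R}^{2}$ for divergence-free $V$ yields the stated estimate.

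The main obstacle I anticipate is the rigorous passage $R\to\infty$, since only $|y|V\in L^{2}$ is assumed and not $|y|\nabla V\in L^{2}$. The cutoff $\chi_{R}^{2}$ in front of $\Delta V$ generates commutator terms, schematically of the form $\int(y\cdot\nabla V)\cdot(\nabla\chi_{R}^{2}\cdot\nabla V)$ and $\int\nabla\chi_{R}^{2}\cdot V\cdot\nabla V$, that arise when interchanging derivatives with the cutoff and attempting to close the 2D cancellation identity above at finite $R$. Each of these must be shown to vanish as $R\to\infty$, and this is where the hypothesis $|y|V\in L^{2}$ is consumed: the $1/R$ gain from $|\nabla\chi_{R}|$ combines with $\|\,|y|V\|_{L^{2}}$ and $\|\nabla V\|_{L^{2}}$ to control the problematic factors of $y$ on the support of $\nabla\chi_{R}\subset\{R\le|y|\le 2R\}$. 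Once this limiting argument is carried out, the global identity holds and the $H^{2}$ bound follows directly from the calculations above.
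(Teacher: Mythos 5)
Your plan is, at heart, the same energy computation the paper performs: pairing the equation with $-\Delta V$, exploiting that the pressure drops out, that the drift term is harmless in 2D, and estimating the nonlinearity by Ladyzhenskaya--Gagliardo--Nirenberg. The paper, however, never tests with $\Delta V$ directly; it tests with $\varphi=-D_k^{-h}(D_k^hV)$, a difference-quotient surrogate which \emph{is} an admissible $H^1$ test function for a solution that is only known to lie in $H^1$. This is exactly the point where your argument has a genuine gap: the weak formulation \eqref{eq:weak} only allows $\varphi\in H^1(\mathbb{R}^2)$, and your test function $\varphi_R=-\chi_R^2\Delta V$ is not known to belong to $H^1$. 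Interior Stokes regularity with $f\in L^2$ (and the drift and $V\cdot\nabla V$ in $L^2_{\mathrm{loc}}$ after a short bootstrap) gives $V\in H^2_{\mathrm{loc}}$, so $\chi_R^2\Delta V\in L^2$, but $\nabla(\chi_R^2\Delta V)$ involves $\nabla\Delta V$, which is not locally square-integrable under the hypotheses; you would need $V\in H^3_{\mathrm{loc}}$, which $f\in L^2$ does not provide. Since the whole content of the lemma is the passage from $H^1$ to $H^2$, this step cannot be taken for granted; repairing it requires precisely the approximation device you skipped (difference quotients, or mollifying $\Delta V$ inside the test function and commuting), which is what the paper's proof is organized around. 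Your other computations (the vanishing of $\int(y\cdot\nabla V)\cdot\Delta V$ in 2D, the absorption of the pressure and cutoff commutators) are correct in spirit, though the commutators generated by $\chi_R$ that involve $\Delta V$ on the annulus must be absorbed into $\epsilon\int\chi_R^2|\Delta V|^2$ rather than sent to zero outright, since no global bound on $\Delta V$ is available a priori; and the role you assign to the hypothesis $|y|V\in L^2$ is not quite right --- the commutators you mention are controlled by $|y|\,|\nabla\chi_R|\lesssim 1$ together with $\nabla V\in L^2$ alone, the weight hypothesis being there to make the drift term in \eqref{eq:weak} finite and the admissible test class meaningful.

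A smaller quantitative point: your Young step gives a remainder $C\|V\|_{L^2}^{2}\|\nabla V\|_{L^2}^{4}$, hence a final bound with $\|\nabla V\|_{L^2}^{2}\|V\|_{L^2}$ rather than the stated $\|\nabla V\|_{L^2}^{2}\|V\|_{L^2}^{1/2}$ (the paper's difference-quotient estimate produces $\|\nabla V\|^{4}\|V\|_{L^2}$ before taking square roots, which is where the $1/2$ power comes from). This is harmless for the application but it is not ``precisely the form required.'' You can recover (indeed improve) the stated form by integrating the nonlinear term by parts once before estimating: using $\operatorname{div}V=0$,
\begin{equation*}
\int_{\mathbb{R}^2}(V\cdot\nabla V)\cdot\Delta V\,dy=-\int_{\mathbb{R}^2}(\partial_iV\cdot\nabla V)\cdot\partial_iV\,dy,
\qquad
\Bigl|\int_{\mathbb{R}^2}(\partial_iV\cdot\nabla V)\cdot\partial_iV\,dy\Bigr|\le C\|\nabla V\|_{L^2}^{2}\|\nabla^2V\|_{L^2},
\end{equation*}
which absorbs cleanly and yields a bound dominated by the one claimed in the lemma.
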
 
		
		\begin{proof}
			Let $D_{k}^{h}u$ denote the difference quotient of $u$:
			\begin{equation*}
				D_{k}^{h}u(y) = \frac{u(y + h\mathbf{e}_{k}) - u(y)}{h}, \quad h \in \mathbb{R} \setminus \{0\}.
			\end{equation*}
			Choosing $\varphi(y) = -D_{k}^{-h}(D_{k}^{h}V)$ as a test function in \eqref{eq:weak}, we obtain
			\begin{equation*}
				\begin{aligned}
					-\int_{\mathbb{R}^{2}} \nabla V : \nabla D_{k}^{-h}(D_{k}^{h}V) \, dy &= -\int_{\mathbb{R}^{2}} \nabla V : D_{k}^{-h}(D_{k}^{h}\nabla V) \, dy \\
					&= \int_{\mathbb{R}^{2}} |D_{k}^{h}\nabla V|^2 \, dy = \|D_{k}^{h}\nabla V\|_{L^{2}(\mathbb{R}^{2})}^{2}.
				\end{aligned}
			\end{equation*}
			Integration by parts yields
			\begin{equation*}
				\begin{aligned}
					&\quad \frac{1}{2} \int_{\mathbb{R}^{2}} (y \cdot \nabla V + V) \cdot D_{k}^{-h}(D_{k}^{h}V) \, dy \\
					&= -\frac{1}{2} \int_{\mathbb{R}^{2}} D_{k}^{h}(y \cdot \nabla V + V) \cdot D_{k}^{h}V \, dy \\
					&= -\frac{1}{2} \int_{\mathbb{R}^{2}} \left[ (y + h\mathbf{e}_{k}) \cdot \nabla D_{k}^{h}V + (D_{k}^{h}y) \cdot \nabla V + D_{k}^{h}V \right] \cdot D_{k}^{h}V \, dy \\
					&= -\frac{1}{2} \int_{\mathbb{R}^{2}} y \cdot \nabla D_{k}^{h}V \cdot D_{k}^{h}V \, dy - \frac{1}{2} \int_{\mathbb{R}^{2}} \partial_{y_{k}}V \cdot D_{k}^{h}V \, dy - \frac{1}{2} \int_{\mathbb{R}^{2}} |D_{k}^{h}V|^2 \, dy \\
					&= -\frac{1}{2} \int_{\mathbb{R}^{2}} \partial_{y_{k}}V \cdot D_{k}^{h}V \, dy,
				\end{aligned}
			\end{equation*}
			where we used $\int y \cdot \nabla \psi \cdot \psi \, dy = -\int \psi^2 \, dy$ for $\psi = D_k^h V$.
			For the pressure term, integration by parts gives
			\begin{equation*}
				-\int_{\mathbb{R}^{2}} P \operatorname{div} D_{k}^{-h}(D_{k}^{h}V) \, dy = \int_{\mathbb{R}^{2}} D_{k}^{h}P \operatorname{div} D_{k}^{h}V \, dy = 0,
			\end{equation*}
			owing to the divergence-free condition $\operatorname{div} V = 0$.
			Similarly, for the nonlinear term, we have
			\begin{equation*}
				\begin{aligned}
					-\int_{\mathbb{R}^{2}} V\cdot \nabla D_{k}^{-h}(D_{k}^{h}V) \cdot V \, dy &= \int_{\mathbb{R}^{2}} V(y + h\mathbf{e}_{k}) \cdot \nabla D_{k}^{h}V(y) \cdot D_{k}^{h}V(y) \, dy \\
					&\quad + \int_{\mathbb{R}^{2}} D_{k}^{h} V(y) \cdot \nabla D_{k}^{h}V(y) \cdot V(y) \, dy \\
					&= \int_{\mathbb{R}^{2}} D_{k}^{h} V(y) \cdot \nabla D_{k}^{h}V(y) \cdot V(y) \, dy.
				\end{aligned}
			\end{equation*}
			Substituting all above into \eqref{eq:weak} leads to
			\begin{equation}\label{eq:eneridt}
				\begin{aligned}
					\|D_{k}^{h}\nabla V\|_{L^{2}(\mathbb{R}^{2})}^{2} &= -\int_{\mathbb{R}^{2}} f(y) D_{k}^{-h}D_{k}^{h}V(y) \, dy + \frac{1}{2} \int_{\mathbb{R}^{2}} \partial_{y_{k}}V \cdot D_{k}^{h}V \, dy \\
					&\quad + \int_{\mathbb{R}^{2}} D_{k}^{h} V \cdot \nabla D_{k}^{h}V \cdot V \, dy.
				\end{aligned}
			\end{equation}
			It remains to estimate the right-hand side of the above and pass to the limit as $h \to 0$. 
			
			First, we note that
			\begin{equation*}
				\begin{aligned}
					\int_{\mathbb{R}^{2}} D_{k}^{-h}(D_{k}^{h}V) \cdot D_{k}^{-h}(D_{k}^{h}V) \, dy &=
					-\int_{\mathbb{R}^{2}} (D_{k}^{h}V) \cdot D_{k}^{h}D_{k}^{-h}(D_{k}^{h}V) \, dy \\
					&=
					\int_{\mathbb{R}^{2}} D_{k}^{h}(D_{k}^{h}V) \cdot D_{k}^{h}(D_{k}^{h}V) \, dy \\
					&\leq C \|\nabla D_{k}^{h}V\|_{L^{2}(\mathbb{R}^{2})}^{2}.
				\end{aligned}
			\end{equation*}
			This is just
			\begin{equation*}
				\|D_{k}^{-h}D_{k}^{h}V\|_{L^2}^2 \leq C \|\nabla D_{k}^{h}V\|_{L^{2}(\mathbb{R}^{2})}^{2}.
			\end{equation*}
			This allows us to estimate the first term on the right-hand side of \eqref{eq:eneridt} as
			\begin{equation*}
				-\int_{\mathbb{R}^{2}} f D_{k}^{-h}D_{k}^{h}V \, dy \leq C \|f\|_{L^{2}}^2 + \frac{1}{10} \|\nabla D_{k}^{h}V\|_{L^{2}}^2.
			\end{equation*}
			By Hölder's inequality, the second term on the right-hand side of \eqref{eq:eneridt}  satisfies
			\begin{equation*}
				\frac{1}{2} \int_{\mathbb{R}^{2}} \partial_{y_{k}}V \cdot D_{k}^{h}V \, dy \leq \|\partial_{y_{k}}V\|_{L^{2}(\mathbb{R}^{2})} \|D_{k}^{h}V\|_{L^{2}(\mathbb{R}^{2})} \leq C \|\nabla V\|_{L^{2}(\mathbb{R}^{2})}^{2}.
			\end{equation*}
			Using the Gagliardo–Nirenberg  interpolation inequality, the last term on the right-hand side of \eqref{eq:eneridt} is bounded by
			\begin{equation*}
				\begin{aligned}
					\int_{\mathbb{R}^{2}} D_{k}^{h}V(y) \cdot \nabla D_{k}^{h} V(y) \cdot V(y) \, dy &\leq \|V\|_{L^{6}(\mathbb{R}^{2})} \|D_{k}^{h}V\|_{L^{3}(\mathbb{R}^{2})} \|D_{k}^{h}\nabla V\|_{L^{2}(\mathbb{R}^{2})} \\
					&\leq C \|V\|_{L^{6}(\mathbb{R}^{2})} \|D_{k}^{h}V\|_{L^{2}(\mathbb{R}^{2})}^{2/3} \|D_{k}^{h}\nabla V\|_{L^{2}(\mathbb{R}^{2})}^{4/3} \\
					&\leq C \|V\|_{L^{2}(\mathbb{R}^{2})}^{1/3} \|\nabla V\|_{L^{2}(\mathbb{R}^{2})}^{2/3} \|D_{k}^{h}V\|_{L^{2}(\mathbb{R}^{2})}^{2/3} \|D_{k}^{h}\nabla V\|_{L^{2}(\mathbb{R}^{2})}^{4/3} \\
					&\leq C \|V\|_{L^{2}(\mathbb{R}^{2})}^{1/3} \|\nabla V\|_{L^{2}(\mathbb{R}^{2})}^{4/3}  \|D_{k}^{h}\nabla V\|_{L^{2}(\mathbb{R}^{2})}^{4/3} \\
					&\leq C \|\nabla V\|_{L^{2}(\mathbb{R}^{2})}^{4} \| V\|_{L^{2}(\mathbb{R}^{2})} + \frac{1}{10} \|D_{k}^{h}\nabla V\|_{L^{2}(\mathbb{R}^{2})}^{2}.
				\end{aligned}
			\end{equation*}
			Collecting all estimates above yields
			\begin{equation*}
				\|D_{k}^{h}\nabla V\|_{L^{2}(\mathbb{R}^{2})}^{2} \leq C \|\nabla V\|_{L^{2}(\mathbb{R}^{2})}^2 + C \|\nabla V\|_{L^{2}(\mathbb{R}^{2})}^{4} \| V\|_{L^{2}(\mathbb{R}^{2})} + C \|f\|_{L^{2}(\mathbb{R}^{2})}^{2}.
			\end{equation*}
			Taking $h \to 0$ in the above inequality, we are led to
			\begin{equation*}
				\|\nabla^{2}V\|_{L^{2}(\mathbb{R}^{2})}^{2}  \leq C \|\nabla V\|_{L^{2}(\mathbb{R}^{2})}^2 + C \|\nabla V\|_{L^{2}(\mathbb{R}^{2})}^{4} \| V\|_{L^{2}(\mathbb{R}^{2})} + C \|f\|_{L^{2}(\mathbb{R}^{2})}^{2}.
			\end{equation*}
			Hence,
			\begin{equation*}
				\begin{aligned}
					\|V\|_{H^{2}(\mathbb{R}^{2})}  &= 	\|V\|_{H^{1}(\mathbb{R}^{2})}  +  \|\nabla^{2}V\|_{L^{2}(\mathbb{R}^{2})}\\
					&\leq C \left(  \| V\|_{H^{1}(\mathbb{R}^{2})}+  \|\nabla V\|_{L^{2}(\mathbb{R}^{2})}^{2} \| V\|_{L^{2}(\mathbb{R}^{2})}^{\frac{1}{2}} +  \|f\|_{L^{2}(\mathbb{R}^{2})} \right).
				\end{aligned}
			\end{equation*}
			This completes the proof of the lemma. 
		\end{proof}

		\begin{proposition}\label{pro:H2est}
			Under the same assumption of Proposition \ref{eq:H1est}, 
			we have 
			\begin{equation*}
				\|v_{\text{re}} \|_{H^2(\mathbb{R}^2)}\leq C(A,\beta).
			 \end{equation*}
		\end{proposition}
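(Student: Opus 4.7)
The plan is to apply Lemma \ref{eq:H1est} directly to $v_{\textnormal{re}}$ by recasting \eqref{eq:differencereca} in the exact form that lemma requires. Indeed, rearranging \eqref{eq:differencereca}, I would write
\begin{equation*}
-\Delta v_{\textnormal{re}} + v_{\textnormal{re}}\cdot \nabla v_{\textnormal{re}} - \tfrac{1}{2}\bigl(y\cdot \nabla v_{\textnormal{re}} + v_{\textnormal{re}}\bigr) + \nabla q = f,
\end{equation*}
with the forcing
\begin{equation*}
f := -\,v_0\cdot\nabla v_0 \;-\; v_0\cdot \nabla v_{\textnormal{re}} \;-\; v_{\textnormal{re}}\cdot \nabla v_0.
\end{equation*}
So the task reduces to checking the hypotheses of Lemma \ref{eq:H1est}, namely that $v_{\textnormal{re}}\in H^1(\mathbb{R}^2)$, $|y|v_{\textnormal{re}}\in L^2(\mathbb{R}^2)$, $q\in L^2(\mathbb{R}^2)$, and $f\in L^2(\mathbb{R}^2)$, and that the weak formulation \eqref{eq:weak} holds for the admissible test class.

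The first three conditions come for free: $v_{\textnormal{re}}\in H^1$ is part of Theorem \ref{thm:energyestkey}, $|y|v_{\textnormal{re}}\in L^2$ is the content of Proposition \ref{prop:weighted_H1est}, and $q\in L^2$ is included in Theorem \ref{thm:energyestkey}. The weak identity \eqref{eq:weak} is just a distributional reformulation of \eqref{eq:differencereca}, which is legitimate since all terms are locally integrable on $\mathbb{R}^2$ and decay sufficiently to justify the integration by parts against test fields $\varphi$ with $|y|\varphi\in L^2$.

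To verify $f\in L^2$, I would handle the three pieces separately using Lemma \ref{lem:lineardecay}. The self-interaction $v_0\cdot\nabla v_0$ satisfies the pointwise bound $|v_0\cdot \nabla v_0|\le C(A,\beta)(1+|y|)^{-(2+\beta)}$, which is square-integrable on $\mathbb{R}^2$ as soon as $2(2+\beta)>2$. For the mixed terms I would use $\|v_0\|_{L^\infty}+\|\nabla v_0\|_{L^\infty}\le C(A,\beta)$ from Lemma \ref{lem:lineardecay}, together with the $H^1$ bound from Theorem \ref{thm:energyestkey}, to conclude
\begin{equation*}
\|v_0\cdot \nabla v_{\textnormal{re}}\|_{L^2}\le \|v_0\|_{L^\infty}\|\nabla v_{\textnormal{re}}\|_{L^2}\le C(A,\beta),\qquad \|v_{\textnormal{re}}\cdot \nabla v_0\|_{L^2}\le \|\nabla v_0\|_{L^\infty}\|v_{\textnormal{re}}\|_{L^2}\le C(A,\beta).
\end{equation*}

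Having verified the hypotheses, Lemma \ref{eq:H1est} gives
\begin{equation*}
\|v_{\textnormal{re}}\|_{H^2(\mathbb{R}^2)}\le C\bigl(\|v_{\textnormal{re}}\|_{H^1(\mathbb{R}^2)} + \|\nabla v_{\textnormal{re}}\|_{L^2(\mathbb{R}^2)}^2\|v_{\textnormal{re}}\|_{L^2(\mathbb{R}^2)}^{1/2} + \|f\|_{L^2(\mathbb{R}^2)}\bigr)\le C(A,\beta),
\end{equation*}
which is the desired conclusion. There is no serious obstacle here; the only mildly delicate point is bookkeeping for $f$, in particular ensuring that the borderline decay $(1+|y|)^{-(2+\beta)}$ is enough for $L^2$ (it is, for every $\beta>0$, and this again highlights where the H\"older exponent $\beta>0$ is used, consistent with the remark following Theorem \ref{thm:energyest}).
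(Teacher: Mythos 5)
Your proposal is correct and follows essentially the same route as the paper: rewrite \eqref{eq:differencereca} with the forcing $f=-(v_0\cdot\nabla v_{\text{re}}+v_{\text{re}}\cdot\nabla v_0+v_0\cdot\nabla v_0)$, verify $f\in L^2$ via Lemma \ref{lem:lineardecay} and the $H^1$ bound, and apply Lemma \ref{eq:H1est} together with Proposition \ref{prop:weighted_H1est}. The only minor bookkeeping point is that the $L^2$ bound for $q$ should be taken from the Calder\'on--Zygmund step already used in Proposition \ref{prop:weighted_H1est} (which needs only the $H^1$ estimate), rather than from the pressure estimates in Theorem \ref{thm:energyestkey}, since in the paper those are completed only after Proposition \ref{pro:H2est}.
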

		
		\begin{proof}
			By Proposition \ref{prop:weighted_H1est}, we have that 
			$|y|v_{\text{re}} \in H^1(\mathbb{R}^2)$.
			We check that the force
			\begin{equation*}
				f  =-(v_0\cdot \nabla v_{\text{re}} + v_{\text{re}}\cdot \nabla  v_0 + v_0\cdot\nabla  v_0) \in L^2(\mathbb{R}^2), ~ \textrm{and}~\|f\|_{L^2(\mathbb{R}^2)}\leq C(A,\beta)
			\end{equation*}
			due to $v_0$ is smooth bounded and satisfies the decay estimates in  Lemma \ref{lem:lineardecay}, and that $v_{re}\in H^1(\mathbb{R}^2)$.
			Applying Lemma \ref{eq:H1est} 
			with this $f$,  
			we obtain   that 
            \begin{equation*}
				\|v_{\text{re}}(y) \|_{H^2(\mathbb{R}^2)}\leq C(A,\beta).
			\end{equation*}
            This finishes the proof of the proposition.
            \end{proof}

		With these regularity estimates in hand, we are going to show the $H^2$-estimate for $\sqrt{1+|y|^2}v_{\text{re}}$, which implies that $\sqrt{1+|y|^2}v_{\text{re}}(y)$ is bounded.
		
		\begin{proposition}[Weighted $H^2$-estimates] \label{propH2est}
            Under the same assumption of Proposition \ref{eq:H1est}, 
			we have 
			\begin{equation}
				\| \sqrt{1+|y|^2}v_{\text{re}}\|_{H^2(\mathbb{R}^2)}\leq C(A,\beta).
			\end{equation}
		\end{proposition}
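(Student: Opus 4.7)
By Proposition \ref{prop:weighted_H1est} and the uniform boundedness of the derivatives of $\sqrt{1+|y|^2}$ up to order two, the claim reduces to establishing the weighted $L^2$ bound on second derivatives
$$\||y|\,\nabla^2 v_{\text{re}}\|_{L^2(\mathbb{R}^2)} \le C(A,\beta).$$
The plan is to differentiate equation \eqref{eq:differencereca} in $y_k$ for $k=1,2$, obtaining a Leray-type system for $w_k:=\partial_k v_{\text{re}}$ with pressure $\partial_k q$ and additional lower-order contributions arising from $\partial_k(v_0\cdot\nabla v_{\text{re}})+\partial_k(v_{\text{re}}\cdot\nabla v_0)$ and from the commutator $[\partial_k,\,y\cdot\nabla]=\partial_k$, and then test this differentiated system against $|y|^2 w_k$ (with a standard regularization $h_\epsilon(y)=|y|/\sqrt{1+\epsilon|y|^2}$, letting $\epsilon\to 0$). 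This mirrors the mechanism of Proposition \ref{prop:weighted_H1est}, one derivative up.

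The dissipation contributes $\||y|\nabla w_k\|_{L^2}^2$ modulo a bounded term, and the linear drift-damping block $-\tfrac12 w_k-\tfrac12 y\cdot\nabla w_k$ yields the coercive quantity $\tfrac12\||y|w_k\|_{L^2}^2$, exactly as in Step 2 of Proposition \ref{prop:weighted_H1est}, using $\nabla\cdot(y|y|^2)=4|y|^2$ specific to two dimensions. All terms involving $v_0$ (including the new commutators from differentiating the drift and from $\partial_k v_0$) are estimated by pairing the decay $|\nabla^j v_0|\lesssim (1+|y|)^{-1-\beta}$ from Lemma \ref{lem:lineardecay} with the weighted $H^1$ bound from Proposition \ref{prop:weighted_H1est} and the unweighted $H^2$ bound from Proposition \ref{pro:H2est}. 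The pressure term, after integrating by parts using $\div w_k=0$, collapses to $-2\int \partial_k q\,(y\cdot w_k)\,dy$, which is controlled by $\|\nabla q\|_{L^2}\||y|\nabla v_{\text{re}}\|_{L^2}$, both factors being bounded thanks to Theorem \ref{thm:energyestkey} and Proposition \ref{prop:weighted_H1est}.

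The main obstacle is the self-interaction contribution $\int\partial_k(v_{\text{re}}\cdot\nabla v_{\text{re}})\cdot|y|^2 w_k$. Expanding and integrating by parts using $\div v_{\text{re}}=0$ produces a piece $-\int(y\cdot v_{\text{re}})|w_k|^2$, bounded via H\"older's inequality, the weighted $H^1$ bound on $v_{\text{re}}$ and Sobolev embedding applied to $w_k\in H^1$; and a cubic piece $\int|y|^2(w_k\cdot\nabla v_{\text{re}})\cdot w_k$, which I plan to handle by the two-dimensional Ladyzhenskaya interpolation
$$\||y|w_k\|_{L^4}^2 \le C\,\||y|w_k\|_{L^2}\bigl(\|w_k\|_{L^2}+\||y|\nabla w_k\|_{L^2}\bigr),$$
combined with $\|\nabla v_{\text{re}}\|_{L^2}\le C$. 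The resulting $\||y|\nabla w_k\|_{L^2}$ factor is absorbed into the dissipative term via Young's inequality with a small coefficient. Summing over $k=1,2$ and passing $\epsilon\to 0$ then closes the estimate and, together with Proposition \ref{prop:weighted_H1est}, yields the claimed weighted $H^2$ bound.
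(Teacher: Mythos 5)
Your proposal follows essentially the same route as the paper: the paper implements ``differentiate in $y_k$ and test against $|y|^2\partial_k v_{\text{re}}$'' rigorously via difference quotients, testing \eqref{eq:differencereca} with $\varphi=-D_k^{-h}(|y|^2D_k^h v_{\text{re}})$, extracts $\||y|\nabla^2 v_{\text{re}}\|_{L^2}$ from the dissipation, and controls all remaining terms by Lemma \ref{lem:lineardecay}, Theorem \ref{thm:energyestkey} and Propositions \ref{prop:weighted_H1est}--\ref{pro:H2est}, just as you do; your weighted Ladyzhenskaya treatment of the cubic term in place of the paper's use of $\|v_{\text{re}}\|_{L^\infty}$ (from the $H^2$ bound) is an immaterial variation.

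One caveat: your claim that the block $-\tfrac12 w_k-\tfrac12 y\cdot\nabla w_k$ yields the coercive quantity $\tfrac12\||y|w_k\|_{L^2}^2$ ``exactly as in Step 2'' overlooks that the commutator term $\tfrac12\partial_k v_{\text{re}}$ coming from $\partial_k\bigl(\tfrac12 y\cdot\nabla v_{\text{re}}\bigr)$ contributes $-\tfrac12\||y|w_k\|_{L^2}^2$ when tested against $|y|^2 w_k$, so the full differentiated linear block produces no net coercivity at this order (in the paper's difference-quotient version the analogous term is $-\tfrac12\int\partial_{y_k} v_{\text{re}}\cdot|y|^2 D^h_k v_{\text{re}}\,dy$, which is simply bounded using the already known weighted $H^1$ estimate, leaving a residual $\tfrac14\||y|D_k^h v_{\text{re}}\|_{L^2}^2$ on the left). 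This is harmless in your argument, because $\||y|\partial_k v_{\text{re}}\|_{L^2}\le C$ is already supplied by Proposition \ref{prop:weighted_H1est} and you never rely on that coercive term to absorb anything --- the only absorption you need goes into the dissipation $\||y|\nabla w_k\|_{L^2}^2$, and your pressure, interaction and source terms are bounded directly by known quantities --- but the drift should not be presented as providing new coercivity for $w_k$; it is the previously established weighted $H^1$ bound that plays that role.
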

		
		\begin{proof}
			Having established the $H^1$-estimate of $\sqrt{1+|y|^2}v_{\text{re}}$, we now focus on the second-order derivative estimates. First, we observe the following expansion for $i,j=1,2$:
			\begin{equation*}
				\begin{aligned}
					\partial_{ij}(\sqrt{1+|y|^2}v_{\text{re}}) = & \sqrt{1+|y|^2} \partial_{ij} v_{\text{re}}
					+ \frac{y_i \partial_j v_{\text{re}} + y_j \partial_i v_{\text{re}}}{\sqrt{1+|y|^2}} \
					+ \left( \frac{\delta_{ij}}{\sqrt{1+|y|^2}} - \frac{y_i y_j}{(1+|y|^2)^{3/2}} \right) v_{\text{re}}.
				\end{aligned}
			\end{equation*}
			To obtain $L^2$-estimates for $\partial_{ij}(\sqrt{1+|y|^2}v_{\text{re}})$, it is sufficient to bound $\sqrt{1+|y|^2} \partial_{ij} v_{\text{re}}$ in $L^2$ as all the other terms are bounded in  $L^2$ due to Proposition \ref{pro:H2est}. Furthermore, also in view of the $H^2$-estimate for $v_{\text{re}}$ provided in Proposition \ref{pro:H2est}, it is equivalent to control the term $|y|\nabla^2 v_{\text{re}}$ in $L^2(\mathbb{R}^2)$.

            For the sake of clarity for presentation, we perform the estimates using the  weight $h(y) = |y|$ and the test function $\varphi(y) = -D_k^{-h} (|y|^2 D_k^h v_{\text{re}})$. The more rigorous proof employing  $h_\epsilon(y) = \frac{|y|}{(1+\epsilon|y|^2)^{1/2}}$ and passing $\epsilon \to 0$ follows analogously with minor modifications.
            
                 Below, we use $C$ to denote a constant depending on $A$ and $\beta$, which may vary from line to line.
			Testing \eqref{eq:differencereca} with $\varphi(y) = -D_k^{-h} (|y|^2 D_k^h v_{\text{re}})$ ($k=1,2$), we obtain
			\begin{equation}\label{eq:H2test}
				\begin{aligned}
					&\int_{\mathbb{R}^2} \nabla v_{\text{re}} : \nabla (-D_k^{-h} (|y|^2 D_k^h v_{\text{re}}) ) \, dy 
					- \frac{1}{2} \int_{\mathbb{R}^2} v_{\text{re}} \cdot (-D_k^{-h} (|y|^2 D_k^h v_{\text{re}})) \, dy \\
					& - \frac{1}{2} \int_{\mathbb{R}^2} y \cdot \nabla v_{\text{re}} \cdot (-D_k^{-h} (|y|^2 D_k^h v_{\text{re}}) ) \, dy - \int_{\mathbb{R}^2} q \operatorname{div} (-D_k^{-h} (|y|^2 D_k^h v_{\text{re}} )) \, dy \\
					&= \int_{\mathbb{R}^2} v_{\text{re}} \cdot \nabla (-D_k^{-h} (|y|^2 D_k^h v_{\text{re}}) ) \cdot v_{\text{re}} \, dy - \int_{\mathbb{R}^2} v_0 \cdot \nabla v_{\text{re}} \cdot (-D_k^{-h} (|y|^2 D_k^h v_{\text{re}})) \, dy \\
					& - \int_{\mathbb{R}^2} v_{\text{re}} \cdot \nabla v_0 \cdot (-D_k^{-h} (|y|^2 D_k^h v_{\text{re}}) ) \, dy - \int_{\mathbb{R}^2} v_0 \cdot \nabla v_0 \cdot (-D_k^{-h} (|y|^2 D_k^h v_{\text{re}})) \, dy.
				\end{aligned}
			\end{equation}
            
			We now calculate the above by utilizing the properties of difference quotients and the product rule for the gradient operator. 
			Integration by parts leads to
			\begin{equation}
				\begin{aligned}
					&\quad \int_{\mathbb{R}^2} \nabla v_{\text{re}} : \nabla (-D_k^{-h} (|y|^2 D_k^h v_{\text{re}}) ) dy \\
					&= \int_{\mathbb{R}^2} D_k^h \nabla v_{\text{re}} : \nabla (|y|^2 D_k^h v_{\text{re}}) dy \\
					&= \int_{\mathbb{R}^2} |y|^2 |D_k^h \nabla v_{\text{re}}|^2 dy + 2 \int_{\mathbb{R}^2} (D_k^h \nabla v_{\text{re}}) : (y \otimes D_k^h v_{\text{re}}) dy \\
					&= \||y| D_k^h \nabla v_{\text{re}}\|^2_{L^2(\mathbb{R}^2)} + 2 \int_{\mathbb{R}^2} |y| D_k^h \nabla v_{\text{re}} : \left( \frac{y}{|y|} \otimes D_k^h v_{\text{re}} \right) dy\\
					&	\geq  \frac{9}{10} \||y| \nabla D_k^h v_{\text{re}}\|_{L^2} ^2 - C,
				\end{aligned}
			\end{equation}
			where the second term in above is estimated via the H\"{o}lder inequality and the Young's inequality as follows:
			\begin{equation*}
				\begin{aligned}
					 - 2 \int_{\mathbb{R}^2} |y| D_k^h \nabla v_{\text{re}} : \left( \frac{y}{|y|} \otimes D_k^h v_{\text{re}} \right) dy 
					\leq & C \|\nabla v_{\text{re}}\|_{L^2(\mathbb{R}^2)} \||y| D_k^h \nabla v_{\text{re}}\|_{L^2(\mathbb{R}^2)} \\
					\leq&	 C \|\nabla v_{\text{re}}\|_{L^2(\mathbb{R}^2)}^2 + \frac{1}{10} \||y| D_k^h \nabla v_{\text{re}}\|_{L^2(\mathbb{R}^2)}^2\\
					\leq &C + \frac{1}{10} \||y| \nabla D_k^h v_{\text{re}}\|_{L^2} ^2.
				\end{aligned}
			\end{equation*}
			Utilizing the divergence-free condition $\mathrm{div} \, v_{\text{re}} = 0$, the second and third terms on the left-hand side of  \eqref{eq:H2test} are estimated as:
			\begin{equation*}
				\begin{aligned}
					& - \frac{1}{2} \int_{\mathbb{R}^2} y \cdot \nabla v_{\text{re}} \cdot (-D_k^{-h} (|y|^2 D_k^h v_{\text{re}})) \, dy - \frac{1}{2} \int_{\mathbb{R}^2} v_{\text{re}} \cdot (-D_k^{-h} (|y|^2 D_k^h v_{\text{re}})) \, dy \\
					= & - \frac{1}{2} \int_{\mathbb{R}^2} (y+h\mathbf{e}_k) \cdot \nabla D_k^h v_{\text{re}} |y|^2 D_k^h v_{\text{re}} \, dy - \frac{1}{2} \int_{\mathbb{R}^2} \partial_{y_k} v_{\text{re}} \cdot (|y|^2 D_k^h v_{\text{re}}) \, dy \\
					& - \frac{1}{2} \int_{\mathbb{R}^2} |y|^2 D_k^h v_{\text{re}} D_k^h v_{\text{re}} \, dy \\
					= & \frac{1}{2} \int_{\mathbb{R}^2} |y|^2 D_k^h v_{\text{re}} \cdot D_k^h v_{\text{re}} \, dy - \frac{h}{2} \int_{\mathbb{R}^2} \partial_{y_k}  D_k^h v_{\text{re}}  |y|^2 D_k^h v_{\text{re}} \, dy \\
					& - \frac{1}{2} \int_{\mathbb{R}^2} \partial_{y_k} v_{\text{re}} \cdot (|y|^2 D_k^h v_{\text{re}}) \, dy \\ 
					= & \frac{1}{2} \int_{\mathbb{R}^2} |y|^2 D_k^h v_{\text{re}} \cdot D_k^h v_{\text{re}} \, dy + \frac{h}{2} \int_{\mathbb{R}^2}y_k   D_k^h v_{\text{re}} \cdot  D_k^h v_{\text{re}} \, dy \\
					& - \frac{1}{2} \int_{\mathbb{R}^2} \partial_{y_k} v_{\text{re}} \cdot (|y|^2 D_k^h v_{\text{re}}) \, dy \\ 
                    \geq & \frac{1}{4} \int_{\mathbb{R}^2} |y|^2 D_k^h v_{\text{re}} \cdot D_k^h v_{\text{re}} \, dy + \frac{h}{2} \int_{\mathbb{R}^2}y_k   D_k^h v_{\text{re}} \cdot  D_k^h v_{\text{re}} \, dy \\
					& -  \int_{\mathbb{R}^2} |y|^2|\partial_{y_k} v_{\text{re}}|^2  \, dy \\ 
					\geq& \frac{1}{4} \int_{\mathbb{R}^2} |y|^2 |D_k^h v_{\text{re}}|^2  \, dy + \frac{h}{2} \int_{\mathbb{R}^2}y_k   |D_k^h v_{\text{re}}|^2   \, dy -C.
				\end{aligned}
			\end{equation*}			
			
			For the pressure term, a straightforward calculation yields
			\begin{equation*}
				\begin{aligned}
					\int_{\mathbb{R}^2} q \, \mathrm{div} (-D_k^{-h} (|y|^2 D_k^h v_{\text{re}})) \, dy 
					&= \int_{\mathbb{R}^2} D_k^h q \, \mathrm{div} (|y|^2 D_k^h v_{\text{re}}) \, dy \\
					&= \int_{\mathbb{R}^2} D_k^h q \nabla |y|^2 \cdot D_k^h v_{\text{re}} \, dy \\
					&= 2 \int_{\mathbb{R}^2} D_k^h q y \cdot (D_k^h v_{\text{re}}) \, dy \\
					&\leq C\|\nabla q\|_{L^2}^2 + \frac{1}{20}\|yD_k^h v_{\text{re}}\|_{L^2}^2\\
                    &\leq C + \frac{1}{20}\|yD_k^h v_{\text{re}}\|_{L^2}^2.
				\end{aligned}
			\end{equation*}
			
			Regarding the convection terms on the right-hand side of \eqref{eq:H2test}, integration by parts and using Proposition \ref{pro:H2est} gives
			\begin{equation*} 
				\begin{aligned} & \int_{\mathbb{R}^2} v_{\text{re}} \cdot \nabla (-D_k^{-h} |y|^2 D_k^h v_{\text{re}}) \cdot v_{\text{re}} dy \\
					= & \int_{\mathbb{R}^2} v_{\text{re}}(y+h\mathbf{e}_k) \cdot \nabla (|y|^2 D_k^h v_{\text{re}} ) \cdot D_k^h v_{\text{re}} dy + \int_{\mathbb{R}^2} D_k^h v_{\text{re}} \cdot \nabla (|y|^2 D_k^h v_{\text{re}}) \cdot v_{\text{re}}  dy \\ = & \int_{\mathbb{R}^2} v_{\text{re}}(y+h\mathbf{e}_k) \cdot 2y   ( D_k^h v_{\text{re}}) \cdot D_k^h v_{\text{re}}  dy + \int_{\mathbb{R}^2} |y|^2 v_{\text{re}} (y+h\mathbf{e}_k) \cdot \nabla ( D_k^h v_{\text{re}}) \cdot  D_k^h v_{\text{re}} dy  \\
					& + \int_{\mathbb{R}^2}  D_k^h v_{\text{re}} \cdot 2y   ( D_k^h v_{\text{re}}) \cdot  v_{\text{re}} dy
					+ \int_{\mathbb{R}^2} |y|^2 D_k^h v_{\text{re}} \cdot \nabla ( D_k^h v_{\text{re}}) \cdot v_{\text{re}} dy\\
					\leq& 4\|v_{\text{re}}\|_{L^{\infty}}\||y|D_k^h v_{\text{re}}\|_{L^2} \|D_k^h v_{\text{re}}\|_{L^2}  + 2\|v_{\text{re}}\|_{L^{\infty}}\||y|D_k^h v_{\text{re}}\|_{L^2} \||y| \nabla D_k^h v_{\text{re}}\|_{L^2} \\			
					\leq& C\||y|D_k^h v_{\text{re}}\|_{L^2}^2  +  \frac{1}{20} \||y| \nabla D_k^h v_{\text{re}}\|_{L^2} ^2.
				\end{aligned}
			\end{equation*}
			Similarly, for the interaction terms involving $v_0$, we have
			\begin{equation*}
				\begin{aligned}
					&-\int_{\mathbb{R}^2} v_0 \cdot \nabla v_{\text{re}}  \cdot (-D_k^{-h} (|y|^2 D_k^h v_{\text{re}})) \\
					=& -\int_{\mathbb{R}^2} v_0(y+h\mathbf{e}_k) \cdot D_k^h \nabla v_{\text{re}}  \cdot (|y|^2 D_k^h v_{\text{re}}) -\int_{\mathbb{R}^2}D_k^{h} v_0 \cdot \nabla v_{\text{re}}  \cdot ( |y|^2 D_k^h v_{\text{re}}) \\
					\leq & \||y|v_0\|_{L^{\infty}}\|D_k^h v_{\text{re}}\|_{L^2} \||y| D_k^h \nabla v_{\text{re}}\|_{L^2}  +\||y|^2 \nabla v_0\|_{L^{\infty}} \|\nabla v_{\text{re}}\|_{L^2}^2  \\
					\leq & C \||y|v_0\|_{L^{\infty}}^2 \|\nabla v_{\text{re}}\|_{L^2}^2  + \||y|^2 \nabla v_0\|_{L^{\infty}} \|\nabla v_{\text{re}}\|_{L^2}^2 + \frac{1}{20}\||y| D_k^h \nabla v_{\text{re}}\|_{L^2}^2\\
					\leq & C  + \frac{1}{10}\||y| D_k^h \nabla v_{\text{re}}\|_{L^2}^2.
				\end{aligned}
			\end{equation*}
			For the last two terms on the right-hand side of \eqref{eq:H2test}, we have
			\begin{equation*}
				\begin{aligned}
					& -\int_{\mathbb{R}^2} v_{\text{re}} \cdot \nabla v_0 \cdot (-D_k^{-h} (|y|^2 D_k^h v_{\text{re}})) dy \\
					= & -\int_{\mathbb{R}^2} v_{\text{re}}(y + he_k) \cdot D_k^h \nabla v_0 \cdot (|y|^2 D_k^h v_{\text{re}}) dy - \int_{\mathbb{R}^2} D_k^h v_{\text{re}} \cdot \nabla v_0 \cdot (|y|^2 D_k^h v_{\text{re}}) dy \\
					\leq & C\| | y |^2 \nabla^2 v_0 \|_{L^{\infty}}\|v_{\text{re}}\|_{L^2} \|\nabla v_{\text{re}}\|_{L^2} + C\| | y |^2 \nabla v_0 \|_{L^{\infty}}\|\nabla v_{\text{re}}\|_{L^2}^2\leq C.
				\end{aligned}
			\end{equation*}
			And		
			\begin{equation*}
				\begin{aligned}
					& -\int_{\mathbb{R}^2} v_0 \cdot \nabla v_0 \cdot (-D_k^{-h} (|y|^2 D_k^h v_{\text{re}})) dy \\
					= & -\int_{\mathbb{R}^2} v_0(y + he_k) \cdot D_k^h \nabla v_0 \cdot (|y|^2 D_k^h v_{\text{re}}) dy - \int_{\mathbb{R}^2} D_k^h v_0 \cdot \nabla v_0 \cdot (|y|^2 D_k^h v_{\text{re}}) dy \\
					\leq & \| | y | v_0\|_{L^{\infty}} \| \nabla^2 v_0 \|_{L^2} \| y \|D_k^h v_{\text{re}}\|_{L^2} + \| | y |^2 \nabla v_0 \|_{L^{\infty}} \|\nabla v_0\|_{L^2} \|\nabla v_{\text{re}}\|_{L^2}\leq C.
				\end{aligned}
			\end{equation*}
			Collecting all estimates and taking $h \to 0$ we obtain that
			\begin{equation}
				\||y|\nabla^2 v_{\text{re}}\|^2_{L^2(\mathbb{R}^2)} \leq C.
			\end{equation}
			This concludes the proof.
		\end{proof}
		
		\subsection{Weighted $H^2$-Estimates of $\nabla v_{\textnormal{re}}$}
        We continue to establish the  $H^2$-estimates for $\nabla v_{\textnormal{re}}$.
		The main goal of this section is to prove the following.
		\begin{proposition} \label{pro:H3}
			Under the same assumption of Proposition \ref{eq:H1est}, we have 
			\begin{equation*}
				\|v_{\text{re}} \|_{H^3(\mathbb{R}^2)}\leq C(A,\beta), ~\|\sqrt{1+|y|^2}\nabla v_{\text{re}} \|_{H^2(\mathbb{R}^2)}\leq C(A,\beta).
			\end{equation*}
		\end{proposition}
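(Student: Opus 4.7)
Both bounds will be obtained by differentiating \eqref{eq:differencereca} with respect to a coordinate $y_k$ and then repeating the arguments of Lemma~\ref{eq:H1est}, Proposition~\ref{prop:weighted_H1est} and Proposition~\ref{propH2est} for the new unknown $W_k := \partial_k v_{\text{re}}$, $k=1,2$. Using the commutator identity $[\partial_k, y\cdot\nabla] = \partial_k$, one checks that $W_k$ is divergence-free and satisfies a Leray-type system
\begin{equation*}
-\Delta W_k - W_k - \tfrac{1}{2} y\cdot \nabla W_k + (v_0 + v_{\text{re}})\cdot \nabla W_k + W_k \cdot \nabla(v_0 + v_{\text{re}}) + \nabla(\partial_k q) = g_k,
\end{equation*}
where all the lower-order contributions are collected in
\begin{equation*}
g_k = -(\partial_k v_0)\cdot \nabla(v_0 + v_{\text{re}}) - (v_0 + v_{\text{re}})\cdot \nabla(\partial_k v_0).
\end{equation*}
Note that the damping coefficient has improved from $-\tfrac12$ to $-1$ relative to \eqref{eq:differencereca}.

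\noindent\textbf{Step 1 ($H^3$-estimate).} The decay $|\nabla^\ell v_0| \leq C(A,\beta)/(1+|y|^{1+\beta})$ for $\ell = 1,2$ from Lemma~\ref{lem:lineardecay} together with the weighted bounds $\|\sqrt{1+|y|^2}\,\nabla^\ell v_{\text{re}}\|_{L^2} \leq C(A,\beta)$ for $\ell = 0,1,2$ (immediate consequences of Proposition~\ref{propH2est}) yield $\|g_k\|_{L^2(\mathbb{R}^2)} \leq C(A,\beta)$. Since $W_k \in H^1(\mathbb{R}^2)$ is already known from Proposition~\ref{pro:H2est}, a straightforward adaptation of Lemma~\ref{eq:H1est} to the system above, in which the extra linear terms involving $v_0$ and the stretching term $W_k\cdot \nabla(v_0+v_{\text{re}})$ are controlled by the $L^\infty$-bounds of $v_0$, $\nabla v_0$ and absorbed by the dissipation in the usual difference-quotient calculation, gives $W_k \in H^2(\mathbb{R}^2)$, i.e., $v_{\text{re}} \in H^3(\mathbb{R}^2)$.

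\noindent\textbf{Step 2 (weighted estimate).} Proposition~\ref{propH2est} already provides $\|\sqrt{1+|y|^2}\,\nabla v_{\text{re}}\|_{H^1} \leq C(A,\beta)$, so only the $L^2$-bound for $\sqrt{1+|y|^2}\,\nabla^2 W_k$ remains. I would mimic the weighted difference-quotient scheme of Proposition~\ref{propH2est} applied to the equation for $W_k$: test against $\varphi = -D_j^{-h}(|y|^2 D_j^h W_k)$, with the regularized weight $h_\epsilon(y) = |y|/(1+\epsilon|y|^2)^{1/2}$, and let $\epsilon, h \to 0$ at the end. The dissipation $-\Delta W_k$ yields the coercive quantity $\||y|\nabla D_j^h W_k\|_{L^2}^2$, while the drift--damping pair now produces $-\tfrac{1}{2}\int |y|^2 |D_j^h W_k|^2\, dy$ instead of a positive term, due to the strengthened damping $-W_k$. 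This loss of sign is harmless because Proposition~\ref{propH2est} already gives $\||y|\nabla v_{\text{re}}\|_{L^2} \leq C$, which controls $\||y| D_j^h W_k\|_{L^2}$ uniformly in $h$ and allows the bad term to be absorbed into the right-hand side. All remaining convection, interaction and pressure terms are handled exactly as in the proof of Proposition~\ref{propH2est}, using $|\nabla^\ell v_0| \leq C/(1+|y|^{1+\beta})$ for $\ell \leq 2$ together with the weighted bounds of $v_{\text{re}}$ and $\nabla v_{\text{re}}$.

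\noindent\textbf{Main difficulty.} The only genuinely new technical point is to check that the source $g_k$, after multiplication by the weight, still lies in $L^2(\mathbb{R}^2)$. The most delicate contribution is $|y|\,v_0\cdot \nabla(\partial_k v_0)$, which decays pointwise like $|y|/(1+|y|)^{2+\beta}$; its $L^2$-norm is finite precisely because $\beta > 0$, in the same way H\"older regularity was used to control the Dirichlet energy in \eqref{eq:Ditrichelten}. Apart from this, the proof is a direct bookkeeping variant of the calculations in Propositions~\ref{prop:weighted_H1est}--\ref{propH2est}.
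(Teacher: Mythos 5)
Your overall strategy -- differentiate the Leray system and re-run the (weighted) difference-quotient machinery for $W_k=\partial_k v_{\text{re}}$ -- is viable and close in spirit to the paper, but the paper organizes the two steps so as to avoid the one place where your plan does not go through as written. For the $H^3$ bound the paper simply rewrites the equation as $-\Delta v_{\text{re}}=f$ with $f=\tfrac12 y\cdot\nabla v_{\text{re}}+\tfrac12 v_{\text{re}}-\mathbb{P}\big((v_0+v_{\text{re}})\cdot\nabla(v_0+v_{\text{re}})\big)$, checks $f\in H^1$ from Lemma \ref{lem:lineardecay} and Propositions \ref{pro:H2est}, \ref{propH2est}, and invokes classical elliptic regularity; your Step 1 reaches the same conclusion with more work, and note that your convecting field $v_0+v_{\text{re}}$ is \emph{not} in $H^2$ ($v_0\notin L^2$), so you cannot cite a lemma of the type of Lemma \ref{eq:H2est} verbatim -- you must handle the commutator $\int D_j^h v_0\cdot\nabla D_j^hW_k\cdot W_k$ via $\|\nabla v_0\|_{L^\infty}$ and the stretching term $W_k\cdot\nabla v_{\text{re}}$ via $H^1\hookrightarrow L^4$, which is fine. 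For the weighted bound the paper multiplies by the weight \emph{first}: it sets $V_k=\sqrt{1+|y|^2}\,\partial_k v_{\text{re}}$ and $q_k=\sqrt{1+|y|^2}\,\partial_k q$, derives a system for $(V_k,q_k)$ with convecting field $v_{\text{re}}\in H^2$ and an $L^2$ source, and applies the unweighted Lemma \ref{eq:H2est}, in whose proof the test function is a pure difference quotient and the pressure term vanishes by the divergence-free condition.

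The concrete issue in your Step 2 is the pressure. With the test function $-D_j^{-h}(|y|^2 D_j^h W_k)$ the pressure of the differentiated equation is $\partial_k q$, and mimicking Proposition \ref{propH2est} ``exactly'' produces $2\int D_j^h\partial_k q\; y\cdot D_j^h W_k\,dy$, which in that scheme is bounded by $\|\nabla\partial_k q\|_{L^2}\,\||y|D_j^hW_k\|_{L^2}$ -- but $\nabla^2 q\in L^2$ has not been established at this stage (Theorem \ref{thm:energyestkey} only gives $\nabla q\in L^p$). This is fixable: either integrate by parts once more in $y_k$, rewriting the term as $\int D_j^h q\,\big(D_j^h W_{k,k}+y\cdot\partial_k D_j^h W_k\big)\,dy\le C\|\nabla q\|_{L^2}\big(\|\nabla W_k\|_{L^2}+\||y|\nabla D_j^h W_k\|_{L^2}\big)$ and absorb the last factor into the dissipation; or first prove $\|\nabla^2 q\|_{L^2}\le C$ by Calder\'on--Zygmund estimates from $-\Delta q=\div\div\big((v_0+v_{\text{re}})\otimes(v_0+v_{\text{re}})\big)$ using the $H^2$ and decay bounds already available. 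Two smaller points: your sign bookkeeping for the drift--damping pair is off (with damping $-W_k$ the weighted contribution cancels to zero at leading order rather than producing $-\tfrac12\int|y|^2|D_j^hW_k|^2$), but your remedy is correct either way, since $\||y|D_j^h W_k\|_{L^2}\le \||y|\nabla^2 v_{\text{re}}\|_{L^2}+h\|\nabla^2 v_{\text{re}}\|_{L^2}\le C$ uniformly in $h$ by Propositions \ref{pro:H2est} and \ref{propH2est}; and when estimating interaction terms you should split the weight as $\||y|\nabla v_0\|_{L^\infty}\||y|D_j^hW_k\|_{L^2}$ rather than using $\||y|^2\nabla v_0\|_{L^\infty}$, which is infinite for $\beta<1$. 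With these repairs your route works; the paper's formulation simply sidesteps both the pressure and the bookkeeping issues.
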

		First, we give a lemma considering $H^2$-estimates for  the following general linear system, which is quite similar to Lemma \ref{eq:H1est}.
		\begin{lemma}[$H^2$-estimates]\label{eq:H2est}
			Let $f(y) \in L^{2}(\mathbb{R}^{2})$ and the divergence-free vector field $\overline{V}\in H^2(\mathbb{R}^2)$ be given. Assume that $(V, P)$ is a weak solution to the following system:
			\begin{equation*}
            \left\{
				\begin{aligned}
				  &  -\Delta V + \overline{V} \cdot \nabla V - \dfrac{1}{2}(y \cdot \nabla V + V) + \nabla P = f , \\
					&\div V = 0,
				\end{aligned}
                \right. \text{in } \mathbb{R}^{2}.
			\end{equation*}
			Specifically, $V \in H^1(\mathbb{R}^{2})$, $|y|V(y) \in L^{2}(\mathbb{R}^{2})$, $P \in L^{2}(\mathbb{R}^{2})$, and for all vector fields $\varphi \in H^{1}(\mathbb{R}^{2})$, the following identity holds:
			\begin{equation}\label{eq:weak1}
				\begin{aligned}
					&\int_{\mathbb{R}^{2}} \nabla V : \nabla \varphi \, dy - \frac{1}{2} \int_{\mathbb{R}^{2}} (y \cdot \nabla V + V) \cdot \varphi \, dy \\
					&= \int_{\mathbb{R}^{2}} P \operatorname{div} \varphi \, dy + \int_{\mathbb{R}^{2}} \overline{V}\cdot \nabla \varphi \cdot V \, dy + \int_{\mathbb{R}^{2}} f \cdot \varphi \, dy.
				\end{aligned} 
			\end{equation}
			Then, there exists a  positive constant $C$ such that
			\begin{equation*}
				\|V\|_{H^{2}(\mathbb{R}^{2})} \leq C \left( \| \overline{V}\|_{H^{2}(\mathbb{R}^{2})}, \| V\|_{H^{1}(\mathbb{R}^{2})} ,\|f\|_{L^{2}(\mathbb{R}^{2})} \right).
			\end{equation*}
		\end{lemma}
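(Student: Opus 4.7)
The plan is to mirror the proof of Lemma \ref{eq:H1est} via the method of difference quotients, with the only essential change occurring in the treatment of the convection term. Since the convection $\overline{V}\cdot\nabla V$ is now linear in $V$ with a prescribed coefficient $\overline{V}\in H^2(\mathbb{R}^2)$, we can replace the Gagliardo--Nirenberg interpolation used in the genuinely nonlinear case by the two-dimensional Sobolev embeddings $H^2(\mathbb{R}^2)\hookrightarrow L^\infty(\mathbb{R}^2)$ and $H^1(\mathbb{R}^2)\hookrightarrow L^4(\mathbb{R}^2)$, which directly encode the needed regularity of $\overline{V}$.

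Concretely, I would test \eqref{eq:weak1} against $\varphi = -D_k^{-h}(D_k^h V)$ for $k=1,2$. The viscous term yields $\|D_k^h\nabla V\|_{L^2}^2$; the drift-damping combination $-\frac{1}{2}(y\cdot\nabla V + V)$ produces a remainder bounded by $\|\nabla V\|_{L^2}^2$ after the integration-by-parts identity already computed in Lemma \ref{eq:H1est}; the pressure term vanishes by $\div V = 0$; and the source term $\int f\cdot\varphi$ is absorbed via the standard inequality $\|D_k^{-h}D_k^h V\|_{L^2}\leq C\|\nabla D_k^h V\|_{L^2}$ together with Young's inequality. For the convection term, applying the product rule $D_k^h(fg) = (D_k^h f)\,g + f(\cdot+h\mathbf{e}_k)(D_k^h g)$ together with $\int \psi\,D_k^{-h}g = -\int (D_k^h\psi)\,g$ gives the decomposition
\begin{equation*}
\int \overline{V}\cdot\nabla\varphi\cdot V\,dy = -\int (D_k^h \overline{V})\cdot V\cdot (D_k^h\nabla V)\,dy - \int \overline{V}(\cdot+h\mathbf{e}_k)\cdot (D_k^h V)\cdot (D_k^h\nabla V)\,dy.
\end{equation*}
The first piece is bounded by $\|D_k^h \overline{V}\|_{L^4}\|V\|_{L^4}\|D_k^h\nabla V\|_{L^2} \leq C\|\overline{V}\|_{H^2}\|V\|_{H^1}\|D_k^h\nabla V\|_{L^2}$, using $H^2\hookrightarrow W^{1,4}$ in 2D for $\overline{V}$ and $H^1\hookrightarrow L^4$ for $V$. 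The second is bounded by $\|\overline{V}\|_{L^\infty}\|D_k^h V\|_{L^2}\|D_k^h\nabla V\|_{L^2}\leq C\|\overline{V}\|_{H^2}\|\nabla V\|_{L^2}\|D_k^h\nabla V\|_{L^2}$, using $H^2\hookrightarrow L^\infty$ in 2D. Young's inequality absorbs both contributions into the viscous term.

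Collecting all terms yields
\begin{equation*}
\|D_k^h\nabla V\|_{L^2}^2 \leq C\bigl(1+\|\overline{V}\|_{H^2}^2\bigr)\|V\|_{H^1}^2 + C\|f\|_{L^2}^2,
\end{equation*}
and passing $h\to 0$ gives $\|\nabla^2 V\|_{L^2}\leq C\bigl(\|\overline{V}\|_{H^2},\|V\|_{H^1},\|f\|_{L^2}\bigr)$, which combined with the hypothesis $V\in H^1$ delivers the required $H^2$-bound. The argument is essentially mechanical, and the main (minor) obstacle is only the bookkeeping of the two terms produced by the difference-quotient product rule on the convection; the critical two-dimensional embeddings make each resulting estimate immediate.
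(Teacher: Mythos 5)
Your proposal is correct and follows essentially the same route as the paper: test with $\varphi=-D_k^{-h}(D_k^h V)$, treat the viscous, drift--damping, pressure and source terms exactly as in Lemma~\ref{eq:H1est}, and handle the convection by the difference-quotient product rule; the only variation is that the paper cancels the transported piece $\int \overline{V}(\cdot+h\mathbf{e}_k)\cdot\nabla D_k^h V\cdot D_k^h V\,dy$ exactly using $\div\overline{V}=0$ and estimates the remaining piece with $L^6\times L^3\times L^2$ H\"older, whereas you bound the former by $\|\overline{V}\|_{L^\infty}\le C\|\overline{V}\|_{H^2}$ and the latter by $L^4\times L^4\times L^2$, which is equally valid. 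One harmless slip: with $\varphi=-D_k^{-h}(D_k^h V)$ the two terms in your displayed decomposition should appear with plus signs, but since you estimate them in absolute value and absorb by Young's inequality, the conclusion is unaffected.
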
 
		\begin{proof}
        Similar to the proof of   Lemma \ref{eq:H1est}, we choose $\varphi=- D_{k}^{-h}D_{k}^{h}V$.  All the other terms are identical to those in
         Lemma \ref{eq:H1est}, 
			we only need to deal with the following term. Using $\div \overline{V}=0$, we have that
			\begin{equation*}
				\begin{aligned}
					-\int_{\mathbb{R}^{2}} \overline{V}\cdot \nabla D_{k}^{-h}(D_{k}^{h}V) \cdot V \, dy &= \int_{\mathbb{R}^{2}} \overline{V}(y + h\mathbf{e}_{k}) \cdot \nabla D_{k}^{h}V(y) \cdot D_{k}^{h}V(y) \, dy \\
					&\quad + \int_{\mathbb{R}^{2}} D_{k}^{h} \overline{V}(y) \cdot \nabla D_{k}^{h}V(y) \cdot V(y) \, dy \\
					&= \int_{\mathbb{R}^{2}} D_{k}^{h} \overline{V}(y) \cdot \nabla D_{k}^{h}V(y) \cdot V(y) \, dy\\
					&\leq C \|V\|_{L^{6}(\mathbb{R}^{2})} \|D_{k}^{h}\overline{V}\|_{L^{3}(\mathbb{R}^{2})} \|D_{k}^{h}\nabla V\|_{L^{2}(\mathbb{R}^{2})} \\
					&\leq \frac{1}{10}\|D_{k}^{h}\nabla V\|_{L^{2}(\mathbb{R}^{2})}^2  + C \|\overline{V}\|_{H^2(\mathbb{R}^{2})}^2\|V\|_{H^1(\mathbb{R}^{2})}^2.
				\end{aligned}
			\end{equation*}
            We finish the proof.
		\end{proof}
		
		We are now in a position to prove Proposition \ref{pro:H3}.
		\begin{proof}[Proof of Proposition \ref{pro:H3}]
			Let $\mathbb{P} = \textrm{Id} - \nabla (\Delta)^{-1}\div$ be  the Leray-Hopf projection onto the divergence-free
			vector fields.
			We know that $v_{\text{re}}$ solves
			\begin{equation}
				\begin{aligned}
					-\Delta v_{\text{re}} &= \frac{1}{2}y \cdot \nabla v_{\text{re}} + \frac{1}{2}v_{\text{re}} - \mathbb{P}  (v_{\text{re}} \cdot \nabla v_{\text{re}} + v_0 \cdot \nabla v_{\text{re}} + v_{\text{re}} \cdot \nabla v_0 + v_0 \cdot \nabla v_0) \\
					&=f.
				\end{aligned}		
			\end{equation}
			Thanks to Lemma \ref{lem:lineardecay}, Propositions \ref{pro:H2est} and \ref{propH2est}, it is easy to see that this $f\in H^1(\mathbb{R}^2)$. By the classical elliptic regularity theory and the fact that $v_{\textrm{re}}\in H^2(\mathbb{R}^2)$, we immediately obtain
			\begin{equation*}
				\|v_{\textrm{re}}\|_{ H^3(\mathbb{R}^2)} \le C.
			\end{equation*}
			Hence, we also have that 
			\begin{equation*}
				\|\nabla v_{\textrm{re}}\|_{ L^{\infty}(\mathbb{R}^2)} \le C.
			\end{equation*}
			Let  $V_k = \sqrt{1+|y|^2} \partial_{y_k} v_{\text{re}}$ and $q_k = \sqrt{1+|y|^2} \partial_{y_k} q$ for $k=1,2$, we can find that	
\begin{equation}\label{eq:neweq}
\begin{aligned}
    &-\Delta V_k + v_{\text{re}} \cdot \nabla V_k - \frac{1}{2} y \cdot \nabla V_k - \frac{1}{2} V_k + \nabla q_k \\
    =& -\sqrt{1+|y|^2} \partial_{y_k}(v_{\text{re}} \cdot \nabla v_0 + v_0 \cdot \nabla v_{\text{re}} + v_0 \cdot \nabla v_0) - V_k \cdot \nabla v_{\text{re}} \\
    & + v_{\text{re}} \cdot \frac{y}{\sqrt{1+|y|^2}} \partial_{y_k} v_{\text{re}} - \frac{1}{2} V_k - \frac{|y|^2}{2\sqrt{1+|y|^2}} \partial_{y_k} v_{\text{re}} \\
    & - 2 \frac{y}{\sqrt{1+|y|^2}} \cdot \nabla \partial_{y_k} v_{\text{re}} - \frac{2+|y|^2}{(1+|y|^2)^{3/2}} \partial_{y_k} v_{\text{re}} + \frac{y}{\sqrt{1+|y|^2}} \partial_{y_k} q.
\end{aligned}
\end{equation}

			We see that
			\begin{equation*}
				\sqrt{1+|y|^2} \partial_{y_k} (v_{\text{re}} \cdot \nabla v_0 + v_0 \cdot \nabla v_{\text{re}} + v_0 \cdot \nabla v_0) \in L^2(\mathbb{R}^2),
			\end{equation*}
			and also 
			\begin{equation*}
				\|V_k\|_{L^2(\mathbb{R}^2)} \leq \|\sqrt{1+|y|^2} \nabla v_{\text{re}}\|_{L^2(\mathbb{R}^2)},
			\end{equation*}			
			and		
			\begin{equation*}
				\left\| \frac{y}{\sqrt{1+|y|^2}} \cdot \nabla \partial_{y_k} v_{\text{re}} \right\|_{L^2(\mathbb{R}^2)} \leq C \|v_{\text{re}}\|_{H^2(\mathbb{R}^2)}.
			\end{equation*}
            The other terms on the right-hand side of \eqref{eq:neweq} can  be seen belong to $L^2(\mathbb{R}^2)$.
			Applying Lemma \ref{eq:H2est} to \eqref{eq:neweq}, we know that
			\begin{equation*}
				\|\sqrt{1+|y|^2}\nabla v_{\textrm{re}}\|_{H^2(\mathbb{R}^2)} \le C.
			\end{equation*}
			This finishes the proof of the proposition.
		\end{proof}

		\subsection{Pointwise Decay estimates for the self-similar solution}
		The main result of this section is Theorem \ref{thm:optimaldecay}, which asserts the optimal decay rates for the remainder term $v_{\textrm{re}}$. 
		Propositions \ref{propH2est} and \ref{pro:H3} implies the following obvious poinitwise estimate, which will be improved through a kind a bootstrap argument using the estimates for the Stokes system.
		\begin{theorem}[Pointwise decay estimates]\label{thm:pointdecay}
			Under the same assumption of Proposition \ref{eq:H1est}, we have 
			for all $y \in \mathbb{R}^2$ that
			\begin{equation*}
				|\nabla^k v_{\text{re}}(y)| \le\frac{C(A,\beta)}{1+|y|}, ~k =0,1.
			\end{equation*}
		\end{theorem}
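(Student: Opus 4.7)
The proof is a direct and essentially immediate consequence of the two weighted $H^2$ bounds already established, namely Propositions \ref{propH2est} and \ref{pro:H3}. My plan is to combine these estimates with the Sobolev embedding $H^2(\mathbb{R}^2) \hookrightarrow L^\infty(\mathbb{R}^2) \cap C^0(\mathbb{R}^2)$, which is available in dimension two because $2 > n/2 = 1$.

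Concretely, I would set $w(y) := \sqrt{1+|y|^2}\, v_{\text{re}}(y)$ and, for each component, $W_k(y) := \sqrt{1+|y|^2}\, \partial_{y_k} v_{\text{re}}(y)$. By Proposition \ref{propH2est} we have $\|w\|_{H^2(\mathbb{R}^2)} \le C(A,\beta)$, while Proposition \ref{pro:H3} gives $\|W_k\|_{H^2(\mathbb{R}^2)} \le C(A,\beta)$. Applying the Sobolev embedding then yields
\begin{equation*}
\bigl\|\sqrt{1+|y|^2}\, v_{\text{re}}\bigr\|_{L^\infty(\mathbb{R}^2)} + \bigl\|\sqrt{1+|y|^2}\, \nabla v_{\text{re}}\bigr\|_{L^\infty(\mathbb{R}^2)} \le C(A,\beta).
\end{equation*}

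To conclude, I invoke the elementary equivalence $\frac{1+|y|}{\sqrt{2}} \le \sqrt{1+|y|^2} \le 1+|y|$, which follows from $(|y|-1)^2 \ge 0$. Dividing the above bound by $\sqrt{1+|y|^2}$ therefore produces
\begin{equation*}
|v_{\text{re}}(y)| \le \frac{C(A,\beta)}{1+|y|}, \qquad |\nabla v_{\text{re}}(y)| \le \frac{C(A,\beta)}{1+|y|},
\end{equation*}
which is precisely the claim for $k=0,1$. Since the embedding constant is universal, the dependence of the final constant is only on $A$ and $\beta$ as stated. There is no substantive obstacle here, as all the heavy lifting was done in establishing the weighted $H^2$ estimates in the previous two propositions; the present theorem simply harvests them via Sobolev embedding.
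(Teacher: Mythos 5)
Your argument is correct and is essentially identical to the paper's own proof: both deduce the pointwise bounds by applying the Sobolev embedding $H^2(\mathbb{R}^2)\hookrightarrow L^\infty(\mathbb{R}^2)$ to the weighted quantities $\sqrt{1+|y|^2}\,v_{\text{re}}$ and $\sqrt{1+|y|^2}\,\nabla v_{\text{re}}$ bounded in Propositions \ref{propH2est} and \ref{pro:H3}, then use the equivalence $\sqrt{1+|y|^2}\sim 1+|y|$. No gaps.
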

		
		\begin{proof}
			From Proposition \ref{propH2est}, we have $\sqrt{1+|y|^2}v_{\text{re}} \in H^2(\mathbb{R}^2)$. By the Sobolev embedding $H^2(\mathbb{R}^2) \hookrightarrow L^\infty(\mathbb{R}^2)$, we have:
			\[
			\| \sqrt{1+|y|^2} v_{\text{re}} \|_{L^\infty} \le C \| \sqrt{1+|y|^2} v_{\text{re}} \|_{H^2} \le C.
			\]
			This implies 
			\begin{equation*}
				|v_{\text{re}}(y)| \le \frac{C}{1+|y|}.
			\end{equation*}
			Similarly, it follows from Proposition \ref{pro:H3} and Sobolev embedding that
			\begin{equation*}
				|\nabla v_{\text{re}}(y)| \le \frac{C}{1+|y|}.
			\end{equation*}
		\end{proof}
		
		We are going to improve the decay rate in Theorem \ref{thm:pointdecay} using the estimates for the Stokes system. 
		We first study a nonhomogeneous Stokes system with a singular
		force. 
		\begin{equation}	\label{eq:nonhomogeneous-Stokes}
			\left\{
			\begin{aligned}
				\partial_t \vartheta - \Delta \vartheta + \nabla \pi &= t^{-1}\div_x F\left(\frac{x}{\sqrt{t}}\right), \\
				\div \vartheta &= 0,
			\end{aligned}
			\right. \text{ in } \mathbb{R}^2 \times (0, +\infty).
		\end{equation}
		We have the  following lemma, which plays an important role in improving decay estimates for weak solutions of equation 	\eqref{eq:differencereca}. This kind of lemma already appeared in \cite[Lemma 4.1]{Jia14} in the 3D case; see also \cite[Lemma 3.15]{Yang26}. We present the lemma for the 2D case; the proof is essentially the same.
		\begin{lemma}\label{lem:nonhomogeneous-Stokes}
				If for all $x \in \mathbb{R}^2$, $F(x)$ satisfies $|F(x)| \leq\frac{C}{1+|x|^2}$, then the solution $\vartheta(x,t)$ to \eqref{eq:nonhomogeneous-Stokes} is given by
				\begin{equation}\label{eq:oseentensor}
					\vartheta(x,t) = \int_0^t   e^{(t-s)\Delta} \mathbb{P} \div_x s^{-1} F\left(\frac{\cdot}{\sqrt{s}}\right) ds.
				\end{equation}
				Let $\Theta(x) = \vartheta(x,1)$, then
				\begin{equation}
					|\Theta(x)| \leq \frac{C}{1+|x|^2}.
				\end{equation}
				Moreover, if $F \in C^{0,1}(\mathbb{R}^2)$ satisfies $\div_x F(\frac{x}{\sqrt{t}}) = t^{-1/2} f(\frac{x}{\sqrt{t}})$ with $|f(x)| \leq \frac{C}{1+|x|^2}$, 
				we have
				\begin{equation}
					|\nabla \Theta (x)| \leq \frac{C}{1+|x|^2}.
				\end{equation}
		\end{lemma}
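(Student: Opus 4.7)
The plan is to first verify that the formula (3.77) indeed solves (3.76) with zero initial data. This is the classical Duhamel principle applied to the Stokes semigroup $e^{t\Delta}\mathbb{P}$, which acts on divergence-free vector fields; the identification $\partial_t - \Delta + \nabla(\cdot)$ applied to the integrand against the Leray projection recovers the source, and divergence-free-ness of $\vartheta$ is preserved by $\mathbb{P}$.

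For the pointwise decay of $\Theta(x)=\vartheta(x,1)$, the key input is the standard bound for the kernel $K(x,t)$ of $e^{t\Delta}\mathbb{P}\,\mathrm{div}$ in $\mathbb{R}^2$, which satisfies $|K(x,t)|\leq C(\sqrt{t}+|x|)^{-3}$ (heat-kernel decay composed with an order-one singular integral from the Leray projection). Substituting this together with $|F(y)|\leq C(1+|y|^2)^{-1}$ into (3.77) and changing variable $y\mapsto\sqrt{s}\,y$ yields
\begin{equation*}
|\Theta(x)| \leq C\int_0^1 \int_{\mathbb{R}^2} \frac{1}{(\sqrt{1-s}+|x-y|)^3}\cdot\frac{1}{s+|y|^2}\, dy\,ds.
\end{equation*}
I would bound this integral, for $|x|\geq 2$, by splitting $y$-space into two regions. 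In $\{|y|\leq|x|/2\}$, the bound $|x-y|\geq|x|/2$ makes the kernel factor at most $C|x|^{-3}$, while the remaining integral of $(s+|y|^2)^{-1}$ over this region is bounded by $C\log(1+|x|^2/s)$, and integration in $s\in(0,1)$ produces only a logarithmic loss $C\log(1+|x|)/|x|^3$. In $\{|y|>|x|/2\}$, the bound $s+|y|^2\geq |x|^2/4$ controls the second factor by $C|x|^{-2}$, and the kernel integrates over $y$ to $C(1-s)^{-1/2}$ via the scaling $y\mapsto\sqrt{1-s}\,y$, so the $s$-integral yields $C|x|^{-2}$. The case $|x|\leq 2$ is a direct boundedness check. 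Combining the two regions gives $|\Theta(x)|\leq C(1+|x|^2)^{-1}$.

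For the gradient estimate, the assumption $\mathrm{div}_x F(x/\sqrt{t})=t^{-1/2}f(x/\sqrt{t})$ with $|f|\leq C(1+|y|^2)^{-1}$ lets us rewrite the source in (3.76) as $t^{-3/2}f(x/\sqrt{t})$, a pure scaled profile without divergence on $F$. Then
\begin{equation*}
\nabla\Theta(x) = \int_0^1 \nabla\bigl(e^{(1-s)\Delta}\mathbb{P}\bigr)\bigl[s^{-3/2}f(\cdot/\sqrt{s})\bigr](x)\,ds,
\end{equation*}
and the kernel of $\nabla e^{t\Delta}\mathbb{P}$ again carries decay $(\sqrt{t}+|x|)^{-3}$; the very same splitting argument, now with an extra factor $s^{-1/2}$ that remains integrable in $s$, yields $|\nabla\Theta(x)|\leq C(1+|x|^2)^{-1}$. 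The main obstacle is making the integral sharp rather than losing powers of $|x|$: one must balance the time-singularities of $1/(s+|y|^2)$ near $s=0$ and of the kernel near $s=1$ against the spatial decomposition so that the logarithmic contribution from the inner region is suppressed by the extra $|x|^{-3}$ factor, and the outer region cleanly produces the target $|x|^{-2}$ rate.
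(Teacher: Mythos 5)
Your proposal is correct and follows essentially the same route as the paper: Duhamel representation via the Oseen kernel of $e^{t\Delta}\mathbb{P}\operatorname{div}$, its pointwise decay $(\sqrt{t}+|x|)^{-3}$, and a space--time convolution estimate with a spatial decomposition relative to $|x|$, with the extra $s^{-1/2}$ from the rewritten source absorbed by the integrable factor $s^{-1/2}(1-s)^{-1/2}$. The only cosmetic differences are that you split into two regions where the paper uses three, and that you detail the bound for $\Theta$ and sketch $\nabla\Theta$ while the paper does the reverse; both reduce to the same integral estimates.
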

		\begin{proof}
			It is easy to check $\vartheta$ can be represented as \eqref{eq:oseentensor}. The kernel of $e^{t\Delta}\mathbb{P}$, usually called the Oseen kernel, which we denote by $S(x,t)$ is known to have the following estimates (\cite{BrandoleseJMPA07}):
			\begin{equation*}
				|\partial_t^l\nabla^k_x S(x,t)|\leq C(k,l) t^{-l} (|x|+\sqrt{t})^{-2-k}, ~ k, l\in \mathbb{N}^+.
			\end{equation*}
			Note that
			\begin{equation*}
				\begin{aligned}
					\Theta(x)= \vartheta(x,1) &= \int_0^1  e^{(1-s)\Delta} \mathbb{P} \div_x s^{-1} F\left(\frac{\cdot}{\sqrt{s}}\right) ds\\
					&= 	-\int_0^1 \int_{\mathbb{R}^2}\nabla S(x-y,1-s) s^{-1} F\left(\frac{y}{\sqrt{s}}\right) dy ds.
				\end{aligned}
			\end{equation*}
			Hence, based on the pointwise estimate of the Oseen kernel, we have
			\begin{equation}\label{eq:estTheta}
				\begin{aligned}
					|	\Theta(x)|\leq C \int_0^1 \int_{\mathbb{R}^2}\frac{1}{(\sqrt{1-s}+|x-y|)^3} \frac{1}{(\sqrt{s}+|y|)^2} dy ds.
				\end{aligned}
			\end{equation}
			If $F \in C^1(\mathbb{R}^2)$ satisfies $\div_x F(\frac{x}{\sqrt{t}}) = t^{-1/2} f(\frac{x}{\sqrt{t}})$ with $|f(x)| \leq C(1 + |x|)^{-2}$, 
			we have
			\begin{equation*}
				\begin{aligned}
					\Theta(x)= \vartheta(x,1) &= \int_0^1  e^{(1-s)\Delta} \mathbb{P} s^{-\frac{3}{2}} f\left(\frac{\cdot}{\sqrt{s}}\right) ds\\
					&= 	\int_0^1 \int_{\mathbb{R}^2} S(x-y,1-s)  s^{-\frac{3}{2}} f\left(\frac{y}{\sqrt{s}}\right)  dy ds.
				\end{aligned}
			\end{equation*}
			Then 
			\begin{equation*}
				\begin{aligned}
					\nabla \Theta(x)=  
						\int_0^1 \int_{\mathbb{R}^2} \nabla S(x-y,1-s)  s^{-\frac{3}{2}} f\left(\frac{y}{\sqrt{s}}\right) dy  ds.
				\end{aligned}
			\end{equation*}
			Therefore, we obtain, based on the pointwise estimate of the Oseen kernel, that 
			\begin{equation}\label{eq:estnablaThe}
				\begin{aligned}
					|\nabla \Theta(x)| & \leq 	\int_0^1 s^{-\frac{1}{2}} \int_{\mathbb{R}^2} \frac{1}{(\sqrt{1-s}+|x-y|)^3} \frac{1}{(\sqrt{s}+|y|)^2}dy ds.
				\end{aligned}
			\end{equation}
			We now prove that 
			\begin{equation*}
				|\nabla \Theta(x)| \leq \frac{C}{1+|x|^2},
			\end{equation*}
			by using \eqref{eq:estnablaThe}.
			The proof of $| \Theta(x)| \leq \frac{C}{1+|x|^2}$ using \eqref{eq:estTheta} is similar and indeed simpler and thus omitted.
			We have
			\begin{equation*}
				\begin{aligned}
					|\nabla \Theta(x)| & \leq 	\int_0^1 s^{-\frac{1}{2}} \int_{\mathbb{R}^2} \frac{1}{(\sqrt{1-s}+|x-y|)^3} \frac{1}{(\sqrt{s}+|y|)^2}dy ds\\
					& =  	\int_0^1 s^{-\frac{1}{2}} \int_{|y|\leq \frac{|x|}{2}}   
					+ \int_{\frac{|x|}{2}\leq |y|\leq 2|x|}  + 
					\int_{|y|\geq 2|x| }  \frac{1}{(\sqrt{1-s}+|x-y|)^3} \frac{1}{(\sqrt{s}+|y|)^2}dy ds\\
					&:=J_1+J_2+J_3. 
				\end{aligned}
			\end{equation*}
			We only consider $|x|\ge 10$, the boundedness when $|x|\leq 10$ is clear. Then 
			\begin{equation*}
				J_1 \leq \int_0^1 s^{-\frac{1}{2}} \int_{|y|\leq \frac{|x|}{2}}   \frac{8}{|x|^3} \frac{1}{(\sqrt{s}+|y|)^2} dy ds
				\leq C|x|^{-3} \ln(1+|x|).
			\end{equation*}
			\begin{equation*}
				\begin{aligned}
					J_2 &\leq \int_0^1 s^{-\frac{1}{2}} \int_{\frac{|x|}{2}\leq |y|\leq 2|x|} \frac{1}{(\sqrt{1-s}+|x-y|)^3}  \frac{1}{(\sqrt{s}+|y|)^2} dy ds
					\\
					&\leq C\frac{1}{|x|^2}\int_0^1 s^{-\frac{1}{2}} \int_{\frac{|x|}{2}\leq |y|\leq 2|x|} \frac{1}{(\sqrt{1-s}+|x-y|)^3}   dy ds
					\\
					&\leq C\frac{1}{|x|^2}\int_0^1 s^{-\frac{1}{2}} \int_{|z|\leq 3|x|} \frac{1}{(\sqrt{1-s}+|z|)^3}   dz ds\\
					&
					\leq C\frac{1}{|x|^2}\int_0^1 s^{-\frac{1}{2}}(1-s)^{-\frac{1}{2}} ds \leq C\frac{1}{|x|^2}.
				\end{aligned}
			\end{equation*}
			Similarly, one can prove
			\begin{equation*}
				\begin{aligned}
					J_3 &\leq \int_0^1 s^{-\frac{1}{2}} \int_{ |y|\geq 2|x|} \frac{1}{(\sqrt{1-s}+|x-y|)^3}  \frac{1}{(\sqrt{s}+|y|)^2} dy ds
					\\
					&\leq C\frac{1}{|x|^2}\int_0^1 s^{-\frac{1}{2}} \int_{|y|\geq 2|x|} \frac{1}{(\sqrt{1-s}+|y|)^3}   dy ds
					\\
					&
					\leq C\frac{1}{|x|^2}\int_0^1 s^{-\frac{1}{2}}(1-s)^{-\frac{1}{2}} ds \leq C\frac{1}{|x|^2}.
				\end{aligned}
			\end{equation*}
            We finish the proof.
		\end{proof}
		We now give the main estimates for this section.
		\begin{theorem}[Improved pointwise decay estimates]\label{thm:optimaldecay}
			Under the same assumption of Proposition \ref{prop:weighted_H1est}, we have 
			for all $x \in \mathbb{R}^2$ that 
			\begin{equation*}
				| v_{\text{re}}(x)| \le C(A,\beta)(1+|x|)^{-2},
			\end{equation*}
			and that
			\begin{equation*}
				|\nabla v_{\text{re}}(x)| \le C(A,\beta)(1+|x|)^{-(2+\beta)}.
			\end{equation*}
			Moreover, if the initial data $u_0\in C^2(S^1)$, we have the following optimal decay rate estimates
			\begin{equation*}
				| v_{\text{re}}(x)| \le C(\|u_0\|_{C^2(S^1)})(1+|x|)^{-3}\ln(1+|x|).
			\end{equation*}
		\end{theorem}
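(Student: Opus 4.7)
The plan is to realize $u_{\text{re}}(x,t) := u(x,t) - e^{t\Delta}u_0$ as the solution of the nonhomogeneous Stokes system
\begin{equation*}
\partial_t u_{\text{re}} - \Delta u_{\text{re}} + \nabla\pi = -\div(u\otimes u),\quad \div u_{\text{re}} = 0,
\end{equation*}
with zero initial data. By the self-similarity $u(x,t) = t^{-1/2}v(x/\sqrt{t})$, the right-hand side takes the form $t^{-1}\div_x F(x/\sqrt{t})$ with $F = -v\otimes v$, precisely matching the setup of Lemma \ref{lem:nonhomogeneous-Stokes}; evaluating at $t=1$ recovers $v_{\text{re}}(x) = \Theta(x)$. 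The strategy is to feed the pointwise bounds of Theorem \ref{thm:pointdecay} into Lemma \ref{lem:nonhomogeneous-Stokes} and then bootstrap twice.

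\textbf{Bound on $v_{\text{re}}$.} From Theorem \ref{thm:pointdecay} together with Lemma \ref{lem:lineardecay}, both $v_0$ and $v_{\text{re}}$ are controlled by $C(A,\beta)(1+|y|)^{-1}$, so $|F(y)| = |v(y)|^2 \leq C(A,\beta)(1+|y|)^{-2}$. The first conclusion of Lemma \ref{lem:nonhomogeneous-Stokes} then yields $|v_{\text{re}}(x)| \leq C(A,\beta)(1+|x|)^{-2}$ directly.

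\textbf{Bound on $\nabla v_{\text{re}}$, by bootstrap.} For the gradient we use the second part of Lemma \ref{lem:nonhomogeneous-Stokes}, which requires a pointwise bound on $f := \div F = -v\cdot\nabla v = -v_0\cdot\nabla v_0 - v_0\cdot\nabla v_{\text{re}} - v_{\text{re}}\cdot\nabla v_0 - v_{\text{re}}\cdot\nabla v_{\text{re}}$. With the bounds currently at hand, the slowest-decaying summand is $v_0\cdot\nabla v_{\text{re}} \leq C(1+|y|)^{-2}$, so Lemma \ref{lem:nonhomogeneous-Stokes} gives the preliminary estimate $|\nabla v_{\text{re}}(x)| \leq C(A,\beta)(1+|x|)^{-2}$. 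Reinserting this together with the improved $L^\infty$ bound from the previous step, the term $v_0\cdot\nabla v_{\text{re}}$ now decays like $(1+|y|)^{-3}$ and the other cross terms at least as fast, so the new bottleneck is $v_0\cdot\nabla v_0 \leq C(1+|y|)^{-2-\beta}$ and $|f(y)| \leq C(A,\beta)(1+|y|)^{-2-\beta}$. A variant of Lemma \ref{lem:nonhomogeneous-Stokes} with the decay exponent of $f$ promoted from $2$ to $2+\beta$---obtained by rerunning the three-region decomposition $J_1, J_2, J_3$ in the proof of that lemma and carrying the extra $(1+|y|)^{-\beta}$ factor through the inner integrals---then delivers $|\nabla v_{\text{re}}(x)| \leq C(A,\beta)(1+|x|)^{-2-\beta}$, as claimed.

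\textbf{Optimal decay when $u_0 \in C^2(S^1)$.} Here the improvement comes from going one step further in the Picard expansion. Write $u(x,t) = e^{t\Delta}u_0 + \tilde{U}(x,t) + w(x,t)$, where $\tilde{U}$ solves the Stokes equation with source $-\div((e^{t\Delta}u_0)\otimes(e^{t\Delta}u_0))$ and zero initial data, and $w$ captures all remaining contributions (which involve at least one factor of $u_{\text{re}}$ and, by Step 2, decay strictly faster). The $C^2$ regularity of $u_0$ brings $\tilde{U}$ within the scope of the fine asymptotic analysis of Brandolese \cite{Brandolese09}, from which one reads off at $t=1$ the bound $|\tilde{U}(x,1)| \leq C(\|u_0\|_{C^2(S^1)})(1+|x|)^{-3}\ln(1+|x|)$; the logarithmic factor is genuine and arises from the critical inner-region integral in the 2D Oseen-kernel convolution against the quadratic source $(e^{s\Delta}u_0)\otimes(e^{s\Delta}u_0)$, which is homogeneous of degree $-2$ under the natural self-similar rescaling. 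Combining the bounds on $\tilde{U}$ and $w$ gives the stated estimate.

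The two chief technical obstacles are (i) the variant of Lemma \ref{lem:nonhomogeneous-Stokes} with source decay exponent $2+\beta$ used in Step 2, which requires carefully reworking the three singular integrals in its proof; and (ii) verifying in Step 3 that the logarithmic factor is actually sharp---equivalently, that the asymptotic expansion in \cite{Brandolese09} produces a logarithmic contribution that cannot be absorbed into a faster-decaying term under generic self-similar $C^2$ data.
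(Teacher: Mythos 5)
Your proposal is correct and follows essentially the same route as the paper: write $v_{\text{re}}$ via the Duhamel/Oseen representation of Lemma \ref{lem:nonhomogeneous-Stokes}, insert the bounds of Lemma \ref{lem:lineardecay} and Theorem \ref{thm:pointdecay}, bootstrap the gradient estimate once the bottleneck becomes $v_0\cdot\nabla v_0\sim(1+|y|)^{-2-\beta}$ (the paper carries out your ``variant lemma'' explicitly as the integral \eqref{eq:decayofgradient}, where the extra decay shows up as an integrable factor $s^{(\beta-1)/2}$), and isolate the $v_0\otimes v_0$ source to invoke Brandolese's lemma for the logarithmic bound while estimating the faster-decaying remainder directly. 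Note only that the theorem claims just the upper bound, so your worry (ii) about sharpness of the logarithm is not needed for the proof.
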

		
		\begin{proof}
			Let $\vartheta(x,t)$ = $\frac{1}{\sqrt{t}}v_{\textrm{re}}\left(\frac{x}{\sqrt{t}}\right)$. Then this
			$\vartheta(x,t)$ solves
			\begin{equation}	\label{eq:nonhomogeneous-Stokes1}
				\left\{
				\begin{aligned}
					\partial_t \vartheta - \Delta \vartheta + \nabla \pi &= t^{-1}\div_x F\left(\frac{x}{\sqrt{t}}\right), \\
					\div \vartheta &= 0,
				\end{aligned}
				\right. \text{ in } \mathbb{R}^2 \times (0, +\infty).
			\end{equation}
			with 
			\begin{equation*}
				F= (v_{\textrm{re}} + v_0)\otimes(v_{\textrm{re}} + v_0).
			\end{equation*}
			It follows from Lemma \ref{lem:lineardecay} and Theorem \ref{thm:pointdecay} that
			\begin{equation*}
				|F|\leq \frac{C(A,\beta)}{1+|x|^2}.
			\end{equation*}
			Using Lemma \ref{lem:nonhomogeneous-Stokes}, we have
			\begin{equation*}
				|\vartheta(x,1)|= |v_{\textrm{re}}(x)|\leq \frac{C(A,\beta)}{1+|x|^2}.
			\end{equation*}
			Moreover, 
			\begin{equation*}
				t^{-1}\div_x F\left(\frac{x}{\sqrt{t}}\right) = t^{-\frac{3}{2}}f\left(\frac{x}{\sqrt{t}}\right),
			\end{equation*}
			where
			\begin{equation*}
				f= (v_{\textrm{re}} + v_0)\cdot \nabla (v_{\textrm{re}} + v_0).
			\end{equation*}
			It also follows from Lemma \ref{lem:lineardecay} and Theorem \ref{thm:pointdecay} that
			\begin{equation*}
				|f(x, t)|\leq \frac{C(A,\beta)}{1+|x|^2}.
			\end{equation*}
			By then, Lemma \ref{lem:nonhomogeneous-Stokes} gives
			\begin{equation*}
				|\nabla v_{\textrm{re}}(x)|\leq \frac{C(A,\beta)}{1+|x|^2}.
			\end{equation*}
            It follows from Lemma \ref{lem:lineardecay} and the above that
            \begin{equation*}
				|f(x, t)|\leq \frac{C(A,\beta)}{1+|x|^{2+\beta}}.
			\end{equation*}
			We then can improve the decay of  $\nabla v_{\textrm{re}}$ as 
			\begin{equation}\label{eq:decayofgradient}
				\begin{aligned}
					| \nabla v_{\textrm{re}}(x)| & \leq \left|\int_0^1 \int_{\mathbb{R}^2} \nabla S(x-y,1-s)  s^{-\frac{3}{2}} f\left(\frac{y}{\sqrt{s}}\right)  dy ds\right|\\
					&\leq C \int_0^1 \int_{\mathbb{R}^2}\frac{1}{(\sqrt{1-s}+|x-y|)^3} s^{\frac{\beta-1}{2}}\frac{1}{(\sqrt{s}+|y|)^{2+\beta}} dy ds\\
					& \leq \frac{C}{1+|x|^{2+\beta}}, ~ 0<\beta \leq 1.
				\end{aligned}						
			\end{equation}
			If the initial data has higher regularity with $\|u_0\|_{C^2(S^1)}<+\infty$, then
			we can apply \cite[Lemma 4]{Brandolese09} to the vector field $F=v_0\otimes v_0= e^{\Delta}u_0\otimes e^{\Delta}u_0$, to  conclude that
			\begin{equation*}
				\vartheta_1(x,t) = \int_0^t   e^{(t-s)\Delta} \mathbb{P} \div_x s^{-1} (e^{\Delta}u_0\otimes e^{\Delta}u_0)\left(\frac{\cdot}{\sqrt{s}}\right) ds
			\end{equation*}
			at time $t=1$	is bounded by $C(|u_0\|_{C^2(S^1)})(1+|x|)^{-3}\ln(1+|x|)$. The proof  makes essential use of the cancellation properties of the Oseen kernel.
			On the other hand, all the other terms in $(v_{\textrm{re}} + v_0)\otimes(v_{\textrm{re}} + v_0)$ except $v_0\otimes v_0$ has bound $C(1+|x|)^{-3}$ at least. And their contribution in \eqref{eq:nonhomogeneous-Stokes1} can be calculated similarly to that of Lemma \ref{lem:nonhomogeneous-Stokes} and bound by 
			\begin{equation*}
				\begin{aligned}
					C \int_0^1 \int_{\mathbb{R}^2}\frac{1}{(\sqrt{1-s}+|x-y|)^3} \frac{1}{(\sqrt{s}+|y|)^3} dy ds
					\leq \frac{C}{1+|x|^3}.
				\end{aligned}						
			\end{equation*}
            This finishes the proof of the theorem.
		\end{proof}

		\section{Proof of the main theorem}\label{sec:mainthm}
		The results in Sections \ref{sec:energy estimates} and \ref{sec:pointwise} implies the
	 following a priori estimates for the self-similar solutions on 	$\mathbb{R}^2$. 
		\begin{theorem}\label{thm:apriotest}
			Let divergence-free initial data $u_0$ on $\mathbb{R}^2$ be self-similar, 
			and that
			$  \int_{\partial S^1} u_0\cdot n d\sigma = 0 $, 
			\begin{equation*}
				A=\|u_0\|_{C^{0,\beta}(S^1)}<+\infty,
			\end{equation*}
			Assume 
			$u(x,t)$ is a self-similar solution to \eqref{eq:NS} in the sense of Definition \ref{def:energy persol}.
			Then $v(x) =  u(x,1)$
			the solution profile at time t = 1, belongs to $C^{\infty}(\mathbb{R}^2)$.
			Let $v_0 = e^{\Delta}u_0$ and $v_{\textrm{re}}(x) = v(x) -v_0 (x)$, assume $v_{\textrm{re}}\in X$, then we have
			\begin{equation*}
				\left|v_{\textrm{re}}(x)\right|	= \left|v(x)- v_0 (x)\right|\leq \frac{C(A,\beta)}{1+|x|^2}.
			\end{equation*}
			Also, 
			\begin{equation*}
				|\nabla v_{\text{re}}(x)| \le \frac{C(A,\beta)}{(1+|x|)^{2+\beta}}.
			\end{equation*}
		\end{theorem}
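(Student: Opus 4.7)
The proof is essentially an assembly of the results already established in Sections \ref{sec:energy estimates} and \ref{sec:pointwise}, since Theorem \ref{thm:apriotest} is the self-contained a priori statement that the Leray-Schauder argument in Section \ref{sec:mainthm} will feed on. The plan is to verify that the hypothesis $v_{\text{re}}\in X$ is enough to trigger each of these results in turn, and then simply read off the conclusions.

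First I would observe that because $v_{\text{re}}\in X$, it is $C^1$ with $(1+|y|^2)$-decay, so in particular $v_{\text{re}}\in H^1(\mathbb{R}^2)\cap L^p(\mathbb{R}^2)$ for every $p>1$, and the pressure $q$ obtained from $v_{\text{re}}$ via the Calder\'on--Zygmund argument used throughout Section \ref{sec:energy estimates} belongs to $L^p(\mathbb{R}^2)$. Hence $(v_{\text{re}},q)$ is a genuine solution of \eqref{eq:difference} in the functional class assumed in Theorem \ref{thm:energyestkey}. Invoking Theorem \ref{thm:energyestkey} gives both the uniform $H^1$-bound $\|v_{\text{re}}\|_{H^1(\mathbb{R}^2)}\le C(A,\beta)$ and the smoothness of $v_{\text{re}}$ obtained there by the standard Stokes bootstrap; combined with the smoothness of $v_0=e^{\Delta}u_0$ from Lemma \ref{lem:lineardecay}, this yields $v=v_0+v_{\text{re}}\in C^{\infty}(\mathbb{R}^2)$.

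Next I would feed the $H^1$-bound into the weighted estimates: Proposition \ref{prop:weighted_H1est} upgrades it to $\|\sqrt{1+|y|^2}\,v_{\text{re}}\|_{H^1}\le C(A,\beta)$, Proposition \ref{pro:H2est} to $\|v_{\text{re}}\|_{H^2}\le C(A,\beta)$, Proposition \ref{propH2est} to $\|\sqrt{1+|y|^2}\,v_{\text{re}}\|_{H^2}\le C(A,\beta)$, and finally Proposition \ref{pro:H3} to $\|v_{\text{re}}\|_{H^3}\le C(A,\beta)$ and $\|\sqrt{1+|y|^2}\,\nabla v_{\text{re}}\|_{H^2}\le C(A,\beta)$. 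Applying the Sobolev embedding $H^2(\mathbb{R}^2)\hookrightarrow L^{\infty}(\mathbb{R}^2)$ in the manner of Theorem \ref{thm:pointdecay} delivers the first-pass pointwise bound $|v_{\text{re}}(y)|+|\nabla v_{\text{re}}(y)|\le C(A,\beta)(1+|y|)^{-1}$.

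Finally I would invoke Theorem \ref{thm:optimaldecay} verbatim: with the first-pass bound and the Lemma \ref{lem:lineardecay} estimates for $v_0$ in hand, rewriting the equation for $v_{\text{re}}$ as the nonhomogeneous Stokes system \eqref{eq:nonhomogeneous-Stokes1} with $F=(v_{\text{re}}+v_0)\otimes(v_{\text{re}}+v_0)$ and applying Lemma \ref{lem:nonhomogeneous-Stokes} gives the improved decay $|v_{\text{re}}(x)|\le C(A,\beta)(1+|x|)^{-2}$; a second iteration using the improved pointwise bound for $v_{\text{re}}$ together with $|\nabla v_0|\le C(1+|y|)^{-1-\beta}$ inside the Oseen kernel convolution (as in \eqref{eq:decayofgradient}) yields $|\nabla v_{\text{re}}(x)|\le C(A,\beta)(1+|x|)^{-(2+\beta)}$. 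There is no genuine obstacle here, as all heavy lifting has already been done; the only thing to be careful about is to check that the intermediate estimate constants indeed depend only on $A$ and $\beta$ and not on $\|v_{\text{re}}\|_X$, so that the a priori character of the estimate (needed for the Leray-Schauder argument in Section \ref{sec:mainthm}) is preserved.
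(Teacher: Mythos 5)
Your proposal is correct and follows essentially the same route as the paper: Theorem \ref{thm:apriotest} is stated there as a direct consequence of Theorem \ref{thm:energyestkey}, the weighted estimates of Propositions \ref{prop:weighted_H1est}--\ref{pro:H3}, and Theorems \ref{thm:pointdecay} and \ref{thm:optimaldecay}, exactly as you assemble them. Your added remark that the constants must depend only on $A$ and $\beta$ (not on $\|v_{\text{re}}\|_X$) correctly identifies the point that makes the statement an a priori estimate usable in the Leray--Schauder argument.
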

           We also  recall that 
        \begin{equation}\label{eq:defofX}
				X=\{ \Bv:\ \div \Bv =0, \ \Bv\in C^1(\mathbb{R}^2), ~ \|\Bv\|_{X}<+\infty\},
			\end{equation}
			and the norm $\|\cdot\|_X$ is 
			\begin{equation*}\label{eq:defofnorm1}
				\|\Bv\|_{X}= \|(1+|x|^2)\Bv\|_{C(\mathbb{R}^2)} +\|(1+|x|^2)\nabla \Bv\|_{C(\mathbb{R}^2)}.
			\end{equation*}
            
		We  can now prove Theorem \ref{thm:main}.
		\begin{proof}
			We try apply the Leray-Schauder
			fixed point theorem.
			For each $\Bv \in X$, we define its self-similar extension by
			\begin{equation*}
				E\Bv (x,t) = t^{-1/2} \Bv(t^{-1/2} x), \quad (x,t) \in \mathbb{R}^3\times (0,\infty).
			\end{equation*}
			
			We now define an operator $K: X \times [0, 1] \to X$ by solving the  nonhomogeneous Stokes system and restricting the solution at time $t=1$:
			\begin{equation} 
				K(\Bv, \sigma) := -\Phi[(\sigma e^{t\Delta}u_0 + E\Bv) \otimes (\sigma e^{t\Delta}u_0 + E\Bv)] \big|_{t=1}.
			\end{equation}
			In the above, $\Phi$ is the solution operator of the nonhomogeneous Stokes system defined by formula \eqref{eq:oseentensor}, in which one takes
			\begin{equation*}
				\left. F= (\sigma e^{t\Delta}u_0 + E\Bv) \otimes (\sigma e^{t\Delta}u_0 + E\Bv)\right|_{t=1}.
			\end{equation*}

			Note that for $\Bv \in X$ with $\|\Bv\|_X < M$ and $0 \leq \sigma \leq 1$, the force 	$\left. F= (\sigma v_0 + E\Bv) \otimes (\sigma v_0 + E\Bv)\right|_{t=1}$ satisfies
			\begin{equation*}
				|F(x)|\leq \frac{C(M, \|v_0\|_X)}{1+|x|^2}\leq \frac{C(A,M)}{1+|x|^2}.
			\end{equation*}
			It follows from Lemma \ref{lem:nonhomogeneous-Stokes} that 
			\begin{equation*}
				|K(\Bv, \sigma) (x,1)|,~ |\nabla K(\Bv, \sigma) (x,1)| \leq \frac{C(A, M)}{1+|x|^2}\leq C(A,M)
			\end{equation*}
			hence, $K(\Bv, \sigma)\in X$ and $\| K(\Bv, \sigma) \|_X \leq C(A,M)$. 
			Thus $K$ indeed maps bounded sets in $X \times [0, 1]$ into bounded sets in $X$.
			
			Furthermore, $K$ is compact because its main term $\Phi[(\sigma e^{t\Delta}u_0 ) \otimes (\sigma e^{t\Delta}u_0 )] \big|_{t=1} = \sigma^2 \Phi[( e^{t\Delta}u_0 ) \otimes ( e^{t\Delta}u_0 )] \big|_{t=1}$ has a one-dimensional range. While the other terms of 
            $(\sigma e^{t\Delta}u_0 + E\Bv) \otimes (\sigma e^{t\Delta}u_0 + E\Bv)$ except $\sigma e^{t\Delta}u_0  \otimes \sigma e^{t\Delta}u_0$ decay with rate faster or equal to $\frac{1}{1+|x|^3}$. Similar to calculations in Lemma \ref{lem:nonhomogeneous-Stokes}, we know that 
            $\Phi[\sigma e^{t\Delta}u_0  \otimes  E\Bv + E\Bv\otimes (\sigma e^{t\Delta}u_0 + E\Bv)] \big|_{t=1}$ decay at infinity at least with the rate $\frac{1}{1+|x|^3}$. 
            Due to the same reason, we see that $\nabla \Phi[\sigma e^{t\Delta}u_0  \otimes  E\Bv + E\Bv\otimes (\sigma e^{t\Delta}u_0 + E\Bv)] \big|_{t=1}$ decay at infinity at least with the rate $\frac{1}{1+|x|^3}$. 
           And they possess at least $C^{1,\alpha}$ ($\alpha>0$) regularity in  any bounded region due to classical elliptic estimates for the linear Stokes operator. It is then not hard to see that $K$ is compact.
			We can also easily verify that
$K$ is continuous   based on similar  estimates to Lemma \ref{lem:nonhomogeneous-Stokes}.

			We now try to solve a fixed-point problem.
			\begin{equation}\label{eq:fixedpoint}
				\Bv = K(\Bv, \sigma) \quad \text{in } X
			\end{equation}
			that satisfies the following:
			\begin{itemize}
				\item[(1)] $K$ is continuous; $K(\cdot, \sigma)$ is compact for each $\sigma$.
				\item[(2)] It is uniquely solvable in $X$ for small $\sigma$ by classical result; see for example \cite{Cannone96} and \cite[Proposition 3]{Brandolese09}.
				\item[(3)] We have a priori estimates in $X$ for solutions $(\Bv, \sigma) \in X \times [0, 1]$ by Theorem \ref{thm:apriotest}.
			\end{itemize}
			
			By the Leray-Schauder theorem, there is a solution $v_{\textrm{re}} \in X$ of \eqref{eq:fixedpoint} with $\sigma = 1$. It follows that $ u_{\textrm{re}} =  E v_{\textrm{re}}$ satisfies the nonhomogeneous Stokes system \eqref{eq:nonhomogeneous-Stokes}  with $f = -(e^{t\Delta}u_0 + u_{\textrm{re}}) \otimes (e^{t\Delta}u_0 + u_{\textrm{re}})$, and hence $u = e^{t\Delta} u_0 + u_{\textrm{re}}$ is a self-similar  solution of \eqref{eq:NS} with initial data $u_0$.
			It is easy to check the solution we find satisfies the Definition \ref{def:energy persol}.
			Next, we prove the pointwise estimates in Theorem \ref{thm:main}. We have by Theorem \ref{prop:weighted_H1est} that
			\begin{equation*}
				|u(x,t)| = \frac{1}{\sqrt{t}}\left|v\left(\frac{x}{\sqrt{t}}\right)\right|\le C(A,\beta)\frac{1}{\sqrt{t}}\frac{1}{1+\frac{|x|}{\sqrt{t}}} =\frac{C(A,\beta)}{|x|+\sqrt{t}},
			\end{equation*}
			and
			\begin{equation*}
				|u(x,t)-e^{t\Delta}u_0| = \frac{1}{\sqrt{t}}\left|v_{\textrm{re}}\left(\frac{x}{\sqrt{t}}\right)\right|\le C(A,\beta)\frac{1}{\sqrt{t}}\frac{1}{1+\frac{|x|^2}{t}} =\frac{C(A,\beta)\sqrt{t}}{|x|^2+t},
			\end{equation*}
			and
			\begin{equation*}
				\begin{aligned}
					|\nabla u(x,t)- \nabla e^{t\Delta}u_0| &= \frac{1}{t}\left|\nabla v_{\textrm{re}}\left(\frac{x}{\sqrt{t}}\right)\right|\le C(A,\beta)\frac{1}{t}\frac{1}{1+\frac{|x|^{2+\beta}}{t^{\frac{2+\beta}{2}}}}\\
					&\leq \frac{C(A,\beta)t^{\frac{\beta}{2}}}{(|x|+\sqrt{t})^{2+\beta}}.
				\end{aligned}	
			\end{equation*}
			At last,
			if the initial data $u_0\in C^2(S^1)$, we have the following optimal decay rate estimates
			\begin{equation*}
				\begin{aligned}
					|\nabla u(x,t)- \nabla e^{t\Delta}u_0| &= \frac{1}{\sqrt{t}}\left|\nabla v_{\textrm{re}}\left(\frac{x}{\sqrt{t}}\right)\right|\\
					&\leq C(\|u_0\|_{C^2(S^1)})\frac{t}{(|x|+\sqrt{t})^3}\ln\left(1+\frac{|x|}{\sqrt{t}}\right).
				\end{aligned}	
			\end{equation*}
            Hence the proof of the theorem is complete.
		\end{proof}

	\appendix
    \section{Proof of Lemma \ref{lem:lineardecay}}\label{app:lineardecay}
In this appendix, we prove Lemma \ref{lem:lineardecay}. Before we proceed, we give the following proposition concerning the
  estimates for $(-1)$-homogeneous, locally H\"older continuous vector field in any dimension, which is needed in the proof of Lemma \ref{lem:lineardecay}.
\begin{proposition}\label{prop:Holderdecay}
Let $u_0(x)$ be a $(-1)$-homogeneous vector field in $\mathbb{R}^n \setminus \{0\}$ ($n\geq 2$) such that its restriction to the unit sphere satisfies $u_0 \in C^{0,\beta}(S^{n-1})$ with $0 < \beta < 1$. Let $A = \|u_0\|_{C^{0,\beta}(S^{n-1})}$. Then for any $y, \tilde{y} \in \mathbb{R}^n \setminus \{0\}$, the following estimate holds:
\begin{equation} \label{eq:u0_holder_global}
    |u_0(y) - u_0(\tilde{y})| \leq C(A) \frac{|y-\tilde{y}|^\beta}{\min( |y|,|\tilde{y}|)^{1+\beta}}.
\end{equation}
\end{proposition}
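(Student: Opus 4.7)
By homogeneity and symmetry we may assume $|y|\le|\tilde y|$, so that $\min(|y|,|\tilde y|)=|y|$. The proof splits naturally into a ``far'' regime and a ``near'' regime according to the ratio $|y-\tilde y|/|y|$.

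In the far regime $|y-\tilde y|\ge |y|$, I simply bound the difference by its modulus. Since $u_0$ is $(-1)$-homogeneous we have $|u_0(z)|=|z|^{-1}|u_0(z/|z|)|\le A/|z|$ for every $z\neq0$, so
\begin{equation*}
|u_0(y)-u_0(\tilde y)|\le \frac{A}{|y|}+\frac{A}{|\tilde y|}\le \frac{2A}{|y|}=\frac{2A\,|y|^{\beta}}{|y|^{1+\beta}}\le \frac{2A\,|y-\tilde y|^{\beta}}{|y|^{1+\beta}},
\end{equation*}
which is the desired bound in this regime.

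In the near regime $|y-\tilde y|<|y|$, I write $\omega=y/|y|$, $\tilde\omega=\tilde y/|\tilde y|$, and use homogeneity to decompose
\begin{equation*}
u_0(y)-u_0(\tilde y)=\frac{u_0(\omega)-u_0(\tilde\omega)}{|y|}+u_0(\tilde\omega)\Bigl(\frac{1}{|y|}-\frac{1}{|\tilde y|}\Bigr)=:I+II.
\end{equation*}
For $II$ I use $|u_0(\tilde\omega)|\le A$ and the elementary estimate $\bigl||y|^{-1}-|\tilde y|^{-1}\bigr|=\bigl||\tilde y|-|y|\bigr|/(|y||\tilde y|)\le |y-\tilde y|/|y|^2$; since in this regime $|y-\tilde y|^{1-\beta}\le |y|^{1-\beta}$, this yields $|II|\le A\,|y-\tilde y|^{\beta}/|y|^{1+\beta}$. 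For $I$, the key geometric step is to compare the angular difference to the full distance. Writing
\begin{equation*}
\omega-\tilde\omega=\frac{|\tilde y|\,y-|y|\,\tilde y}{|y||\tilde y|}=\frac{|\tilde y|\,(y-\tilde y)+(|\tilde y|-|y|)\,\tilde y}{|y||\tilde y|}
\end{equation*}
and applying the triangle inequality gives $|\omega-\tilde\omega|\le 2|y-\tilde y|/|y|$. Combined with the Hölder bound $|u_0(\omega)-u_0(\tilde\omega)|\le A\,|\omega-\tilde\omega|^{\beta}$ on $S^{n-1}$, this yields $|I|\le 2^{\beta}A\,|y-\tilde y|^{\beta}/|y|^{1+\beta}$. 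Summing the two contributions completes the proof.

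The argument is essentially a soft scaling computation; the only point worth highlighting is the sharp angular estimate $|\omega-\tilde\omega|\le 2|y-\tilde y|/\max(|y|,|\tilde y|)$, which, together with the dichotomy $|y-\tilde y|\lessgtr|y|$, produces both the correct power $|y-\tilde y|^{\beta}$ in the numerator and the correct power $|y|^{1+\beta}$ in the denominator. No dimension-specific input is used, so the same reasoning works for all $n\ge 2$.
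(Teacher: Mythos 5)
Your proof is correct and follows essentially the same route as the paper: the same angular/radial decomposition $u_0(y)-u_0(\tilde y)=\frac{u_0(\omega)-u_0(\tilde\omega)}{|y|}+u_0(\tilde\omega)\bigl(\frac{1}{|y|}-\frac{1}{|\tilde y|}\bigr)$, the same geometric bound on $|\omega-\tilde\omega|$, and the same $\beta$/$(1-\beta)$ interpolation of the radial difference, the only organizational difference being that you dispose of the regime $|y-\tilde y|\ge\min(|y|,|\tilde y|)$ up front via the sup bound $|u_0(z)|\le A/|z|$, whereas the paper runs its dichotomy only inside the radial term. One cosmetic remark: the decomposition you display for $\omega-\tilde\omega$ yields the denominator $\min(|y|,|\tilde y|)$ rather than $\max$ as claimed in your closing paragraph, but the weaker bound is exactly what you use and it suffices.
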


\begin{proof}
Let $y, \tilde{y} \in \mathbb{R}^n \setminus \{0\}$. We use spherical coordinates $y = \rho \theta$ and $\tilde{y} = \tilde{\rho} \tilde{\theta}$, where $\rho = |y|$, $ \tilde{\rho} = |\tilde{y}|$, and $\theta, \tilde{\theta} \in S^{n-1}$. By the $(-1)$-homogeneity of $u_0$, we have $u_0(y) = \rho^{-1}u_0(\theta)$ and $u_0(\tilde{y} ) = \tilde{\rho} ^{-1}u_0(\tilde{\theta})$. The difference is decomposed and estimated as
\begin{equation*}
    \begin{aligned}
        | u_0(y)-u_0(\tilde{y}) | &= \left| \frac{1}{\rho} u_0(\theta) -\frac{1}{\tilde{\rho} } u_0(\tilde{\theta})   \right| \\
        &= \left| \frac{1}{ \tilde{\rho}} (  u_0(\theta)-u_0(\tilde{\theta})) + \left(   \frac{1}{\rho} -\frac{1}{ \tilde{\rho}} \right) u_0(\theta) \right| \\
        &\leq \frac{1}{ \tilde{\rho}} |  u_0(\theta)-u_0(\tilde{\theta})| + |u_0(\theta)| \left|   \frac{1}{\rho} -\frac{1}{ \tilde{\rho}}\right| =:\textrm{I} + \textrm{II}.
    \end{aligned}
\end{equation*}

\textbf{Step 1: Estimating Term I.}
Since $u_0 \in C^\beta(S^{n-1})$, we have $ u_0(\theta)-|u_0(\tilde{\theta}) | \leq A |  \theta-\tilde{\theta}|^\beta$. Utilizing the geometric inequality 
\begin{equation*}
    \left| \theta -\tilde{\theta} \right| = \left|  \frac{y}{|y|}-\frac{\tilde{y}}{|\tilde{y}|}\right| \leq \frac{2|y-\tilde{y}|}{\max( |y|,|\tilde{y}|)},
\end{equation*}
we obtain
\begin{equation*}
    \frac{1}{\tilde{\rho} } |u_0(\tilde{\theta}) - u_0(\theta)| \leq \frac{A}{ \tilde{\rho}} \left( \frac{2|y-\tilde{y}|}{\max( \tilde{\rho}, \rho)} \right)^\beta \leq \frac{C(A)}{\min( \tilde{\rho}, \rho)^{1+\beta}} |y-\tilde{y}|^\beta.
\end{equation*}

\textbf{Step 2: Estimating Term II.}
Without loss of generality, assume $\rho \leq \tilde{\rho} $. We prove the term II is bounded by $C \rho^{-(1+\beta)}|y-\tilde{y} |^\beta$ by considering two cases for $| \rho-\tilde{\rho}|$.

\textit{Case A: Near-field radial difference ($| \rho-\tilde{\rho}| \leq \frac{\rho}{2}$).}
In this case, $ \tilde{\rho} \geq \rho$ and $| \rho-\tilde{\rho}| \leq |y-\tilde{y} |$. We have 
\begin{equation*}
    \left| \frac{1}{ \tilde{\rho}} - \frac{1}{\rho} \right| = \frac{| \rho-\tilde{\rho}|}{ \tilde{\rho}\rho} = \frac{| \rho-\tilde{\rho}|^\beta \cdot | \rho-\tilde{\rho}|^{1-\beta}}{\tilde{\rho} \rho} \leq \frac{|y-\tilde{y}|^\beta \cdot (\rho/2)^{1-\beta}}{\rho^2} = \frac{1}{2^{1-\beta}} \frac{|y-\tilde{y}|^\beta}{\rho^{1+\beta}}.
\end{equation*}

\textit{Case B: Far-field radial difference ($|\tilde{\rho} -\rho| > \frac{\rho}{2}$).}
In this case, $|y-\tilde{y}| \geq |\tilde{\rho} -\rho| > \frac{\rho}{2}$, which implies $1 < \frac{2^\beta |y-\tilde{y}|^\beta}{\rho^\beta}$. Since $\tilde{\rho}  \geq \rho$, we have
\begin{equation*}
    \left| \frac{1}{\tilde{\rho} } - \frac{1}{\rho} \right| \leq \frac{1}{\tilde{\rho} } + \frac{1}{\rho} \leq \frac{2}{\rho} < \frac{2}{\rho} \cdot \frac{2^\beta |y-\tilde{y}|^\beta}{\rho^\beta} = \frac{2^{1+\beta} |y-\tilde{y}|^\beta}{\rho^{1+\beta}}.
\end{equation*}

Combining both cases, we have $\textrm{ II} \leq \frac{C A}{\min(\tilde{\rho} , \rho)^{1+\beta}} |y-\tilde{y}|^\beta$.
Summing the estimates for terms I and  II yields \eqref{eq:u0_holder_global}. 
\end{proof}
We can now prove Lemma \ref{lem:lineardecay}.
    \begin{proof}[Proof of Lemma \ref{lem:lineardecay}]
		The smoothness of $v_0$ follows directly from the classical regularity theory for the heat equation.
		We now proceed to prove the estimate \eqref{eq:scale-invbou-2}.
		Throughout the proof, $C(A)$ ($C(A,\beta)$) denotes a constant depending only on $A$ ($A$ and $\beta$), which may vary from line to line. 
        We denote $G(x) = \frac{1}{4\pi} e^{-|x|^2/4}$ as the heat kernel at $t=1$ on $\mathbb{R}^2$ in this proof.
        Then
		\begin{equation*}
			v_0(y) = \int_{\mathbb{R}^2} G(y-z)u_0( z)dz,
		\end{equation*}
        and by our assumption $u_0( z) = \frac{1}{|z|}u_0\left( \frac{z}{|z|}\right) $ with $\|u_0\|_{C^{0,\beta}(S^1)}\leq A$.
        
\textbf{The case $k=0$.}        
		For the region $|y| \leq 1$, $v_0(y)$ is clearly bounded. Specifically, when $|y|\leq 1$,
		\begin{equation}\label{eq:boundednessinside}
			\begin{aligned}
				| v_0(y) |&\leq  \left(\int_{|z|\leq 2} + \int_{|z|\geq 2}  \right) e^{-\frac{| y-  z|^2}{4  }}\frac{C(A)}{|z|}dz\\
				& \leq \left(\int_{|z|\leq 2} \frac{C(A)}{|z|}dz + \int_{|z|\geq 2}  e^{-\frac{| y-  z|^2}{4  }}\frac{C(A)}{2}dz\right)\\
				&\le C(A) +C(A)\int_{|z|\geq 2}  e^{-\frac{|  z|^2}{16 }}dz\le C(A).
			\end{aligned}           
		\end{equation}
		For $|y|>1$,
        \begin{equation*}
			\begin{aligned}
				| v_0(y) |&\leq  \left(\int_{|z|\leq \frac{|y|}{2}} + \int_{ \frac{|y|}{2}\leq |z| \leq 2|y|} + \int_{|z|\geq 2|y|}  \right) e^{-\frac{| y-  z|^2}{4  }}\frac{C(A)}{|z|}dz\\
                &:=I_1+I_2+I_3.
			\end{aligned}           
		\end{equation*}
        When $|z|\leq \frac{|y|}{2}$, we know $|y-z|\geq \frac{|y|}{2}$, and then
        \begin{equation*}
			\begin{aligned}
            I_1\leq e^{-\frac{|y|^2}{16}} \int_{|z|\leq \frac{|y|}{2}} \frac{C(A)}{|z|}dz \leq C(A)e^{-\frac{|y|^2}{16}} |y|.
            \end{aligned}           
		\end{equation*}
        For $I_2$, 
        we first note that $\left\{\frac{|y|}{2}\leq |z| \leq 2|y|\right\}\subset \left\{z: |y-z| \leq 3|y|\right\}$. Hence,
        \begin{equation*}
			\begin{aligned}
            I_2 &\leq \int_{\left\{z: |y-z| \leq 3|y|\right\}} e^{-\frac{| y-  z|^2}{4  }}\frac{C(A)}{|y|} dz
             \leq \frac{C(A)}{|y|} \int_{\left\{ |\tilde{z}| \leq 3|y|\right\}} e^{-\frac{| \tilde{z}|^2}{4  }} dz \leq \frac{C(A)}{|y|}. 
            \end{aligned}           
		\end{equation*}
        For $I_3$, we note that when $|z|\geq 2|y|$, we have $|y|\leq \frac{|z|}{2}$, $|y-z|\geq \frac{|z|}{2}$ and also $|y-z|\geq |y|$. Therefore, we 
      can  estimate  $I_3$ as
        \begin{equation*}
			\begin{aligned}
            I_3 &\leq  \frac{C(A)}{|y|}   \int_{|z|\geq 2|y|} e^{-\frac{|y-z|^2}{8}} e^{-\frac{|y-z|^2}{8}} dz
            \leq
            \frac{C(A)}{|y|}  e^{-\frac{|y|^2}{8}} \int_{|z|\geq 2|y|} e^{-\frac{|z|^2}{32}} dz \leq \frac{C(A)}{|y|}  e^{-\frac{|y|^2}{8}}.
            \end{aligned}           
		\end{equation*}
        This shows that for $|y|>1$, we have
        \begin{equation}\label{eq:boundednessoutside}
            \begin{aligned}
             |v_0(y)|\leq \frac{C(A)}{|y|}.   
            \end{aligned}
        \end{equation}
        Combining \eqref{eq:boundednessinside} and \eqref{eq:boundednessoutside}, we obtain 
        \begin{equation*}
            |v_0(y)|\leq \frac{C(A)}{1+|y|}.   
        \end{equation*}
    This completes the proof of the \eqref{eq:scale-invbou-2} for $k=0$.     
  
\textbf{The case $k=1$.}   			
			Next, we prove \eqref{eq:scale-invbou-2} for $k=1$. The case for $\beta =0$ is trivial.
            We then consider the case when the component $\beta=1$.  We  have that
			\begin{equation*}
				\partial_i  v_0(y) = \frac{1}{4\pi}\int_{\mathbb{R}^2} e^{-\frac{| y-  z|^2}{4  }}\left(-\frac{1}{2}\right)(y_i-z_i)u_0( z)dz, ~ i=1,2.
			\end{equation*}
			We have
			\begin{equation*}
				\begin{aligned}
					|	\partial_i  v_0(y) |&\leq \int_{\mathbb{R}^2} e^{-\frac{| y-  z|^2}{4  }}|y_i-z_i|\frac{A}{|z|}dz	\leq \int_{\mathbb{R}^2} e^{-\frac{| y-  z|^2}{4  }}|y-z|\frac{A}{|z|}dz.
				\end{aligned}
			\end{equation*}
			Hence, when $|y|\leq 1$,
			\begin{equation*}
				\begin{aligned}
					|\partial_i  v_0(y) |&\leq  \left(\int_{|z|\leq 2} + \int_{|z|\geq 2}  \right) e^{-\frac{| y-  z|^2}{4  }}|y-z| \frac{C(A)}{|z|}dz\\
					& \leq \left(\int_{|z|\leq 2} \frac{C(A)}{|z|}dz + \int_{|z|\geq 2}  e^{-\frac{| y-  z|^2}{4  }}|y-z|C(A)dz\right)\\
					&\le C(A) +C(A)\int_{|z|\geq 2}  e^{-\frac{|  z|^2}{16 }}dz\le C(A).
				\end{aligned}
			\end{equation*}
			On the other hand, when $|y|\geq 1$, we rewrite
			\begin{equation*}
				\begin{aligned}
					\partial_i  v_0(y) &= \frac{1}{4\pi}\int_{|z|\leq \frac{1}{2}} + \int_{|z|\geq \frac{1}{2}}  e^{-\frac{| y-  z|^2}{4  }}\left(-\frac{1}{2}\right)(y_i-z_i)u_0( z)dz\\
					&= \frac{1}{4\pi}\int_{|z|\leq \frac{1}{2}}  e^{-\frac{| y-  z|^2}{4  }}\left(-\frac{1}{2}\right)(y_i-z_i)u_0( z)dz  +  \frac{1}{4\pi}\int_{|z|\geq \frac{1}{2}} e^{-\frac{| y-  z|^2}{4  }} \partial_i u_0( z)dz \\
					&\quad -\frac{1}{4\pi}\int_{\partial B_\frac{1}{2}} e^{-\frac{| y-  z|^2}{4  }}  u_0( z)\cdot n  d\sigma_z =J_1 + J_2 + J_3.
				\end{aligned}
			\end{equation*}
			Similar to estimate of $ v_0$ above, it is easy to find that for $|y|>1$.
			\begin{equation*}
				|J_1|\leq \int_{|z|\leq \frac{1}{2}}  e^{-\frac{| y-  z|^2}{4  }}|y-z|\frac{C(A)}{|z|}dz \leq  \int_{|z|\leq \frac{1}{2}}  e^{-\frac{| y|^2}{16}}\frac{|y|}{2}\frac{C(A)}{|z|}dz \leq \frac{C(A)}{1+|y|^2}.
			\end{equation*}
			\begin{equation*}
				|J_2|\leq \int_{|z|\geq \frac{1}{2}} e^{-\frac{| y-  z|^2}{4  }} \frac{C(A)}{|z|^2} dz \leq \frac{C(A)}{1+|y|^2}.
			\end{equation*}
			\begin{equation*}
				|J_3|\le C(A)\int_{\partial B_\frac{1}{2}} e^{-\frac{| y-  z|^2}{4  }}  d\sigma_z\leq \frac{C(A)}{1+|y|^2}.
			\end{equation*}
			We finish the proof of \eqref{eq:scale-invbou-2} for $k=1$ and $\beta=1$. 
			
			Next, we consider the case for $k=1$ and  $0 < \beta < 1$. The derivative $\partial_i v_0(y)$ can be expressed as
			\begin{equation*}
				\partial_i v_0(y) = \int_{\mathbb{R}^2} \partial_{y_i} G(y-z) u_0(z) \, dz,
			\end{equation*}
			where $G(x) = \frac{1}{4\pi} e^{-|x|^2/4}$ is the heat kernel at $t=1$. Since $\int_{\mathbb{R}^2} \partial_{y_i} G(y-z) \, dz = 0$, we can utilize the following identity to handle the singularity and the decay:
			\begin{equation} \label{eq:cancel_id}
				\partial_i v_0(y) = \int_{\mathbb{R}^2} \partial_{y_i} G(y-z) [u_0(z) - u_0(y)] \, dz.
			\end{equation}
            The proof of boundedness of $\partial_i v_0$ when $|y|\leq 1$ is similar to the above computations.
For $|y| > 1$, we divide the integration domain into three regions similar as above: $L_1 = \{ z: |z| \leq \frac{|y|}{2} \}$ and $L_2 = \{ z:  \frac{|y|}{2}\leq |z|\leq 2|y| \}$ and $L_3 = \{ z : |z| \geq 2|y| \}$.
			
	\textbf{Case 1: Region $L_1$.} In  region $L_1$, we revert to the original form of the integral using the identity \eqref{eq:cancel_id}:
			\begin{equation*}
				\int_{L_1} \partial_{y_i} G(y-z) u_0(z) \, dz - u_0(y) \int_{L_1} \partial_{y_i} G(y-z) \, dz =: M_1 + M_2.
			\end{equation*}
            For $M_1$, we have
            \begin{equation*}
                \begin{aligned}
                 |M_1|    \leq C(A) e^{-\frac{|y|^2}{16}} \int_{|z| \leq |y|/2} \frac{1}{|z|}\, dz \leq C(A) e^{-\frac{|y|^2}{16}} |y|.
                \end{aligned}
            \end{equation*}
			For $M_2$, since $|u_0(y)| \leq C|y|^{-1}$, 
            we have
			\begin{equation*}
				|M_2| \leq C(A) e^{-\frac{|y|^2}{16}}\frac{1}{|y|} \int_{|z| \leq |y|/2} \, dz \leq C(A) e^{-\frac{|y|^2}{16}} |y|.
			\end{equation*}
            \textbf{Case 2: Region $L_2$.} 
            It follows from Proposition \ref{prop:Holderdecay} in region $L_2$ that
			\begin{equation*}
				|u_0(z) - u_0(y)| \le C(A) \frac{|z-y|^\beta}{\min(|z|, |y|)^{1+\beta}} \leq \frac{C(A)}{|y|^{1+\beta}} |z-y|^\beta.
			\end{equation*}
            In this region, for $z \in L_2$, we have $|y-z|\leq  3|y|$. 
			We get  
			\begin{equation*}
				\begin{aligned}
					\left| \int_{L_2} \partial_{y_i} G(y-z) [u_0(z) - u_0(y)] \, dz \right| 
					&\leq \frac{C(A)}{|y|^{1+\beta}} \int_{|\tilde{z}| \leq 3|y|} |\partial_i G(\tilde{z}) ||\tilde{z}|^{\beta}  \, d\tilde{z} 
					\leq \frac{C(A,\beta)}{|y|^{1+\beta}}.
				\end{aligned}
			\end{equation*}
\textbf{Case 3: Region $L_3$.} The estimates in the region are similar to Region $L_1$. We conclude that
\begin{equation*}
				\left|\int_{L_3} \partial_{y_i} G(y-z) u_0(z) \, dz - u_0(y) \int_{L_3} \partial_{y_i} G(y-z) \, dz\right| \leq C(A) e^{-\frac{|y|^2}{16}}.
			\end{equation*}	
			Combining the estimates for $L_1$, $L_2$ and $L_3$, we obtain that for $\beta \in (0, 1)$,
			\begin{equation*}
				|\partial_i v_0(y)| \leq \frac{C(A,\beta)}{1+|y|^{1+\beta}}.
			\end{equation*}
			The proofs for $k \geq 2$ follow by applying similar  arguments
            to the higher-order derivatives of the heat kernel, and we finish the proof. 
\end{proof}

		{\bf Acknowledgement.}
		The research of Gui is supported by  NSFC Key Program (Grant No. 12531010),  University of Macau research grants CPG2024-00016-FST, CPG2025-00032-FST, CPG2026-00027-FST, SRG2023-00011-FST, MYRG-GRG2023-00139-FST-UMDF, UMDF Professorial Fellowship of Mathematics, Macao SAR FDCT 0003/2023/RIA1 and  Macao SAR FDCT 0024/2023/RIB1.   The research of  Xie is partially supported by  NSFC grants 12571238 and 12426203.

	\end{document}